\DeclareMathAlphabet{\mathpzc}{OT1}{pzc}{m}{it}
\def\eqdefa{\buildrel\hbox{\footnotesize def}\over =}
\newcommand{\ud}{\mathrm{d}}
\newcommand{\vv}{\mathbf{v}}
\newcommand{\aaa}{\mathbf{a}}
\newcommand{\xx}{\mathbf{x}}
\newcommand{\nn}{\mathbf{n}}
\newcommand{\hh}{\mathbf{h}}
\newcommand{\sss}{\mathbf{s}}
\newcommand{\A}{\mathbf{A}}
\newcommand{\WW}{\mathbf{W}}
\newcommand{\DD}{\mathbf{D}}
\newcommand{\II}{\mathbf{I}}
\newcommand{\CD}{\mathcal{D}}
\newcommand{\CE}{\mathcal{E}}
\newcommand{\CF}{\mathcal{F}}
\newcommand{\CG}{\mathcal{G}}
\newcommand{\CA}{\mathcal{A}}
\newcommand{\CH}{\mathcal{H}}
\newcommand{\CL}{\mathcal{L}}
\newcommand{\Ff}{\mathfrak{F}}
\newcommand{\ML}{\mathscr{L}}
\newcommand{\Fp}{\mathfrak{p}}
\newcommand{\Rmnum}[1]{\uppercase\expandafter{\romannumeral #1}}
\newtheorem{theorem}{Theorem}
\newtheorem{lemma}[theorem]{Lemma}
\numberwithin{theorem}{section}
\numberwithin{equation}{section}
\title{Global strong solutions to the frame hydrodynamics for biaxial nematic phases}
\author{Minjiang Feng\footnote{ School of Mathematics and Statistics, Guizhou University, Guiyang 550025, China (mjfengmath@163.com) },
Sirui Li\footnote{Corresponding author, School of Mathematics and Statistics, Guizhou University, Guiyang 550025, China (srli@gzu.edu.cn) }, 
Qi Zeng\footnote{ School of Mathematics and Statistics, Guizhou University, Guiyang 550025, China (qzengmath@163.com)}}
\date{}
\begin{document}
\maketitle
\begin{abstract}
In this article, we consider the frame hydrodynamics of biaxial nematic phases, a coupled system between the evolution of the orthonormal frame and the Navier--Stokes equation, which is derived from a molecular-theory-based dynamical tensor model about two second-order tensors. In  two and three dimensions, we establish global well-posedness of strong solutions to the Cauchy problem of frame hydrodynamics for small initial data. The key ingredient of the proof relies on estimates of nonlinear terms with rotational derivatives on $SO(3)$, together with the dissipative structure of the frame hydrodynamics.

\textbf{Keywords.} liquid crystals; biaxial nematic phase; frame hydrodynamics; well-posedness; global strong solutions

\textbf{Mathematics Subject Classification (2020).}\quad  35Q35, 35K55, 35A01, 76A15, 76D03
\end{abstract}


\section{Introduction}\label{introduction-section}

Liquid crystals exhibit local anisotropy, with physical properties between anisotropic solids and isotropic liquids. To most of us, the most familiar case is the nematic phase composed of rodlike molecules. For the uniaxial nematic phase, its local anisotropy can be described by a unit vector field $\nn\in\mathbb{S}^2$.  The hydrodynamics of the vector $\nn$ is the well-known Ericksen--Leslie model \cite{Eri1961,Les1968}. For the biaxial nematic phase, the orientational order needs to be characterized by an orthonormal frame field $\Fp\in SO(3)$, since its local anisotropy is no longer axisymmetric. The frame hydrodynamics of biaxial nematics consist of evolution equations for the orthonormal frame field coupled with the Navier--Stokes equation for the fluid velocity field.

The Ericksen--Leslie model, describing uniaxial nematic flows, has been widely studied (see \cite{LW2014,Ball2017,WZZ2021} and the references therein). We briefly review key analytical works on this model. The global existence of two-dimensional weak solutions \cite{LLW2010,HX2012,HLW2014,WW2014} and the uniqueness of such solutions \cite{LW2010,WWZ2016} were discussed, respectively. Under a special assumption of initial datum, the global well-posedness of three-dimensional weak solutions was studied \cite{LW2016}. The strong well-posedness of the general Ericksen--Leslie model was investigated for the whole space \cite{WZZ2013,WW2014} and for the bounded domain \cite{HNPS2016,Wang2024}, respectively. When inertial effects are considered, the uniaxial hydrodynamics exhibit the hyperbolic feature of the system. For the inertial Ericksen--Leslie model, the strong well-posedness \cite{JL2019,CW2020} and global regularity \cite{HJLZ2021} of the solutions for small initial data were analyzed, respectively. The almost global well-posedness for small and smooth initial data was established in \cite{HJZ2025}. For the Poiseuille flow of the inertial Ericksen--Leslie model, singularity formation and global weak solutions were also studied in \cite{CHL2020,CHX2024,HS2025}. On the other hand, the original Ericksen--Leslie model is opaque to molecular architectures, meaning that some hydrodynamic coefficients are difficult to determine. This deficiency can be addressed by establishing connections between the Ericksen--Leslie model and molecular models \cite{EZ2006,KD1983,WZZ2015-CPAM} or tensor models \cite{HLWZZ2015,LWZ2015,WZZ2015-SIAM,LW2020,LWZ2025-PYD}, respectively.

For biaxial hydrodynamics \cite{Saupe1981,Liu1981,GV1985,LLC1992}, their governing equations, which should be equivalent, can be derived directly from the symmetry of biaxial nematic phases. However, their coefficients remain phenomenological, with unclear mutual relationships. Recent works \cite{LX2023,LX2023-JDE} resolve this deficiency by deriving frame hydrodynamics from a molecular-theory-based tensor model involving two second-order tensors, via formal expansion \cite{LX2023} and rigorous biaxial limit \cite{LX2023-JDE}. The resulting frame model links coefficients to molecular parameters, inherits the energy dissipation of tensor models, and reduces to the Ericksen--Leslie model under uniaxial anisotropy.

Although biaxial hydrodynamics was proposed as early as the 1980s, analytical results remain scarce due to the inherent complexity of the model. For simplified versions of the biaxial model, several analytical results exist. The global well-posedness of two-dimensional weak solutions was established in \cite{LLW2020}, with solutions maintaining regularity except at a finite number of isolated singular times. The global solutions were studied for both incompressible \cite{GL2022,GL2025} and compressible \cite{FL2025} cases. For the compressible non-isothermal case, the global existence and large-time behavior of classical solutions to the three-dimensional exterior problem were obtained in \cite{LWZ2025}.

A good form of the model is crucial for well-posedness analysis of the solution. 
The form of the biaxial hydrodynamics presented in \cite{LX2023}, expressed by all the components of the orthonormal frame field, turns out to be convenient for analyses. Using this formulation, both local strong solutions in $\mathbb{R}^d(d=2,3)$ and global weak solutions in $\mathbb{R}^2$ have been investigated in \cite{LWX2022,LWX2024}.
To our knowledge, this is the first analytic study for the {\it full-form} biaxial hydrodynamics.

The main goal of this article is to prove the global well-posedness of strong solutions to the {\it full-form} biaxial hydrodynamics with small initial data. Our methods and results can be seen as an extension of the work initiated by Wang--Zhang--Zhang \cite{WZZ2013} for the global strong solution to the Ericksen--Leslie model. However, some new and essential difficulties appear due to the complex coupling of the system structure. The main difficulty lies in deriving the estimates of nonlinear terms with rotational derivatives on $SO(3)$ in the energy dissipative law. To facilitate higher-order energy estimates, it is necessary to rewrite the elasticity of biaxial nematic phases in an equivalent form. Furthermore, a key orthogonal decomposition, with respect to the tangential space  at a point on $SO(3)$, will be fully utilized to gain the desired dissipative estimates.

\subsection{Preliminaries}

Let us introduce some notations regarding tensors and orthonormal frames. The symbol $\otimes$ represents the tensor product. For any two $n$th-order tensors $U_1$ and $U_2$, the dot product $U_1\cdot U_2$ is defined as the sum of the products of their corresponding coordinates, i.e.,
\begin{align*}
    U_1\cdot U_2=(U_1)_{i_1\cdots i_n}(U_2)_{i_1\cdots i_n},\quad |U_1|^2=U_1\cdot U_1.
\end{align*}
Here, we adopt the Einstein summation convention for repeated indices.
We denote by 
\begin{align*}
    SO(3)\eqdefa\{\Lambda\in\mathbb{R}^{3\times3}|\Lambda^{\top}\Lambda=\II_3\}
\end{align*}
the set of noncommutative Lie groups of orthonormal frames, where $\II_3$ is the $3\times3$ identity tensor.
For any frame $\Fp=(\nn_1,\nn_2,\nn_3)\in SO(3)$, the tangential space of $SO(3)$ at a point $\Fp$ can be defined by $T_{\Fp}SO(3)$, which is spanned by a set of orthogonal bases:
\begin{align*}
  V_1=(0,\nn_3,-\nn_2),\quad V_2=(-\nn_3,0,\nn_1),\quad V_3=(\nn_2,-\nn_1,0).
\end{align*}
Then, the corresponding orthogonal complement space $(T_{\Fp}SO(3))^{\perp}$ is spanned by
\begin{align*}
  W_1=&(0,\nn_3,\nn_2),\quad W_2=(\nn_3,0,\nn_1),\quad W_3=(\nn_2,\nn_1,0),\\
  W_4=&(\nn_1,0,0),\quad
  W_5=(0,\nn_2,0),\quad
  W_6=(0,0,\nn_3).
\end{align*}

The tangent space $T_{\Fp}SO(3)$ is crucial for defining the rotational differential operator and the orthogonal decomposition. To begin with, the differential operators $\ML_k(k=1,2,3)$ on $T_{\Fp}SO(3)$ can be defined by taking the inner products of $V_k(k=1,2,3)$ and
$\partial/\partial\Fp=(\partial/\partial\nn_1,\partial/\partial\nn_2,\partial/\partial\nn_3)$, respectively,
\begin{align}\label{diff-ML1-3}
  \left\{
  \begin{aligned}
    &\ML_1\eqdefa V_1\cdot\frac{\partial}{\partial\Fp}=\nn_3\cdot\frac{\partial}{\partial\nn_2}-\nn_2\cdot\frac{\partial}{\partial\nn_3},\\
    &\ML_2\eqdefa V_2\cdot\frac{\partial}{\partial\Fp}=\nn_1\cdot\frac{\partial}{\partial\nn_3}-\nn_3\cdot\frac{\partial}{\partial\nn_1},\\
    &\ML_3\eqdefa V_3\cdot\frac{\partial}{\partial\Fp}=\nn_2\cdot\frac{\partial}{\partial\nn_1}-\nn_1\cdot\frac{\partial}{\partial\nn_2}.
  \end{aligned}
  \right.
\end{align}
Here, the differential operators $\ML_k(k=1,2,3)$ actually give the derivatives along the infinitesimal rotation about $\nn_k$. Their action on $\nn_i$ yields $\ML_k\nn_i=\epsilon^{ijk}\nn_j$, where $\epsilon^{ijk}$ is the Levi-Civita symbol.
These operators $\ML_k$ can also act on functionals by replacing
$\partial/\partial\Fp$ with the variational derivative $\delta/\delta\Fp$.

To estimate the higher-order derivative terms related to the operators $\ML_k(k=1,2,3)$, it is indispensable for the orthogonal decomposition on $T_{\Fp}SO(3)$. Specifically, for any two matrices $A,B\in\mathbb R^{3\times3}$, the orthogonal decomposition with respect to the inner product $A\cdot B$ on $T_{\Fp}SO(3)$ can be defined by
\begin{align}\label{decomposition}
A\cdot B=\sum_{k=1}^{3}\frac{1}{|V_{k}|^2}(A\cdot V_{k})(B\cdot V_k)+\sum_{k=1}^{6}\frac{1}{|W_{k}|^2}(A\cdot W_{k})(B\cdot W_{k}).     
\end{align}
Such an orthogonal decomposition (\ref{decomposition}) has played a key role in the preceding analytical works \cite{LWX2022,LWX2024}. Furthermore, for any $\Fp=(\nn_1,\nn_2,\nn_3)\in SO(3)$ and any first-order differential operators $\CD$, and for $\alpha,\beta=1,2,3$, the following simple properties hold:
\begin{align}\label{frame-differ-properties}
\left\{
\begin{aligned}
&\mathcal{D}\nn_1=(\mathcal{D}\nn_1\cdot\nn_2)\nn_2+(\mathcal{D}\nn_1\cdot\nn_3)\nn_3,\\
&\mathcal{D}\nn_2=(\mathcal{D}\nn_2\cdot\nn_1)\nn_1+(\mathcal{D}\nn_2\cdot\nn_3)\nn_3,\\
&\mathcal{D}\nn_3=(\mathcal{D}\nn_3\cdot\nn_2)\nn_2+(\mathcal{D}\nn_3\cdot\nn_1)\nn_1,\\
&\mathcal{D}\nn_{\alpha}\cdot\nn_{\beta}+\mathcal{D}\nn_{\beta}\cdot\nn_{\alpha}=\mathcal{D}(\nn_{\alpha}\cdot\nn_{\beta})=0,\\
&W_{\alpha}\cdot\mathcal{D}\Fp=0.
\end{aligned}
    \right.
\end{align}

\subsection{The full-form frame hydrodynamics}

 Similar to the Oseen--Frank elastic energy of uniaxial nematics, the form of the biaxial orientational elasticity can be determined by the mesoscopic symmetry of biaxial phases. Its local anisotropy is described by an orthonormal frame field $\Fp=(\nn_1,\nn_2,\nn_3)\in SO(3)$. The biaxial orientational elasticity takes the following form:
\begin{align}\label{elastic-energy}
\CF_{Bi}[\Fp]=\int_{\mathbb{R}^d}f_{Bi}(\Fp,\nabla\Fp)\ud\xx, 
\end{align}
where the elastic energy density $f_{Bi}$ is given by \cite{GV1984, SV1994, XZ2018}
\begin{align*}
f_{Bi}(\Fp,\nabla\Fp)=&\,\frac{1}{2}\Big(K_{1}(\nabla\cdot\nn_{1})^{2}+K_{2}(\nabla\cdot\nn_{2})^{2}+K_{3}(\nabla\cdot\nn_{3})^{2}\\
&\,+K_{4}(\nn_{1}\cdot\nabla\times\nn_{1})^{2}+K_{5}(\nn_{2}\cdot\nabla\times\nn_{2})^{2}+K_{6}(\nn_{3}\cdot\nabla\times\nn_{3})^{2}\\
&\,+K_{7}(\nn_{3}\cdot\nabla\times\nn_{1})^{2}+K_{8}(\nn_{1}\cdot\nabla\times\nn_{2})^{2}+K_{9}(\nn_{2}\cdot\nabla\times\nn_{3})^{2}\\
&\,+K_{10}(\nn_{2}\cdot\nabla\times\nn_{1})^2+K_{11}(\nn_{3}\cdot\nabla\times\nn_{2})^2+K_{12}(\nn_{1}\cdot\nabla\times\nn_{3})^2\\
&\,+\gamma_{1}\nabla\cdot[(\nn_{1}\cdot\nabla)\nn_{1}-(\nabla\cdot\nn_{1})\nn_{1}]+\gamma_{2}\nabla\cdot[(\nn_{2}\cdot\nabla)\nn_{2}-(\nabla\cdot\nn_{2})\nn_{2}]\\
&\,+\gamma_{3}\nabla\cdot[(\nn_{3}\cdot\nabla)\nn_{3}-(\nabla\cdot\nn_{3})\nn_{3}]\Big).
\end{align*}
Here, $f_{Bi}$ contains twelve bulk terms and three surface terms. 
The coefficients $K_i(i=1,\cdots,12)$ of bulk terms are all positive.
Since each surface term is a null Lagrangian, the coefficients $\gamma_i>0(i=1,2,3)$ can be chosen as needed later. While other equivalent forms of $f_{Bi}$ exist, the present formulation is indeed the most convenient for our subsequent estimates. Regarding the biaxial elasticity (\ref{elastic-energy}), the previous work \cite{LLW2020}
proved that its energy minimizers are smooth outside a closed set with vanishing one-dimensional Hausdorff measure.

To present the frame hydrodynamics, we introduce a local basis consisting of nine second-order tensors $\{\II_3, \sss_i, \aaa_j\}(i=1,\cdots,5; j=1,2,3)$, where five symmetric traceless tensors $\sss_i(i=1,\cdots,5)$ are given by
\begin{align*}
    &\sss_1=\nn_1\otimes\nn_1-\frac13\II_3,\quad \sss_2=\nn_2\otimes\nn_2-\nn_3\otimes\nn_3,\\
    &\sss_3=\frac{1}{2}(\nn_1\otimes\nn_2+\nn_2\otimes\nn_1),\quad \sss_4=\frac{1}{2}(\nn_1\otimes\nn_3+\nn_3\otimes\nn_1),\quad \sss_5=\frac{1}{2}(\nn_2\otimes\nn_3+\nn_3\otimes\nn_2), 
\end{align*}
and three asymmetric traceless tensors $\aaa_j(j=1,2,3)$ are given by
\begin{align*}
    \aaa_1=\nn_1\otimes\nn_2-\nn_2\otimes\nn_1,\quad \aaa_2=\nn_3\otimes\nn_1-\nn_1\otimes\nn_3,\quad \aaa_3=\nn_2\otimes\nn_3-\nn_3\otimes\nn_2.
\end{align*}

The frame hydrodynamics is a system coupling the evolution describing the motion of the orthonormal frame field $\Fp=\big(\nn_1(\xx,t),\nn_2(\xx,t),\nn_3(\xx,t)\big)\in SO(3)$ with the Navier--Stokes equation for the fluid velocity field $\vv=\vv(\xx,t)$. As noted in the introduction, the biaxial hydrodynamics has various equivalent forms. In this article, for analytical convenience, we adopt the following {\it full-form} frame hydrodynamics established in \cite{LX2023}:
\begin{align}
&\,\chi_1\dot{\nn}_2\cdot\nn_3-\frac{1}{2}\chi_1\WW\cdot\aaa_3-\eta_1\DD\cdot\sss_5+\ML_1\CF_{Bi}=0,\label{frame-equation-n1}\\
&\,\chi_2\dot{\nn}_3\cdot\nn_1-\frac{1}{2}\chi_2\WW\cdot\aaa_2-\eta_2\DD\cdot\sss_4+\ML_2\CF_{Bi}=0,\label{frame-equation-n2}\\
&\,\chi_3\dot{\nn}_1\cdot\nn_2-\frac{1}{2}\chi_3\WW\cdot\aaa_1-\eta_3\DD\cdot\sss_3+\ML_3\CF_{Bi}=0,\label{frame-equation-n3}\\
&\,\Fp=(\nn_1,\nn_2,\nn_3)\in SO(3),\label{frame-SO3}\\
&\,\dot{\vv}=-\nabla p+\eta\Delta\vv+\nabla\cdot\sigma+\Ff,\label{yuan-frame-equation-v}\\
&\,\nabla\cdot\vv=0,\label{yuan-imcompressible-v}
\end{align}
where the dot derivative $\dot{f}$ denotes the material derivative $(\partial_t+\vv\cdot\nabla)f$. In the equations (\ref{frame-equation-n1})--(\ref{frame-equation-n3}), $\ML_k\CF_{Bi} (k=1,2,3)$ stand for the variational derivatives along the infinitesimal rotation round $\nn_k (k=1,2,3)$. The following notations
\begin{align*}
\DD=\frac{1}{2}(\nabla\vv+(\nabla\vv)^T),\quad\WW=\frac{1}{2}(\nabla\vv-(\nabla\vv)^T)
\end{align*}
 represent the rate of strain tensor, skew-symmetric part of the strain rate, respectively. The condition (\ref{frame-SO3}) is the constraint about the frame $\Fp$. 

In the equation (\ref{yuan-frame-equation-v}), the pressure $p$ ensures the incompressiblity (\ref{yuan-imcompressible-v}), and $\eta$ is the viscous coefficient. The divergence of the viscous stress $\sigma$ is defined by $(\nabla\cdot\sigma)_i=\partial_j\sigma_{ij}$. The viscous stress $\sigma=\sigma(\Fp,\vv)$ is given by
\begin{align}
\sigma(\Fp,\vv)=&\,\beta_1(\DD\cdot\sss_1)\sss_1+\beta_0(\DD\cdot\sss_2)\sss_1+\beta_0(\DD\cdot\sss_1)\sss_2+\beta_2(\DD\cdot\sss_2)\sss_2\nonumber\\
&\,+\beta_3(\DD\cdot\sss_3)\sss_3-\eta_3\Big(\dot\nn_1\cdot\nn_2-\frac{1}{2}  \WW\cdot\aaa_1\Big)\sss_3\nonumber\\
&\,+\beta_4(\DD\cdot\sss_4)\sss_4-\eta_2\Big(\dot\nn_3\cdot\nn_1-\frac{1}{2}  \WW\cdot\aaa_2\Big)\sss_4\nonumber\\
&\,+\beta_5(\DD\cdot\sss_5)\sss_5-\eta_1\Big(\dot\nn_2\cdot\nn_3-\frac{1}{2}  \WW\cdot\aaa_3\Big)\sss_5\nonumber\\
&\,+\frac{1}{2}\eta_3(\DD\cdot\sss_3)\aaa_1-\frac{1}{2}\chi_3\Big(\dot\nn_1\cdot\nn_2-\frac{1}{2}  \WW\cdot\aaa_1\Big)\aaa_1\nonumber\\
&\,+\frac{1}{2}\eta_2(\DD\cdot\sss_4)\aaa_2-\frac{1}{2}\chi_2\Big(\dot\nn_3\cdot\nn_1-\frac{1}{2}  \WW\cdot\aaa_2\Big)\aaa_2\nonumber\\
&\,+\frac{1}{2}\eta_1(\DD\cdot\sss_5)\aaa_3-\frac{1}{2}\chi_1\Big(\dot\nn_2\cdot\nn_3-\frac{1}{2}  \WW\cdot\aaa_3\Big)\aaa_3,\label{sigma}
\end{align}
where the viscous coefficients in (\ref{sigma}), expressed as functions of molecular parameters, satisfy the following nonnegative definiteness conditions (see \cite{LX2023} for details):
\begin{align}\label{coefficient-conditions}
\left\{\begin{aligned}
&\beta_i\geq0,~i=1,\cdots,5,\quad \chi_j>0,~j=1,2,3,\quad \eta>0,\vspace{1ex}\\
&\beta^2_0\leq\beta_1\beta_2,~~\eta^2_1\leq\beta_5\chi_1,~~\eta^2_2\leq\beta_4\chi_2,~~\eta^2_3\leq\beta_3\chi_3.
\end{aligned}\right.
\end{align}
The relations in (\ref{coefficient-conditions}) ensure that the system \eqref{frame-equation-n1}--\eqref{yuan-imcompressible-v} fulfills an energy dissipation law (see Proposition $\ref{Energy dissipative law}$).
The body force $\Ff$ is defined by 
\begin{align}\label{body force}
\Ff_i=\partial_i\nn_1\cdot\nn_2\ML_3\CF_{Bi}+\partial_i\nn_3\cdot\nn_1\ML_2\CF_{Bi}+\partial_i\nn_2\cdot\nn_3\ML_1\CF_{Bi}.
\end{align}

\subsection{The main result}

In this subsection, we will state the main result of this article. To begin with, for any given $\Fp=(\nn_1,\nn_2,\nn_3)\in SO(3)$, we have
\begin{align*}
    |\nabla^k\Fp|^2=\sum^3_{i=1}|\nabla^k\nn_i|^2,~k\geq0.
\end{align*}
Next, we introduce the local well-posedness  and the blow-up criterion of strong solutions to the frame system \eqref{frame-equation-n1}--\eqref{yuan-imcompressible-v}, which have been studied in the previous work \cite{LWX2022}.

\begin{theorem}[see \cite{LWX2022}]\label{wcc}
Let $s\ge2$ be an integer. Suppose that $(\nabla\Fp^{(0)},\vv^{(0)})\in H^{2s}(\mathbb{R}^{d})\times H^{2s}(\mathbb{R}^{d})(d=2 ~or~ 3)$ is the given initial data satisfying $\nabla\cdot\vv^{(0)}=0$ and $\Fp^{(0)}=(\nn_{1}^{(0)},\nn_{2}^{(0)},\nn_{3}^{(0)})\in SO(3)$. Then, there exists $T>0$ and a unique solution $(\Fp,\vv)$ to the frame system \eqref{frame-equation-n1}--\eqref{yuan-imcompressible-v} such that 
\begin{align*}
  \nabla\Fp\in C([0,T];H^{2s}(\mathbb{R}^{d})),\quad\vv\in C([0,T];H^{2s}(\mathbb{R}^{d}))\cap L^2([0,T];H^{2s+1}(\mathbb{R}^{d})).  
\end{align*}
Let $T^{*}$ be the maximal existence time of the solution. If $T^{*}<+\infty$, then it is necessary that 
\begin{align*}
   \int_{0}^{T^{*}}(\|\nabla\times\vv(t)\|_{L^{\infty}}+\|\nabla\Fp\|^{2}_{L^{\infty}})=+\infty,\quad   \|\nabla\Fp\|^{2}_{L^{\infty}} =\sum_{i=1}^{3}\|\nabla\nn_{i}\|^{2}_{L^{\infty}}. 
\end{align*}
\end{theorem}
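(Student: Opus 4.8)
To prove Theorem~\ref{wcc} I would recast the system as a coupled parabolic problem for $(\Fp,\vv)$, establish closed $H^{2s}$ energy estimates, construct the solution by a regularization (or iteration) scheme, and then sharpen the energy estimates into the stated continuation criterion. \emph{Step 1: reformulation.} Since $\CF_{Bi}$ is quadratic in $\nabla\Fp$, each variational derivative $\ML_k\CF_{Bi}$ is second order in $\Fp$; choosing the surface coefficients $\gamma_i$ conveniently (they are free, the surface terms being null Lagrangians) makes its principal part a strictly elliptic second-order operator, by positivity of the $K_i$. Solving \eqref{frame-equation-n1}--\eqref{frame-equation-n3} for the angular velocities $\dot\nn_2\cdot\nn_3,\ \dot\nn_3\cdot\nn_1,\ \dot\nn_1\cdot\nn_2$ and reconstructing the full time derivative by \eqref{frame-differ-properties} (recall $\dot\nn_\alpha\cdot\nn_\alpha=0$) turns the frame subsystem into a quasilinear parabolic equation that can be written $\dot\Fp=\Fp\,\Omega$ with a skew-symmetric $\Omega=\Omega(\Fp,\nabla^2\Fp,\nabla\vv)$. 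Substituting the solved $\dot\nn_i\cdot\nn_j$ into the stress \eqref{sigma} closes the velocity equation; after applying the Leray projection onto divergence-free fields, the conditions \eqref{coefficient-conditions} guarantee that the effective viscous bilinear form in $(\DD,\ \text{angular velocity})$ is coercive, so \eqref{yuan-frame-equation-v} is parabolic with principal part $\eta\Delta\vv$, the reactive coupling through $\Ff$ and the $\ML_k\CF_{Bi}$-terms in $\sigma$ being of strictly lower order than the dissipation.

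\emph{Step 2: a priori estimates.} At the base level, the energy dissipation law (Proposition~\ref{Energy dissipative law}) gives monotonicity of $\tfrac12\|\vv\|_{L^2}^2+\CF_{Bi}[\Fp]$ together with a positive-definite dissipation in $\nabla\vv$ and the angular velocities, by \eqref{coefficient-conditions}. For the higher-order bound one applies $\nabla^{2s}$ to the reformulated system, tests against the multipliers that render the elastic and viscous contributions positive, and controls commutators by Kato--Ponce/Moser estimates and the embedding $H^{2s}\hookrightarrow W^{1,\infty}$ (valid since $2s\ge4$ and $d\le3$). The genuinely new difficulty is the family of nonlinear terms carrying the rotational derivatives $\ML_k\CF_{Bi}$; here one first rewrites the biaxial elasticity in the equivalent frame-adapted form and then invokes the orthogonal decomposition \eqref{decomposition} on $T_{\Fp}SO(3)$ to split each such term into a piece absorbed by the parabolic dissipation and a remainder controlled by a polynomial $P$ of the energy. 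The outcome is a differential inequality
\[
\frac{\ud}{\ud t}\big(\|\nabla\Fp\|_{H^{2s}}^2+\|\vv\|_{H^{2s}}^2\big)+c\big(\|\nabla^2\Fp\|_{H^{2s}}^2+\|\nabla\vv\|_{H^{2s}}^2\big)\le C\,P\big(\|\nabla\Fp\|_{H^{2s}},\|\vv\|_{H^{2s}}\big),
\]
which produces a local existence time $T$ depending only on $\|\nabla\Fp^{(0)}\|_{H^{2s}}+\|\vv^{(0)}\|_{H^{2s}}$.

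\emph{Step 3: construction, constraint, uniqueness.} Solutions are built either by a Picard iteration (linearizing the quasilinear and reactive terms around the previous iterate, the bounds of Step~2 giving uniform control and contraction in a lower norm) or by adding a vanishing artificial viscosity and passing to the limit with Aubin--Lions compactness on the nonlinear terms. Preservation of $\Fp\in SO(3)$ is automatic: from $\dot\Fp=\Fp\,\Omega$ with $\Omega=-\Omega^\top$ one gets $\partial_t(\Fp^\top\Fp)+\vv\cdot\nabla(\Fp^\top\Fp)=\Omega^\top(\Fp^\top\Fp)+(\Fp^\top\Fp)\Omega$, a linear transport--ODE whose unique solution with datum $\II_3$ is $\Fp^\top\Fp\equiv\II_3$. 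Uniqueness of the strong solution follows from an $L^2$ (or $H^1$) estimate on the difference of two solutions together with Gronwall, once more using $H^{2s}\hookrightarrow W^{1,\infty}$ to close.

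\emph{Step 4: blow-up criterion.} Suppose $T^\ast<+\infty$ but $M:=\int_0^{T^\ast}\big(\|\nabla\times\vv\|_{L^\infty}+\|\nabla\Fp\|_{L^\infty}^2\big)\,\ud t<+\infty$. Revisiting the top-order estimate, every term that is not absorbed by the dissipation is, after the decomposition \eqref{decomposition} and suitable integration by parts, bounded by $C\big(1+\|\nabla\times\vv\|_{L^\infty}+\|\nabla\Fp\|_{L^\infty}^2\big)\big(\|\nabla\Fp\|_{H^{2s}}^2+\|\vv\|_{H^{2s}}^2\big)$: the vorticity enters (to first power) through $\WW$, while the quadratic first-derivative contributions in $\ML_k\CF_{Bi}$ and $\Ff$ enter through $\|\nabla\Fp\|_{L^\infty}^2$. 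Gronwall then yields
\[
\|\nabla\Fp(t)\|_{H^{2s}}^2+\|\vv(t)\|_{H^{2s}}^2\le\big(\|\nabla\Fp^{(0)}\|_{H^{2s}}^2+\|\vv^{(0)}\|_{H^{2s}}^2\big)\exp\big(C(T^\ast+M)\big),\quad t\in[0,T^\ast),
\]
so $(\nabla\Fp,\vv)$ stays bounded in $H^{2s}$ up to $T^\ast$ and the local theory extends it beyond $T^\ast$, contradicting maximality. The main obstacle throughout is Step~2: controlling the rotational-derivative nonlinearities $\ML_k\CF_{Bi}$ in the higher-order estimates while respecting the $SO(3)$ constraint and the implicit $\dot\Fp$--$\DD$ coupling inside $\sigma$, which is precisely where the equivalent form of the elasticity and the orthogonal decomposition \eqref{decomposition} are indispensable.
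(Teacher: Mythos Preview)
The present paper does not prove Theorem~\ref{wcc}; it is quoted verbatim from \cite{LWX2022} and used as a black box on which the global result (Theorem~\ref{flz}) is built. There is therefore no ``paper's own proof'' of this statement to compare your proposal against.

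That said, your outline is a reasonable sketch of how such a local well-posedness and continuation result is typically obtained, and it is broadly in the spirit of \cite{LWX2022}: recast the frame equations in the form $\dot\Fp=\Fp\,\Omega$ with $\Omega$ skew (so the $SO(3)$ constraint is propagated), exploit the equivalent elasticity \eqref{new-elasitic-density} and the decomposition \eqref{decomposition} to extract the parabolic principal part, close $H^{2s}$ estimates with Kato--Ponce commutators, and derive the blow-up criterion by refining the right-hand side to $C(1+\|\nabla\times\vv\|_{L^\infty}+\|\nabla\Fp\|_{L^\infty}^2)$ times the energy. One point you should be more careful about: in Step~4 you assert that all remainder terms are linear in $\|\nabla\times\vv\|_{L^\infty}$ and quadratic in $\|\nabla\Fp\|_{L^\infty}$, but the stresses $\sigma$ and $\sigma^d$ also produce terms with the full symmetric gradient $\DD$ and with $\nabla^2\Fp$; getting down to the vorticity alone (rather than $\|\nabla\vv\|_{L^\infty}$) requires the cancellation/absorption mechanism specific to this system---the pairing of the $\DD\cdot\sss_k$ terms in $\sigma$ with the corresponding pieces of the frame equations, together with the dissipative structure \eqref{coefficient-conditions}---and is not automatic from generic commutator bounds. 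That is the genuinely delicate step in \cite{LWX2022}, and your sketch passes over it.
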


Based on Theorem \ref{wcc}, we will establish the global well-posedness of strong solutions to the frame system \eqref{frame-equation-n1}--\eqref{yuan-imcompressible-v} for small initial data.
\begin{theorem}\label{flz}
   Assume that the assumptions in Theorem \ref{wcc} are satisfied. There exists $\varepsilon_0>0$ such that if
    \begin{align*} \|\nabla\Fp^{(0)}\|_{H^{2s}}+\|\vv^{(0)}\|_{H^{2s}}\le \varepsilon_{0},  
    \end{align*}
    then the strong solution obtained by Theorem $\ref{wcc}$ is global in time.
\end{theorem}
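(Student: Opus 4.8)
The plan is to upgrade the local existence result of Theorem~\ref{wcc} to a global one by a standard continuation argument driven by a small-data energy estimate: I would define the energy functional
\begin{align*}
\mathcal{E}(t)=\|\nabla\Fp(t)\|_{H^{2s}}^2+\|\vv(t)\|_{H^{2s}}^2,
\end{align*}
together with its dissipation
\begin{align*}
\mathcal{D}(t)=\sum_{k=1}^{3}\|\ML_k\CF_{Bi}\|_{H^{2s}}^2+\|\nabla\vv\|_{H^{2s}}^2
\end{align*}
(with the exact form of $\mathcal{D}$ dictated by the dissipative structure in Proposition~\ref{Energy dissipative law}), and aim to prove an a priori inequality of the form $\frac{\ud}{\ud t}\mathcal{E}(t)+c_0\,\mathcal{D}(t)\le C\,\mathcal{E}(t)^{1/2}\mathcal{D}(t)$ for some constants $c_0,C>0$ depending only on the coefficients. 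Granting this, if $\mathcal{E}(0)\le\ve_0^2$ with $\ve_0$ chosen so that $C\ve_0<c_0/2$, then a bootstrap shows $\mathcal{E}(t)\le\mathcal{E}(0)$ and $\int_0^t\mathcal{D}\le C'\mathcal{E}(0)$ for all $t$ in the existence interval; in particular $\int_0^{T^*}(\|\nabla\times\vv\|_{L^\infty}+\|\nabla\Fp\|_{L^\infty}^2)\,\ud t<\infty$ by Sobolev embedding $H^{2s}\hookrightarrow L^\infty$ (using $2s\ge4>d/2$), which contradicts the blow-up criterion unless $T^*=+\infty$.

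The core of the argument is the a priori estimate, and here the first step is to recast the elastic energy. Because $f_{Bi}$ involves the frame components quadratically in $\nabla\Fp$ but with $\Fp$-dependent coefficients, the raw variational derivatives $\ML_k\CF_{Bi}$ are only quasilinear; I would use the freedom in the null-Lagrangian coefficients $\gamma_i$ and the identities \eqref{frame-differ-properties} to rewrite $f_{Bi}$ so that its principal part is a constant-coefficient second-order elliptic operator acting on $\Fp$ plus lower-order (at most first-order-squared) terms — this is the ``equivalent form'' advertised in the introduction, and it makes $\ML_k\CF_{Bi}=-\sum\mu_{k\ell}\Delta(\text{component})+(\text{quadratic in }\nabla\Fp)$ modulo harmless terms. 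The second step is the energy identity itself: differentiate \eqref{frame-equation-n1}--\eqref{frame-equation-n3} and \eqref{yuan-frame-equation-v} by $\partial^\alpha$ for $|\alpha|\le 2s$, pair the frame equations against $\partial^\alpha(\dot\nn_2\cdot\nn_3)$ etc.\ (equivalently against the appropriate rotational velocity) and the velocity equation against $\partial^\alpha\vv$, and observe that the coupling terms involving $\WW\cdot\aaa_j$, $\DD\cdot\sss_i$, the stress $\sigma$, and the body force $\Ff$ cancel exactly as in Proposition~\ref{Energy dissipative law}, leaving $\frac{\ud}{\ud t}\mathcal{E}+c_0\mathcal{D}$ on the left plus commutator and nonlinear remainders on the right. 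The coefficient conditions \eqref{coefficient-conditions} are what guarantee $c_0>0$.

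The main obstacle — and the place where the biaxial case genuinely differs from the Ericksen--Leslie analysis of Wang--Zhang--Zhang — is controlling the nonlinear remainders, which are cubic expressions in $\nabla\Fp$, $\nabla\vv$ and their higher derivatives, many of them carrying rotational derivatives $\ML_k$ rather than plain $\nabla$. The key device is the orthogonal decomposition \eqref{decomposition}: a term such as $\partial^\alpha(\nabla\Fp\cdot\nabla\Fp)\cdot\partial^\alpha(\ML_k\CF_{Bi})$ must be split so that the genuinely ``tangential'' part $\sum_k|V_k|^{-2}(\cdot\,V_k)(\cdot\,V_k)$ is absorbed into the dissipation $\mathcal{D}$, while the ``normal'' part $\sum_k|W_k|^{-2}(\cdot\,W_k)(\cdot\,W_k)$ is, by the last two lines of \eqref{frame-differ-properties}, either zero or reducible to lower order. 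Once each trilinear term is organized this way, one estimates it by Hölder and the Gagliardo--Nirenberg--Sobolev inequalities, placing the lowest-order factor in $L^\infty$ (bounded by $\mathcal{E}^{1/2}$ via $H^{2s}\hookrightarrow L^\infty$ since $2s\ge4$) and the two higher-order factors in $L^2$ (bounded by $\mathcal{D}^{1/2}$), yielding the desired $\mathcal{E}^{1/2}\mathcal{D}$ bound; the pressure is handled by the usual Leray projection, and the transport terms $\vv\cdot\nabla$ are standard. Carefully matching which rotational-derivative monomials actually appear against the available dissipation — rather than any serious analytic subtlety — is what makes this step laborious, but the decomposition \eqref{decomposition} together with the rewritten elasticity is exactly engineered to make it go through.
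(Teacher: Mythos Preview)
Your proposal is essentially the paper's strategy---rewrite the elasticity so that its principal part is $\gamma_i\Delta\nn_i$, run an $H^{2s}$ energy estimate exploiting the same cancellations as the basic energy law, and close by bootstrap against the blow-up criterion---and it would succeed. Two points of calibration are worth flagging, however.

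First, you locate the orthogonal decomposition \eqref{decomposition} in the wrong place. You describe it as a device for taming trilinear remainders such as $\partial^\alpha(\nabla\Fp\cdot\nabla\Fp)\cdot\partial^\alpha(\ML_k\CF_{Bi})$; in the paper those terms are handled by ordinary product and commutator estimates (Lemma~\ref{pde}). The decomposition is used instead to prove \emph{coercivity} of the dissipation: since $\ML_k\CF_{Bi}=V_k\cdot\frac{\delta\CF_{Bi}}{\delta\Fp}$ only sees the tangential part of the variation, one must show that $\sum_k\|\ML_k\CF_{Bi}\|_{L^2}^2$ nonetheless controls $\|\Delta\Fp\|_{L^2}^2$. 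This is Lemma~\ref{lf} (and its higher-order analogue Lemma~\ref{lfs}), and it works precisely because the ``normal'' components $W_k\cdot\Delta\Fp$ are lower order by the constraint $\Fp\in SO(3)$. Without this step your dissipation $\mathcal{D}$, as written, does not obviously bound $\|\Delta^{s+1}\Fp\|_{L^2}^2$, which is what the nonlinear estimates actually need.

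Second, two smaller discrepancies: the paper tests by computing $\tfrac{\ud}{\ud t}$ of a structured energy $E_s$ (built from $\|\nabla\Delta^s\nn_i\|_{L^2}^2$, $\|\Delta^s\mathrm{div}\,\nn_i\|_{L^2}^2$, $\|\nn_j\cdot\Delta^s(\nabla\times\nn_i)\|_{L^2}^2$) rather than pairing the differentiated equations against $\partial^\alpha(\dot\nn_2\cdot\nn_3)$---your scheme can be reconciled with this but only after several extra commutator manipulations. And the final inequality is $\tfrac{\ud}{\ud t}E_s+D_s\le C\sum_{k=1}^{5}E_s^{k/2}D_s$, not $C\,E_s^{1/2}D_s$; the higher powers arise from the repeated products of frame components in $\sigma$, $\sigma^d$, and $\hh_i$, though of course under smallness this makes no difference to the continuation argument.
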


Theorem \ref{flz} extends the global strong solution results for the Ericksen--Leslie model established in \cite{WZZ2013}.
To prove Theorem \ref{flz}, we will introduce a suitable energy functional to obtain the closed energy estimates. The key challenge lies in controlling higher-order derivative terms involving $\ML_k\CF_{Bi}(k=1,2,3)$. We address it by employing the orthogonal decomposition (\ref{decomposition}) to extract the following dissipative estimate (see Lemma \ref{lf}):
 \begin{align*}
       \sum_{k=1}^{3}\frac{1}{\chi_{k}}\|\ML_k\CF_{Bi}\|^{2}_{L^{2}}\geq~\frac{2\gamma^2}{\chi}\|\Delta\Fp\|_{L^{2}}^{2}+\text{lower order terms},
\end{align*}
where $\chi=\max\{\chi_1,\chi_2,\chi_3\}>0$ and $\gamma=\min\{\gamma_1,\gamma_2,\gamma_3\}>0$.

The remaining sections of this article are organized as follows. In Section 2, the biaxial orientational elasticity and the frame hydrodynamics are reformulated in an equivalent form, respectively. The basic energy dissipative law is recalled. The algebraic structures of the variational derivative with respect to the frame are presented. Also, the product estimate and commutator estimate are given. In Section 3, the global existence of strong solutions for small initial data is established. The dissipated energy estimate of the higher-order derivative term will be provided in the Appendix.

\section{Equivalent forms and some useful lemmas}

A good form of the model enables us to explore the intrinsic structure, thereby facilitating analysis.
This section will be devoted to reformulating the biaxial orientational elasticity and the frame hydrodynamics, and then presenting some useful lemmas used later. 

For any frame $\Fp=(\nn_1,\nn_2,\nn_3)\in SO(3)$, by vector algebra operations, we obtain the following identity relations:
\begin{align*}
&|\nn_1\times(\nabla\times\nn_1)|^2=(\nn_2\cdot\nabla\times\nn_1)^2+(\nn_3\cdot\nabla\times\nn_1)^2,\\
&|\nn_2\times(\nabla\times\nn_2)|^2=(\nn_1\cdot\nabla\times\nn_2)^2+(\nn_3\cdot\nabla\times\nn_2)^2,\\
&|\nn_3\times(\nabla\times\nn_3)|^2=(\nn_1\cdot\nabla\times\nn_3)^2+(\nn_2\cdot\nabla\times\nn_3)^2,\\
&|\nabla\nn_i|^2=(\nabla\cdot\nn_i)^2+(\nn_i\cdot\nabla\times\nn_i)^2+|\nn_i\times(\nabla\times\nn_i)|^2\\
&\qquad\qquad+\nabla\cdot[(\nn_i\cdot\nabla)\nn_i-(\nabla\cdot\nn_i)\nn_i],\quad i=1,2,3.
\end{align*}
Armed with the above identity relations, the energy density $f_{Bi}(\Fp,\nabla\Fp)$ in (\ref{elastic-energy}) can be reformulated as 
\begin{align}\label{new-elasitic-density}
    f_{Bi}(\Fp,\nabla\Fp)=\frac{1}{2}\sum^3_{i=1}\gamma_i|\nabla \nn_{i}|^2+W(\Fp,\nabla\Fp).
\end{align}
Here, the coefficients $\gamma_i(i=1,2,3)$ are taken as, respectively,
\begin{align}\label{gamma-i3}
\left\{\begin{aligned}
&\gamma_1=\min\{K_1,K_4,K_7,K_{10}\}>0,~ \gamma_2=\min\{K_2,K_5,K_8,K_{11}\}>0,\\
&\gamma_3=\min\{K_3,K_6,K_9,K_{12}\}>0, 
\end{aligned}\right.
\end{align}
and $W(\Fp,\nabla\Fp)$ is expressed by
\begin{align*}
W(\Fp,\nabla\Fp)=&\frac{1}{2}\Big(\sum^3_{i=1}k_i(\nabla\cdot\nn_i)^2+\sum^3_{i,j=1}k_{ij}(\nn_i\cdot\nabla\times\nn_j)^2\Big), 
\end{align*}
where the coefficients $k_i\geq 0,k_{ij}\geq0(i,j=1,2,3)$ are given by
\begin{align}\label{ki-kij}
\left\{\begin{aligned}
&k_1=K_1-\gamma_1,\quad k_2=K_2-\gamma_2,\quad
k_3=K_3-\gamma_3,\\
&k_{11}=K_4-\gamma_1,\quad
k_{22}=K_5-\gamma_2,\quad
k_{33}=K_6-\gamma_3,\\
&k_{31}=K_7-\gamma_1,\quad
k_{12}=K_8-\gamma_2,\quad
k_{23}=K_9-\gamma_3,\\
&k_{21}=K_{10}-\gamma_1,\quad
k_{32}=K_{11}-\gamma_2,\quad
k_{13}=K_{12}-\gamma_3.
\end{aligned}\right.
\end{align}

For simplicity, we define
\begin{align}\label{hh-i3-definition}
&(\hh_1,\hh_2,\hh_3)\eqdefa-\Big(\frac{\delta\CF_{Bi}}{\delta\nn_1},\frac{\delta\CF_{Bi}}{\delta\nn_2},\frac{\delta\CF_{Bi}}{\delta\nn_3}\Big)=-\frac{\delta\CF_{Bi}}{\delta\Fp}=\nabla\cdot\frac{\partial f_{Bi}}{\partial(\nabla\Fp)}-\frac{\partial f_{Bi}}{\partial\Fp}.
\end{align}
For any given frame field $\Fp=(\nn_1,\nn_2,\nn_3)\in SO(3)$, 
the algebraic structure of the variational derivative $\frac{\delta\CF_{Bi}}{\delta\Fp}$ has been introduced in \cite{LWX2022}, which is conducive to extracting higher-order derivative terms.

\begin{lemma}[algebraic structure \cite{LWX2022}]\label{h-decomposition}
For the terms $\hh_i(i=1,2,3)$, we have the following representation:
\begin{align}\label{H-representation}
\hh_i=\gamma_i\Delta\nn_i+k_i\nabla{\rm div}\nn_i
-\sum^3_{j=1}k_{ji}\nabla\times(\nabla\times\nn_i\cdot(\nn_j\otimes\nn_j))-\sum^3_{j=1}k_{ij}(\nn_i\cdot\nabla\times\nn_j)(\nabla\times\nn_j),
\end{align}
where the coefficients are expressed by \eqref{gamma-i3} and \eqref{ki-kij}. 
\end{lemma}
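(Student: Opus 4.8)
\textbf{Proof proposal for Lemma \ref{h-decomposition}.}

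The plan is to compute the variational derivative $\frac{\delta\CF_{Bi}}{\delta\nn_i}$ term by term starting from the reformulated density \eqref{new-elasitic-density}, $f_{Bi}=\frac12\sum_i\gamma_i|\nabla\nn_i|^2+W(\Fp,\nabla\Fp)$, and then collect the contributions into the four groups appearing in \eqref{H-representation}. First I would treat the isotropic Dirichlet part $\frac12\sum_i\gamma_i|\nabla\nn_i|^2$: this immediately yields $\gamma_i\Delta\nn_i$ via the Euler--Lagrange identity $\nabla\cdot\frac{\partial}{\partial(\nabla\nn_i)}-\frac{\partial}{\partial\nn_i}$, since there is no explicit $\nn_i$ dependence here. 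It is worth noting that, although the original null-Lagrangian surface terms in $f_{Bi}$ were absorbed during the rewriting \eqref{new-elasitic-density}, they contribute nothing to the variation (being null Lagrangians), so we may safely work from the new form; alternatively one checks directly that the identity $|\nabla\nn_i|^2=(\nabla\cdot\nn_i)^2+(\nn_i\cdot\nabla\times\nn_i)^2+|\nn_i\times(\nabla\times\nn_i)|^2+\nabla\cdot[\cdots]$ is consistent under $\delta/\delta\Fp$.

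Next I would handle $W(\Fp,\nabla\Fp)=\frac12\big(\sum_i k_i(\nabla\cdot\nn_i)^2+\sum_{i,j}k_{ij}(\nn_i\cdot\nabla\times\nn_j)^2\big)$. For the divergence terms $\frac12 k_i(\nabla\cdot\nn_i)^2$, the variation with respect to $\nn_i$ gives $-k_i\nabla(\nabla\cdot\nn_i)=-k_i\nabla\,{\rm div}\,\nn_i$ after the sign convention in \eqref{hh-i3-definition} is applied (the minus sign in $\hh_i=-\delta\CF_{Bi}/\delta\nn_i$ flips it to $+k_i\nabla\,{\rm div}\,\nn_i$). The curl terms $\frac12 k_{ij}(\nn_i\cdot\nabla\times\nn_j)^2$ are the delicate ones because each such term depends on \emph{two} of the frame vectors: differentiating with respect to $\nn_j$ (the one inside the curl) produces, after an integration by parts moving the curl off the test function, the term $-k_{ij}\nabla\times\big((\nn_i\cdot\nabla\times\nn_j)\nn_i\big)=-k_{ij}\nabla\times\big(\nabla\times\nn_j\cdot(\nn_i\otimes\nn_i)\big)$ type contribution; differentiating with respect to $\nn_i$ (the one contracted against the curl, appearing algebraically) produces $-k_{ij}(\nn_i\cdot\nabla\times\nn_j)(\nabla\times\nn_j)$. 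Relabelling the summation index $j\leftrightarrow i$ in the first family so that both are organized as the variation ``at slot $\nn_i$'' is exactly what turns the two double sums in \eqref{H-representation} into $-\sum_j k_{ji}\nabla\times(\nabla\times\nn_i\cdot(\nn_j\otimes\nn_j))$ and $-\sum_j k_{ij}(\nn_i\cdot\nabla\times\nn_j)(\nabla\times\nn_j)$ respectively, after the overall sign flip from \eqref{hh-i3-definition}.

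The main obstacle I anticipate is bookkeeping rather than anything conceptual: correctly tracking which frame vector a given scalar invariant is being varied against, handling the index relabelling in the curl family without sign or subscript errors, and writing the contraction $\nabla\times\nn_j$ against the dyad $\nn_i\otimes\nn_i$ in the compact form $\nabla\times\nn_j\cdot(\nn_i\otimes\nn_i)$ used in the statement. A secondary subtlety is that the frame vectors are constrained to lie in $SO(3)$, so strictly speaking the variation should be taken tangentially; however, since \eqref{hh-i3-definition} defines $\hh_i$ through the \emph{unconstrained} Euler--Lagrange expression $\nabla\cdot\frac{\partial f_{Bi}}{\partial(\nabla\nn_i)}-\frac{\partial f_{Bi}}{\partial\nn_i}$, one simply computes that expression directly and no Lagrange-multiplier terms enter \eqref{H-representation} (the constraint is accounted for elsewhere, via the operators $\ML_k$ and the decomposition \eqref{decomposition}). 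Once all four groups are assembled and the coefficients matched against \eqref{gamma-i3} and \eqref{ki-kij}, the representation \eqref{H-representation} follows.
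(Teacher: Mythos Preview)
Your proposal is correct: the paper does not actually prove this lemma but simply cites it from \cite{LWX2022}, and your direct Euler--Lagrange computation from the reformulated density \eqref{new-elasitic-density} is precisely the standard route to \eqref{H-representation}. The bookkeeping you outline (Dirichlet part $\to\gamma_i\Delta\nn_i$; divergence part $\to k_i\nabla\,{\rm div}\,\nn_i$; curl invariants split into the $\partial/\partial(\nabla\nn_i)$ piece giving $-\sum_j k_{ji}\nabla\times(\nabla\times\nn_i\cdot(\nn_j\otimes\nn_j))$ and the algebraic $\partial/\partial\nn_i$ piece giving $-\sum_j k_{ij}(\nn_i\cdot\nabla\times\nn_j)(\nabla\times\nn_j)$; null Lagrangians and the $SO(3)$ constraint do not enter) is exactly right.
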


Applying the definitions of $\hh_i(i=1,2,3)$ in (\ref{hh-i3-definition}), the rotational derivatives $\ML_k\CF_{Bi}(k=1,2,3)$ are expressed by, respectively,
\begin{align}\label{rotational-vd-H}
    \ML_1\CF_{Bi}=\nn_2\cdot\hh_3-\nn_3\cdot\hh_2,\quad\ML_2\CF_{Bi}=\nn_3\cdot\hh_1-\nn_1\cdot\hh_3, \quad\ML_3\CF_{Bi}=\nn_1\cdot\hh_2-\nn_2\cdot\hh_1.
\end{align}

The next task is to rewrite the frame hydrodynamics (\ref{frame-equation-n1})--(\ref{yuan-imcompressible-v}) in an equivalent form. Using the property (\ref{frame-differ-properties}), the biaxial frame system (\ref{frame-equation-n1})--(\ref{yuan-imcompressible-v}) can be equivalently expressed by the equations for all coordinates of $\Fp=(\nn_1,\nn_2,\nn_3)$, where the constraint $\Fp=(\nn_1,\nn_2,\nn_3)\in SO(3)$ is automatically implied by these equations themselves. 
Thus, the frame hydrodynamics (\ref{frame-equation-n1})--(\ref{yuan-imcompressible-v}) can be reformulated as (see \cite{LWX2022,LWX2024} for details)
\begin{align}
    \dot{\nn}_{1}=&\Big(\frac{1}{2}\WW\cdot\aaa_{1}+\frac{\eta_{3}}{\chi_{3}}\DD\cdot\sss_{3}-\frac{1}{\chi_{3}}\ML_3\CF_{Bi}\Big)\nn_{2}\nonumber\\
    &-\Big(\frac{1}{2}\WW\cdot\aaa_{2}+\frac{\eta_{2}}{\chi_{2}}\DD\cdot\sss_{4}-\frac{1}{\chi_{2}}\ML_2\CF_{Bi}\Big)\nn_{3},\label{n1.}\\
    \dot{\nn}_{2}=&-\Big(\frac{1}{2}\WW\cdot\aaa_{1}+\frac{\eta_{3}}{\chi_{3}}\DD\cdot\sss_{3}-\frac{1}{\chi_{3}}\ML_3\CF_{Bi}\Big)\nn_{1}\nonumber\\
    &+\Big(\frac{1}{2}\WW\cdot\aaa_{3}+\frac{\eta_{1}}{\chi_{1}}\DD\cdot\sss_{5}-\frac{1}{\chi_{1}}\ML_1\CF_{Bi}\Big)\nn_{3},\label{n2.}\\
   \dot{\nn}_{3}=&\Big(\frac{1}{2}\WW\cdot\aaa_{2}+\frac{\eta_{2}}{\chi_{2}}\DD\cdot\sss_{4}-\frac{1}{\chi_{2}}\ML_2\CF_{Bi}\Big)\nn_{1}\nonumber\\
   &-\Big(\frac{1}{2}\WW\cdot\aaa_{3}+\frac{\eta_{1}}{\chi_{1}}\DD\cdot\sss_{5}-\frac{1}{\chi_{1}}\ML_1\CF_{Bi}\Big)\nn_{2},\label{n3.}\\
   \dot{\vv}=&-\nabla p+\eta\Delta\vv+\nabla\cdot(\sigma+\sigma^d),\label{v.}\\
\nabla\cdot\vv=&~0.\label{imcompressible-v}
\end{align}
Here, we have also rewritten the body force $\Ff$, which can be viewed as an elastic stress $\sigma^d$ (see (\ref{sigma-d}) for details). More specifically, for the body force $\Ff$ we have the following lemma (see \cite{LWX2024}).
\begin{lemma}[\cite{LWX2024}]
   For any frame $\Fp=(\nn_1,\nn_2,\nn_3)\in SO(3)$, it follows that
\begin{align*}
(\mathfrak{F})_i=\sum^3_{\alpha=1}\partial_i\nn_{\alpha}\cdot\frac{\delta\CF_{Bi}}{\delta\nn_{\alpha}}\eqdefa\partial_j\sigma^d_{ij}+\partial_i\widetilde{p},
\end{align*}
where $\widetilde{p}$ can be absorbed into the pressure term $p$ in \eqref{v.} and the elastic energy $\CF_{Bi}$ is given by \eqref{elastic-energy}, and the stress $\sigma^d_{ij}=-\frac{\partial f_{Bi}}{\partial(\partial_j\Fp)}\cdot\partial_i\Fp$.
\end{lemma}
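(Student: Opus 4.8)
The plan is to split the claim into two independent identities and verify each by a short direct calculation. The first identity, $(\Ff)_i=\sum_{\alpha=1}^3\partial_i\nn_\alpha\cdot\frac{\delta\CF_{Bi}}{\delta\nn_\alpha}$, re-expresses the explicit body force \eqref{body force} intrinsically, and it is the only place where the constraint $\Fp\in SO(3)$ enters. The second identity, $\sum_\alpha\partial_i\nn_\alpha\cdot\frac{\delta\CF_{Bi}}{\delta\nn_\alpha}=\partial_j\sigma^d_{ij}+\partial_i\widetilde p$ with $\sigma^d_{ij}=-\frac{\partial f_{Bi}}{\partial(\partial_j\Fp)}\cdot\partial_i\Fp$ and $\widetilde p=f_{Bi}$, is the classical Ericksen-stress manipulation and uses nothing beyond the chain rule.

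For the first identity I would start from $\hh_\alpha=-\delta\CF_{Bi}/\delta\nn_\alpha$ (see \eqref{hh-i3-definition}), so that $\sum_\alpha\partial_i\nn_\alpha\cdot\frac{\delta\CF_{Bi}}{\delta\nn_\alpha}=-\sum_\alpha\partial_i\nn_\alpha\cdot\hh_\alpha$, and then expand $\partial_i\nn_\alpha=\sum_\beta(\partial_i\nn_\alpha\cdot\nn_\beta)\nn_\beta$ using the first relations in \eqref{frame-differ-properties}. The diagonal terms $\alpha=\beta$ drop out, and with the antisymmetry $\partial_i\nn_\alpha\cdot\nn_\beta=-\partial_i\nn_\beta\cdot\nn_\alpha$ the off-diagonal terms pair up: the $(\alpha,\beta)\in\{(1,2),(2,1)\}$ contribution collapses to $(\partial_i\nn_1\cdot\nn_2)(\nn_1\cdot\hh_2-\nn_2\cdot\hh_1)$, and symmetrically for the pairs $\{(3,1),(1,3)\}$ and $\{(2,3),(3,2)\}$. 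Comparing with \eqref{rotational-vd-H}, the three pairs are exactly $(\partial_i\nn_1\cdot\nn_2)\ML_3\CF_{Bi}$, $(\partial_i\nn_3\cdot\nn_1)\ML_2\CF_{Bi}$ and $(\partial_i\nn_2\cdot\nn_3)\ML_1\CF_{Bi}$, whose sum is $(\Ff)_i$ by \eqref{body force}.

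For the second identity I would write $\frac{\delta\CF_{Bi}}{\delta\nn_\alpha}=\frac{\partial f_{Bi}}{\partial\nn_\alpha}-\partial_j\frac{\partial f_{Bi}}{\partial(\partial_j\nn_\alpha)}$, contract with $\partial_i\nn_\alpha$, and sum over $\alpha$. Since $f_{Bi}=f_{Bi}(\Fp,\nabla\Fp)$ has no explicit $\xx$-dependence, the chain rule gives $\partial_i f_{Bi}=\sum_\alpha\big(\tfrac{\partial f_{Bi}}{\partial\nn_\alpha}\cdot\partial_i\nn_\alpha+\tfrac{\partial f_{Bi}}{\partial(\partial_j\nn_\alpha)}\cdot\partial_j\partial_i\nn_\alpha\big)$; using this to eliminate $\sum_\alpha\tfrac{\partial f_{Bi}}{\partial\nn_\alpha}\cdot\partial_i\nn_\alpha$ and recombining the two remaining terms into one divergence yields $\sum_\alpha\partial_i\nn_\alpha\cdot\frac{\delta\CF_{Bi}}{\delta\nn_\alpha}=\partial_i f_{Bi}-\partial_j\big(\sum_\alpha\tfrac{\partial f_{Bi}}{\partial(\partial_j\nn_\alpha)}\cdot\partial_i\nn_\alpha\big)$. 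This is the asserted form with $\sigma^d_{ij}=-\frac{\partial f_{Bi}}{\partial(\partial_j\Fp)}\cdot\partial_i\Fp$ and $\widetilde p=f_{Bi}$; since $\partial_i\widetilde p$ is a gradient, it merges with $-\nabla p$ in \eqref{v.} after renaming the pressure.

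I do not anticipate a genuine obstacle: the computation is purely algebraic. The only care needed is bookkeeping — tracking the sign in $\hh_\alpha=-\delta\CF_{Bi}/\delta\nn_\alpha$ and the antisymmetry of $\partial_i\nn_\alpha\cdot\nn_\beta$ in the first step, and noting throughout that the variational derivative in play is the free (unconstrained) Euler--Lagrange operator of \eqref{hh-i3-definition}, so that the frame constraint enters exclusively through the pointwise identities \eqref{frame-differ-properties} and never through the differentiation of the density $f_{Bi}$. If a more explicit expression for $\sigma^d$ is desired one may feed in the reformulation \eqref{new-elasitic-density}, but this is not required for the statement as worded.
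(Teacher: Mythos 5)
Your proof is correct. The paper itself does not prove this lemma — it is quoted from \cite{LWX2024} — but your two-step argument (expanding $\partial_i\nn_\alpha$ in the frame basis and using the antisymmetry of $\partial_i\nn_\alpha\cdot\nn_\beta$ to reassemble the rotational derivatives \eqref{rotational-vd-H}, then the standard Ericksen-stress chain-rule manipulation with $\widetilde p=f_{Bi}$) is exactly the expected derivation, and all signs check out against \eqref{body force} and \eqref{hh-i3-definition}.
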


By a direct calculation,  the stress $\sigma^d$ is expressed by
\begin{align}\label{sigma-d}
\sigma_{ij}^d(\nabla\Fp,\Fp) &=-\sum_{\alpha=1}^3\gamma_{\alpha}\partial_jn_{\alpha k}\partial_in_{\alpha k}-\sum_{\alpha=1}^3 k_{\alpha}(\nabla\cdot\nn_{\alpha})\partial_in_{\alpha j}\nonumber\\
&\quad-\sum_{\alpha,\beta=1}^3 k_{\beta\alpha}\Big((\partial_jn_{\alpha p}-\partial_pn_{\alpha j})\partial_in_{\alpha p}+n_{\beta j}n_{\beta l}(\partial_pn_{\alpha l}-\partial_ln_{\alpha p})\partial_in_{\alpha p}\nonumber\\
&\quad+n_{\beta p}n_{\beta l}(\partial_ln_{\alpha j}-\partial_jn_{\alpha l})\partial_in_{\alpha p}\Big).
\end{align}

The biaxial frame system \eqref{n1.}--\eqref{imcompressible-v} enjoys the basic energy dissipative law, which can be found in \cite{LWX2022}.
\begin{lemma}[energy dissipative law \cite{LWX2022}]\label{Energy dissipative law}
    Under the condition of $(\ref{coefficient-conditions})$, suppose that $(\Fp,\vv)$ is a strong solution to the biaxial frame system \eqref{n1.}--\eqref{imcompressible-v} with initial data $(\Fp^{(0)},\vv^{(0)})$ and $\nabla\cdot\vv^{(0)}=0$. Then it holds that 
    \begin{align*}
        &\frac{\ud}{\ud t}\Big(\frac{1}{2}\int_{\mathbb{R}^d}|\vv|^2\ud\xx+\CF_{Bi}[\Fp]\Big)
=-\eta\|\nabla\vv\|^2_{L^2}-\sum^3_{k=1}\frac{1}{\chi_k}\|\ML_k\CF_{Bi}\|^2_{L^2}\nonumber\\
&\quad-\bigg(\beta_1\|\A\cdot\sss_1\|^2_{L^2}+2\beta_0\int_{\mathbb{R}^d}(\A\cdot\sss_1)(\A\cdot\sss_2)\ud\xx+\beta_2\|\A\cdot\sss_2\|^2_{L^2}\bigg)\nonumber\\
&\quad-\Big(\beta_3-\frac{\eta^2_3}{\chi_3}\Big)\|\A\cdot\sss_3\|^2_{L^2}
-\Big(\beta_4-\frac{\eta^2_2}{\chi_2}\Big)\|\A\cdot\sss_4\|^2_{L^2}
-\Big(\beta_5-\frac{\eta^2_1}{\chi_1}\Big)\|\A\cdot\sss_5\|^2_{L^2}.
    \end{align*}
\end{lemma}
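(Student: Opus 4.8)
\emph{Plan.} This is the basic energy balance of the coupled system, and I would obtain it by the classical route for Ericksen--Leslie--type models: test the momentum equation with $\vv$, differentiate the elastic energy $\CF_{Bi}[\Fp]$ along the flow, and combine the two using the frame equations, arranged so that every transport and rotation term cancels and only quadratic forms remain (throughout I write $\DD$ for the rate-of-strain tensor, i.e.\ the $\A$ appearing in the statement). Taking the $L^2$-inner product of \eqref{v.} with $\vv$ and using $\nabla\cdot\vv=0$ together with decay at infinity, the convection and pressure terms vanish, the viscosity term yields $-\eta\|\nabla\vv\|_{L^2}^2$, and integration by parts gives $\int\nabla\cdot(\sigma+\sigma^d)\cdot\vv\,\ud\xx=-\int(\sigma+\sigma^d):\nabla\vv\,\ud\xx$. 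For the elastic part, the chain rule and integration by parts give $\frac{\ud}{\ud t}\CF_{Bi}[\Fp]=\int\frac{\delta\CF_{Bi}}{\delta\Fp}\cdot\partial_t\Fp\,\ud\xx=-\int(\hh_1\cdot\partial_t\nn_1+\hh_2\cdot\partial_t\nn_2+\hh_3\cdot\partial_t\nn_3)\,\ud\xx$ with the $\hh_i$ of \eqref{hh-i3-definition}; writing $\partial_t\Fp=\dot\Fp-(\vv\cdot\nabla)\Fp$ and invoking the identity $\sum_\alpha\partial_i\nn_\alpha\cdot\frac{\delta\CF_{Bi}}{\delta\nn_\alpha}=\partial_j\sigma^d_{ij}+\partial_i\widetilde p$ of \cite{LWX2024} (with $\widetilde p$ absorbed into $p$ and $\nabla\cdot\vv=0$), the transport piece $\int\hh\cdot(\vv\cdot\nabla)\Fp\,\ud\xx$ equals $\int\sigma^d:\nabla\vv\,\ud\xx$ and exactly cancels the $\sigma^d$-contribution from the momentum equation. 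Adding the two identities gives
\begin{align*}
\frac{\ud}{\ud t}\Big(\frac12\|\vv\|_{L^2}^2+\CF_{Bi}[\Fp]\Big)=-\eta\|\nabla\vv\|_{L^2}^2-\int_{\mathbb R^d}\sigma:\nabla\vv\,\ud\xx-\int_{\mathbb R^d}\hh\cdot\dot\Fp\,\ud\xx.
\end{align*}

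\emph{The algebraic core.} Because $\dot\Fp\in T_\Fp SO(3)$ (so $\dot\Fp\cdot W_\alpha=0$ by \eqref{frame-differ-properties}) and $\hh\cdot V_k=-\ML_k\CF_{Bi}$ by \eqref{rotational-vd-H}, the orthogonal decomposition \eqref{decomposition} collapses the last integral to $-\int\hh\cdot\dot\Fp\,\ud\xx=\int(\ML_1\CF_{Bi}\,\dot\nn_2\cdot\nn_3+\ML_2\CF_{Bi}\,\dot\nn_3\cdot\nn_1+\ML_3\CF_{Bi}\,\dot\nn_1\cdot\nn_2)\,\ud\xx$. Expanding $\sigma$ from \eqref{sigma} and contracting with $\nabla\vv$ (using $\sss_i:\nabla\vv=\DD\cdot\sss_i$ since $\sss_i$ is symmetric and $\aaa_j:\nabla\vv=\WW\cdot\aaa_j$ since $\aaa_j$ is antisymmetric), the integrand $\sigma:\nabla\vv$ splits into the quadratic form $\beta_1(\DD\cdot\sss_1)^2+2\beta_0(\DD\cdot\sss_1)(\DD\cdot\sss_2)+\beta_2(\DD\cdot\sss_2)^2$ plus three groups, one for each triple $(\sss_3,\aaa_1,\beta_3,\eta_3,\chi_3)$, $(\sss_4,\aaa_2,\beta_4,\eta_2,\chi_2)$, $(\sss_5,\aaa_3,\beta_5,\eta_1,\chi_1)$, of the form $\beta(\DD\cdot\sss)^2-\eta N(\DD\cdot\sss)+\eta(\DD\cdot\sss)w-\chi Nw$, where $N:=\dot\nn\cdot\nn-w$ and $w:=\frac12\WW\cdot\aaa$ are the corotational quantities already appearing in \eqref{frame-equation-n1}--\eqref{frame-equation-n3}. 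Inserting $\dot\nn\cdot\nn=N+w$ into the $\ML_k\CF_{Bi}$-integral and subtracting $\int\sigma:\nabla\vv$, all $w$-terms cancel, leaving in each channel the quadratic form $-\chi N^2+2\eta N(\DD\cdot\sss)-\beta(\DD\cdot\sss)^2$. Using the frame equation itself in the form $\ML_k\CF_{Bi}=\eta(\DD\cdot\sss)-\chi N$, this equals $-\frac{1}{\chi}(\ML_k\CF_{Bi})^2-(\beta-\frac{\eta^2}{\chi})(\DD\cdot\sss)^2$ with the matched triple; summing over the three channels, restoring the $(\beta_0,\beta_1,\beta_2)$-form and $-\eta\|\nabla\vv\|_{L^2}^2$, yields precisely the asserted identity, and the inequalities in \eqref{coefficient-conditions} ($\beta_0^2\le\beta_1\beta_2$, $\eta_1^2\le\beta_5\chi_1$, $\eta_2^2\le\beta_4\chi_2$, $\eta_3^2\le\beta_3\chi_3$) make every term on the right nonpositive.

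\emph{Main obstacle.} The only real work is the bookkeeping in the second paragraph: one must expand the nine-term stress $\sigma$ of \eqref{sigma} term by term, verify that after the substitution $\dot\nn\cdot\nn=N+w$ the cross terms $\eta(\DD\cdot\sss)w$ and $\chi Nw$ coming from $\int\hh\cdot\dot\Fp$ precisely match, with opposite sign, those coming from $\int\sigma:\nabla\vv$ so that they cancel, and then notice that the residual quadratic form in $(N,\DD\cdot\sss)$ reassembles — again through the frame equations — into $\frac{1}{\chi_k}(\ML_k\CF_{Bi})^2$ plus the Leslie-type coefficient $\beta_k-\eta_k^2/\chi_k$. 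The accompanying fact used in the first paragraph, that the distortion stress $\sigma^d$ of \eqref{sigma-d} exactly carries the transport of elastic energy, follows from the force lemma of \cite{LWX2024}; both cancellations are structural consequences of the tensor-to-frame derivation in \cite{LX2023}, so the computation is conceptually routine but lengthy.
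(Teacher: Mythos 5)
This lemma is quoted in the paper from \cite{LWX2022} without proof, so there is no in-paper argument to compare against; your proposal reconstructs the standard derivation and it is correct. I checked the key cancellations: the transport part of $\frac{\ud}{\ud t}\CF_{Bi}[\Fp]$ does cancel the $\sigma^d$-contribution via the force lemma, the orthogonal decomposition with $\dot\Fp\cdot W_\alpha=0$ and $\hh\cdot V_k=-\ML_k\CF_{Bi}$ (with $|V_k|^2=2$ and $\dot\Fp\cdot V_1=2\dot\nn_2\cdot\nn_3$) collapses $-\int\hh\cdot\dot\Fp\,\ud\xx$ as you claim, and in each channel the residual $-\chi N^2+2\eta N(\DD\cdot\sss)-\beta(\DD\cdot\sss)^2$ with $\ML_k\CF_{Bi}=\eta\,\DD\cdot\sss-\chi N$ reassembles exactly into $-\frac{1}{\chi}(\ML_k\CF_{Bi})^2-(\beta-\frac{\eta^2}{\chi})(\DD\cdot\sss)^2$, matching the stated identity with $\A=\DD$.
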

    
At the end of this section, let us present the product estimate and commutator estimate (see \cite{BCD2011,WZZ2013} for example),  which will be frequently utilized in this article.
\begin{lemma}\label{pde}
 For many multi-indices $\alpha,\beta,\gamma\in\mathbb{N}^{3}$ and any differential operator $\CD$, it follows that 
 \begin{align*}
  \|\CD^{\alpha}(fg)\|_{L^{2}}\le& ~C\sum_{|\gamma|=|\alpha|}(\|f\|_{L^{\infty}}\|\CD^{\gamma}g\|_{L^{2}}+\|g\|_{L^{\infty}}\|\CD^{\gamma}f\|_{L^{2}}),\\
  \|[\CD^{\alpha},f]\CD^{\beta}g\|_{L^{2}}\le&~ C\bigg(\sum_{|\gamma|=|\alpha|+|\beta|}\|\CD^{\gamma}f\|_{L^{2}}\|g\|_{L^{\infty}}+\sum_{|\gamma|=|\alpha|+|\beta|-1}\|\nabla f\|_{L^{\infty}}\|\CD^{\gamma}g\|_{L^{2}}\bigg).
 \end{align*}
\end{lemma}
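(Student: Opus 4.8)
Both inequalities are of Moser / Kato--Ponce type, and the plan is to establish them by the classical route: expand by the Leibniz rule, apply Hölder's inequality with exponents dictated by scaling, interpolate each factor by a Gagliardo--Nirenberg inequality, and convert the resulting products of powers into sums via Young's inequality. It suffices to treat $\CD=\nabla$ on $\mathbb{R}^{d}$; for a general first-order differential operator the argument is identical in structure, and all constants may be taken to depend only on $|\alpha|$, $|\beta|$ and $d$. Throughout I will use the Gagliardo--Nirenberg interpolation inequality
\begin{align*}
\|\CD^{\mu}u\|_{L^{2m/|\mu|}}\le C\,\|u\|_{L^{\infty}}^{1-|\mu|/m}\,\|\CD^{m}u\|_{L^{2}}^{|\mu|/m},\qquad 0\le|\mu|\le m,
\end{align*}
with the convention that the left-hand side is $\|u\|_{L^{\infty}}$ when $|\mu|=0$, and where $\|\CD^{m}u\|_{L^{2}}$ is shorthand for $\sum_{|\gamma|=m}\|\CD^{\gamma}u\|_{L^{2}}$.

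For the product estimate I would set $N=|\alpha|$ and start from the Leibniz expansion $\CD^{\alpha}(fg)=\sum_{\mu\le\alpha}\binom{\alpha}{\mu}\CD^{\mu}f\,\CD^{\alpha-\mu}g$. For each $\mu$, Hölder's inequality with the conjugate pair $\bigl(\tfrac{2N}{|\mu|},\tfrac{2N}{N-|\mu|}\bigr)$ gives
\begin{align*}
\|\CD^{\mu}f\,\CD^{\alpha-\mu}g\|_{L^{2}}\le\|\CD^{\mu}f\|_{L^{2N/|\mu|}}\,\|\CD^{\alpha-\mu}g\|_{L^{2N/(N-|\mu|)}},
\end{align*}
and applying the interpolation inequality to each factor with $m=N$ produces a bound of the form $\bigl(\|\CD^{N}f\|_{L^{2}}\|g\|_{L^{\infty}}\bigr)^{\theta}\bigl(\|f\|_{L^{\infty}}\|\CD^{N}g\|_{L^{2}}\bigr)^{1-\theta}$ with $\theta=|\mu|/N\in[0,1]$. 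Young's inequality then bounds this by $\|\CD^{N}f\|_{L^{2}}\|g\|_{L^{\infty}}+\|f\|_{L^{\infty}}\|\CD^{N}g\|_{L^{2}}$; summing over $\mu\le\alpha$ yields the claim.

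For the commutator estimate the key preliminary observation is that in $[\CD^{\alpha},f]\CD^{\beta}g=\CD^{\alpha}(f\,\CD^{\beta}g)-f\,\CD^{\alpha+\beta}g=\sum_{0<\mu\le\alpha}\binom{\alpha}{\mu}\CD^{\mu}f\,\CD^{\alpha-\mu+\beta}g$ the $\mu=0$ term cancels, so every summand carries at least one derivative on $f$. Setting $N=|\alpha|+|\beta|$, $M=N-1$, and writing $\CD^{\mu}f=\CD^{\mu'}(\nabla f)$ with $|\mu'|=|\mu|-1$ (legitimate since $|\mu|\ge1$), one has $|\alpha-\mu+\beta|=M-|\mu'|$; Hölder with the conjugate pair $\bigl(\tfrac{2M}{|\mu'|},\tfrac{2M}{M-|\mu'|}\bigr)$, followed by the interpolation inequality applied to $\nabla f$ (with $m=M$, so $\CD^{M}(\nabla f)$ has order $N$) and to $g$ (with $m=M$), gives a bound $\bigl(\|\CD^{N}f\|_{L^{2}}\|g\|_{L^{\infty}}\bigr)^{\theta}\bigl(\|\nabla f\|_{L^{\infty}}\|\CD^{M}g\|_{L^{2}}\bigr)^{1-\theta}$ with $\theta=|\mu'|/M\in[0,1]$. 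Young's inequality and summation over $\mu$ then deliver the stated bound.

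I expect the only genuine work to be the bookkeeping: verifying in each case that the two Hölder exponents are conjugate, that the interpolation exponents $\theta$ and $1-\theta$ match up so that Young's inequality reproduces exactly the two terms on the right-hand side, and that the endpoint exponents (namely $2N/|\mu|=\infty$ when $|\mu|=0$, and $2M/|\mu'|=\infty$ when $|\mu|=1$) are handled by the trivial case of the interpolation inequality. There is no conceptual obstacle here; alternatively, both estimates can be obtained from the Bony paraproduct calculus as in \cite{BCD2011}, or quoted essentially verbatim from \cite{WZZ2013}.
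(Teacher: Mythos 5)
Your argument is correct, and it is the standard Gagliardo--Nirenberg/Young proof of these Moser-type product and commutator estimates; the paper itself offers no proof, simply quoting the results from \cite{BCD2011,WZZ2013}, exactly as you note at the end. The bookkeeping you defer (conjugacy of the H\"older exponents $\tfrac{2N}{|\mu|}$, $\tfrac{2N}{N-|\mu|}$, the matching of the interpolation weights $\theta=|\mu|/N$ resp.\ $|\mu'|/M$, and the cancellation of the $\mu=0$ term in the commutator) all checks out, so nothing further is needed.
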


\section{Global well-posedness of strong solutions}

This section is devoted to proving the global well-posedness of strong solutions to the biaxial frame system  \eqref{n1.}--\eqref{imcompressible-v} with small initial data. In what follows, we will use $C>0$ to denote a constant independent of the solution $(\Fp,\vv)$, but depending on the coefficients in the system (\ref{n1.})--(\ref{imcompressible-v}). The notation $\langle\cdot,\cdot\rangle$ represents the $L^{2}$-inner product in $\mathbb{R}^{d}$ with $d=2,3$.

\subsection{Energy functionals and dissipative estimates}

Assume that $(\Fp,\vv)$ is the solution to the biaxial frame system (\ref{n1.})--(\ref{imcompressible-v}) obtained by Theorem \ref{wcc} . We define two energy functionals as follows:
\begin{align}
    E_{s}(\Fp,\vv)\eqdefa&\CF_{Bi}[\Fp]+\frac{1}{2}\|\vv\|_{L^{2}}^{2}+\sum_{i=1}^{3}\CE_{i}^{s}(\Fp)+\frac{1}{2}\|\Delta^{s}\vv\|_{L^{2}}^{2},\label{E}\\  D_{s}(\Fp,\vv)\eqdefa&\eta\|\nabla\vv\|_{L^{2}}^{2}+\frac{2\gamma^{2}}{\chi}\|\Delta\Fp\|_{L^{2}}^{2}+\frac{\gamma^{2}}{4\chi}\|\Delta^{s+1}\Fp\|_{L^{2}}^{2}+\eta\|\nabla\Delta^{s}\vv\|_{L^{2}}^{2}.\label{D}  
\end{align}
Here, $s\geq2$ is an integer, $\CF_{Bi}[\Fp]$ is given by (\ref{elastic-energy}) and $\CE_{i}^{s}(\Fp)(i=1,2,3)$ are expressed by
\begin{align*}
    \CE_{i}^{s}(\Fp)=\frac{1}{2}\Big(\gamma_{i}\|\Delta^{s}\nabla\nn_{i}\|^{2}_{L^{2}}+k_{i}\|\Delta^{s}\text{div}\nn_{i}\|_{L^{2}}^{2}+\sum_{j=1}^{3}k_{ji}\|\Delta^{s}(\nabla\times\nn_{i})\cdot\nn
_{j}\|_{L^{2}}^{2}\Big).
\end{align*}

By means of Sobolev's interpolation inequality and the definitions of functionals (\ref{E}) and (\ref{D}), there exist two constants $c_{0}>0$ and $C_{0}>0$ such that  
\begin{align*}
    c_{0}(\|\nabla\Fp\|_{H^{2s}}^{2}+\|\vv\|_{H^{2s}}^{2})\leq&E_{s}(\Fp,\vv)\leq C_{0}(\|\nabla\Fp\|_{H^{2s}}^{2}+\|\vv\|_{H^{2s}}^{2}),\\
    c_{0}(\|\Delta\Fp\|_{H^{2s}}^{2}+\|\nabla\vv\|_{H^{2s}}^{2})\leq&D_{s}(\Fp,\vv)\leq C_{0}(\|\Delta\Fp\|_{H^{2s}}^{2}+\|\nabla\vv\|_{H^{2s}}^{2}).
\end{align*}

Next, we give the following dissipative estimate, which will be used to handle the higher-order derivative terms.

\begin{lemma}[dissipative estimate]\label{lf}
For any given $\Fp=(\nn_1,\nn_2,\nn_3)\in SO(3)$, there exists a constant $C>0$ such that
   \begin{align}
       \sum_{k=1}^{3}\frac{1}{\chi_{k}}\|\ML_k\CF_{Bi}\|^{2}_{L^{2}}\geq~\frac{2\gamma^2}{\chi}\|\Delta\Fp\|_{L^{2}}^{2}-CE_{s}(\Fp,\vv)D_{s}(\Fp,\vv),
   \end{align}
where $\chi=\max\{\chi_1,\chi_2,\chi_3\}$ and $\gamma=\min\{\gamma_1,\gamma_2,\gamma_3\}$, and $\ML_k(k=1,2,3)$ are the rotational differential operators on $SO(3)$.
\end{lemma}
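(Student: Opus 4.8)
\textbf{Proof strategy for Lemma \ref{lf}.}

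The plan is to start from the representation \eqref{rotational-vd-H} of $\ML_k\CF_{Bi}$ in terms of the variational derivatives $\hh_i$, and to reverse the usual direction: rather than bounding $\ML_k\CF_{Bi}$ from above, I want a lower bound that captures the full Laplacian $\|\Delta\Fp\|_{L^2}^2$. The natural device is the orthogonal decomposition \eqref{decomposition} applied to $\hh\cdot\hh$, where $\hh=(\hh_1,\hh_2,\hh_3)$ viewed as an element of $\mathbb R^{3\times3}$. Since $\ML_k\CF_{Bi}=V_k\cdot\hh$ up to the identification in \eqref{rotational-vd-H}, the tangential part $\sum_{k}|V_k|^{-2}(\hh\cdot V_k)^2$ is exactly (a weighted version of) $\sum_k|\ML_k\CF_{Bi}|^2$, while the $W_k$-components are error terms. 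So the first step is: write $|\hh|^2 = \sum_{i}|\hh_i|^2 = \sum_k \tfrac12|\ML_k\CF_{Bi}|^2 + \sum_{k=1}^6 |W_k|^{-2}(\hh\cdot W_k)^2$, pointwise in $\xx$, and integrate. This reduces the problem to two subtasks: (i) bound $\sum_i\|\hh_i\|_{L^2}^2$ from below by $\tfrac{2\gamma^2}{\chi}\cdot\tfrac{\chi}{\text{(const)}}\|\Delta\Fp\|_{L^2}^2$ modulo lower-order terms, and (ii) show $\sum_k\|W_k\cdot\hh\|_{L^2}^2$ is absorbed into $CE_sD_s$.

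For subtask (i) I would use the explicit formula \eqref{H-representation} from Lemma \ref{h-decomposition}: $\hh_i=\gamma_i\Delta\nn_i + k_i\nabla\mathrm{div}\,\nn_i - \sum_j k_{ji}\nabla\times(\nabla\times\nn_i\cdot(\nn_j\otimes\nn_j)) - \sum_j k_{ij}(\nn_i\cdot\nabla\times\nn_j)(\nabla\times\nn_j)$. The leading term is $\gamma_i\Delta\nn_i$; the terms with $k_i,k_{ji}$ are second order but have nonnegative coefficients and, upon taking $\|\hh_i\|_{L^2}^2$ and integrating by parts, should combine with $\gamma_i^2\|\Delta\nn_i\|_{L^2}^2$ with the \emph{right sign} (this is presumably why the specific split \eqref{new-elasitic-density}--\eqref{ki-kij} was chosen, so that $W\geq0$ and its second variation is coercive); the last term is genuinely lower order — a product of $\nabla\Fp$ with $\nabla\Fp$ — hence quadratically small and controllable by $\|\nabla\Fp\|_{L^\infty}\|\nabla\Fp\|_{L^2}\lesssim E_s^{1/2}D_s^{1/2}$ type bounds after using the embeddings implied by $s\geq2$. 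The cross terms between $\gamma_i\Delta\nn_i$ and the $k$-terms need integration by parts; the cleanest route is to recognize $\sum_i\|\hh_i\|_{L^2}^2 = \|\tfrac{\delta\CF_{Bi}}{\delta\Fp}\|_{L^2}^2$ and exploit that $\CE_i^s$ with $s=0$ essentially \emph{is} the quadratic form whose associated operator is $-\hh_i$ linearized, so $\sum_i\langle\hh_i,\gamma_i\Delta\nn_i\rangle \geq$ (the quadratic form) $\geq \gamma^2\|\Delta\Fp\|_{L^2}^2 + (\text{nonneg})$, and then Cauchy--Schwarz $\|\hh_i\|_{L^2}^2\geq$ that inner product up to the small product terms. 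The factor $2$ and the $1/\chi$ versus $1/\chi_k$ bookkeeping is handled by $\tfrac{1}{\chi_k}\geq\tfrac{1}{\chi}$ and the $\tfrac12$ from the $|V_k|^2=2$ normalization.

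For subtask (ii), the point is that by the last line of \eqref{frame-differ-properties}, $W_\alpha\cdot\CD\Fp=0$ for first-order $\CD$, so the $W_k$-components of $\hh$ only see the \emph{genuinely nonlinear} (lower-order, or curvature-type) pieces of \eqref{H-representation} — the pure $\Delta\nn_i$ contributions to $W_k\cdot\hh$ vanish against the algebraic structure after integration by parts, since $\Delta\nn_i = \nabla\cdot\nabla\nn_i$ and $\nn_j\cdot\nabla\nn_i$-type contractions are first-order derivatives of the constraint $\nn_i\cdot\nn_j=\text{const}$. Concretely, $W_k\cdot\hh$ should reduce to a sum of terms each at least quadratic in $\nabla\Fp$ (schematically $\nabla\Fp\ast\nabla\Fp$ or $\Fp\ast\nabla\Fp\ast\nabla\Fp$), whence $\|W_k\cdot\hh\|_{L^2}^2 \lesssim \|\nabla\Fp\|_{L^\infty}^2\|\nabla\Fp\|_{L^2}^2 \lesssim E_s D_s$ using $\|\nabla\Fp\|_{L^\infty}\lesssim\|\nabla\Fp\|_{H^{2s}}$ (valid for $s\geq2$ in $d=2,3$) and distributing one $H^{2s}$ factor into $D_s$. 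I expect the detailed verification of subtask (ii) — showing that \emph{every} $W_k$-component, after the necessary integrations by parts against \eqref{H-representation}, is purely lower-order with no surviving $\|\Delta\Fp\|_{L^2}^2$ contribution of the wrong sign — to be the main obstacle, since it requires carefully tracking the six vectors $W_1,\dots,W_6$ against the four structurally different groups of terms in $\hh_i$; the paper defers exactly this computation to the Appendix, which I would follow, organizing it by first disposing of the $\gamma_i\Delta\nn_i$ term via the identities in \eqref{frame-differ-properties} and then treating the remaining manifestly quadratic terms by brute-force product estimates (Lemma \ref{pde}) plus Sobolev embedding.
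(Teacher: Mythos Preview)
Your overall instinct to use the orthogonal decomposition \eqref{decomposition} is right, but applying it to $\hh\cdot\hh$ leads to a gap in subtask~(ii) that I do not think can be closed. The claim that ``$W_k\cdot\hh$ should reduce to a sum of terms each at least quadratic in $\nabla\Fp$'' is correct for the leading piece $\gamma_i\Delta\nn_i$ (indeed $\nn_j\cdot\Delta\nn_i+\nn_i\cdot\Delta\nn_j=-2\nabla\nn_i\cdot\nabla\nn_j$ and $\nn_i\cdot\Delta\nn_i=-|\nabla\nn_i|^2$), but it is \emph{false} for the remaining second-order pieces of \eqref{H-representation}. For example, with $W_4=(\nn_1,0,0)$ one has $W_4\cdot\hh=\nn_1\cdot\hh_1$, and the contribution $k_1\,\nn_1\cdot\nabla\mathrm{div}\,\nn_1=(\nn_1\cdot\nabla)\mathrm{div}\,\nn_1$ is a genuine $\Fp\ast\nabla^2\Fp$ term with no $SO(3)$ identity reducing it to $\nabla\Fp\ast\nabla\Fp$; its $L^2$ norm is comparable to $\|\nabla^2\Fp\|_{L^2}$. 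The same occurs for the $k_{ji}$ curl terms. Hence $\|W_k\cdot\hh\|_{L^2}^2$ is of size $\|\Delta\Fp\|_{L^2}^2$, not $E_sD_s$, and your identity $\sum_k|\ML_k\CF_{Bi}|^2=2|\hh|^2-2\sum_k|W_k|^{-2}(W_k\cdot\hh)^2$ loses exactly the quantity you are trying to extract.

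The paper avoids this by applying \eqref{decomposition} with $B=\Delta\Fp$ rather than $B=\hh$. The point is that $\Delta\Fp\cdot W_k=\nabla\cdot(\nabla\Fp\cdot W_k)-\nabla\Fp\cdot\nabla W_k=-\nabla\Fp\cdot\nabla W_k$ \emph{is} genuinely lower order (this is where \eqref{frame-differ-properties} is used), so in the bilinear term $(A\cdot W_k)(\Delta\Fp\cdot W_k)$ only \emph{one} factor needs to be small. Concretely, the paper first takes $A=\Delta\Fp$ to get $\|\Delta\Fp\|_{L^2}^2\le\tfrac12\sum_k\|\Delta\Fp\cdot V_k\|_{L^2}^2+\text{l.o.t.}$, then takes $A=\nabla\cdot\partial_{\nabla\Fp}f_{Bi}-\gamma\Delta\Fp$ and shows (via integration by parts and the sign structure of the $k_i,k_{ji}$) that $\langle A,\Delta\Fp\rangle\ge -\text{l.o.t.}$; finally the trivial inequality $|a|^2\ge 2\gamma(a-\gamma b)b+\gamma^2b^2$ (i.e.\ $(a-\gamma b)^2\ge0$) with $a=V_k\cdot(\nabla\cdot\partial_{\nabla\Fp}f_{Bi})$ and $b=\Delta\Fp\cdot V_k$ ties these together. (Incidentally, the Appendix treats the higher-order Lemma~\ref{lfs}, not Lemma~\ref{lf}; the proof of Lemma~\ref{lf} is given in the main text.)
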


\begin{proof}
To begin with, let us recall the orthogonal decomposition (\ref{decomposition}) formed by the tangential space $T_{\Fp}SO(3)$ and its orthogonal complement. In (\ref{decomposition}), by taking $B=\Delta\Fp$ we have
 \begin{align}\label{App}
A\cdot \Delta\Fp=\sum_{k=1}^{3}\frac{1}{|V_{k}|^2}(A\cdot V_{k})(\Delta\Fp\cdot V_k)+\sum_{k=1}^{6}\frac{1}{|W_{k}|^2}(A\cdot W_{k})(\Delta\Fp\cdot W_{k}),
 \end{align}
 for any matrix $A\in\mathbb{R}^{3\times3},$ where $V_{k}(k=1,2,3)$ and $W_{k}(k=1,\cdots,6)$ are the orthogonal bases of the tangent space $T_{\Fp}SO(3)$ and its associated orthogonal complement space, respectively. According to the definitions of $W_{k}(k=1,\cdots,6),$ we have 
 \begin{align*}
     \Delta\Fp\cdot W_{k}=\nabla\cdot
     (\nabla\Fp\cdot W_{k})-\nabla\Fp\cdot\nabla W_{k}=-\nabla\Fp\cdot\nabla W_{k},
 \end{align*}
where the property (\ref{frame-differ-properties}) has been used.
 
On the other hand, taking $A=\Delta\Fp$ in (\ref{App}), we get
\begin{align}\label{45}
    \|\Delta\Fp\|_{L^{2}}^{2}\leq\frac{1}{2}\sum_{k=1}^{3}\|\Delta\Fp\cdot V_{k}\|_{L^{2}}^{2}+C\int_{\mathbb{R}^{d}}|\nabla\Fp|^2(|\nabla^2\Fp|+|\nabla\Fp|^2)\ud\xx.
\end{align}
In the same way, if $A$ in (\ref{App}) is replaced by
\begin{align*}
    A=\nabla\cdot\frac{\partial f_{Bi}}{\partial(\nabla\Fp)}-\gamma\Delta\Fp,~~\gamma=\min\{\gamma_1,\gamma_2,\gamma_3\},
\end{align*}
then we have
\begin{align}\label{46}    \int_{\mathbb{R}^{d}}&\Big(\nabla\cdot\frac{\partial f_{Bi}}{\partial(\nabla\Fp)}-\gamma\Delta\Fp\Big)\cdot\Delta\Fp\ud\xx\nonumber\\
    &\leq\frac{1}{2}\int_{\mathbb{R}^{d}}\sum_{k=1}^{3}\Big[
\Big(\nabla\cdot\frac{\partial f_{Bi}}{\partial(\nabla\Fp)}-\gamma\Delta\Fp\Big)\cdot V_{k}\Big](\Delta\Fp\cdot V_{k})\ud\xx\nonumber\\
&\quad+C\int_{\mathbb{R}^{d}}|\nabla\Fp|^{2}(|\nabla^{2}\Fp|+|\nabla\Fp|^{2})\ud\xx.
\end{align}
Furthermore, applying Lemma $\ref{h-decomposition}$ and integration by parts, we infer that
\begin{align}\label{47}
\int_{\mathbb{R}^{d}}&\Big(\nabla\cdot\frac{\partial f_{Bi}}{\partial(\nabla\Fp)}-\gamma\Delta\Fp\Big)\cdot\Delta\Fp\ud\xx\nonumber\\
= &\int_{\mathbb{R}^{d}}\Big(\nabla\cdot\frac{\partial f_{Bi}}{\partial(\nabla\Fp)}-\frac{\partial f_{Bi}}{\partial\Fp}-\gamma\Delta\Fp+\frac{\partial f_{Bi}}{\partial\Fp}\Big)\cdot\Delta\Fp\ud\xx\nonumber\\
\geq&\sum_{i=1}^{3}\int_{\mathbb{R}^{d}}\big(k_{i}|\nabla\text{div}\nn_{i}|^{2}+\sum_{j=1}^{3}k_{ji}|\nabla(\nn_{j}\cdot(\nabla\times\nn_{i}))|^{2}\big)\ud\xx\nonumber\\
&-C\int_{\mathbb{R}^{d}}|\nabla\Fp|^{2}(|\nabla^{2}\Fp|+|\nabla\Fp|^{2})\ud\xx.
\end{align}

We set $\chi=\max\{\chi_1,\chi_2,\chi_3\}$, and note that
\begin{align*}
(\hh_1,\hh_2,\hh_3)=-\Big(\frac{\delta\CF_{Bi}}{\delta\nn_1},\frac{\delta\CF_{Bi}}{\delta\nn_2},\frac{\delta\CF_{Bi}}{\delta\nn_3}\Big)=-\frac{\delta\CF_{Bi}}{\delta\Fp}=\nabla\cdot\frac{\partial f_{Bi}}{\partial(\nabla\Fp)}-\frac{\partial f_{Bi}}{\partial\Fp},
\end{align*}
where $\frac{\partial f_{Bi}}{\partial\Fp}$ are several lower-order derivative terms. Consequently, by means of the definitions of $\ML_{k}(k=1,2,3)$ and (\ref{45})--(\ref{47}), we deduce that 
\begin{align}\label{ML-estimate}
&\sum_{k=1}^{3}\frac{1}{\chi_{k}}\|\ML_k\CF_{Bi}\|^{2}_{L^{2}}=\sum_{k=1}^{3}\frac{1}{\chi_{k}}\int_{\mathbb{R}^{d}}\Big(V_{k}\cdot\frac{\delta\CF_{Bi}}{\delta\Fp}\Big)^{2}\ud\xx\nonumber\\
&\quad\geq\frac{1}{\chi}\int_{\mathbb{R}^{d}}\sum_{k=1}^{3}\Big(V_{k}\cdot\Big(\nabla\cdot\frac{\partial f_{Bi}}{\partial(\nabla\Fp)}\Big)\Big)^{2}\ud\xx-C\int_{\mathbb{R}^{d}}|\nabla\Fp|^{2}(|\nabla^{2}\Fp|+|\nabla\Fp|^{2})\ud\xx\nonumber\\
&\quad\geq\frac{2\gamma}{\chi}\int_{\mathbb{R}^{d}}\sum_{k=1}^{3}\Big[\Big(\nabla\cdot\frac{\partial f_{Bi}}{\partial(\nabla\Fp)}-\gamma\Delta\Fp\Big)\cdot V_{k}\Big](\Delta\Fp\cdot V_{k})\ud\xx\nonumber\\
&\qquad+\frac{\gamma^{2}}{\chi}\int_{\mathbb{R}^{d}}\sum_{k=1}^{3}(\Delta\Fp\cdot V_{k})^{2}\ud\xx-C\int_{\mathbb{R}^{d}}|\nabla\Fp|^{2}(|\nabla^{2}\Fp|+|\nabla\Fp|^{2})\ud\xx\nonumber\\
&\quad\geq\frac{4\gamma}{\chi}\sum_{i=1}^{3}\int_{\mathbb{R}^{d}}\Big(k_{i}|\nabla\text{div}\nn_{i}|^{2}+\sum_{j=1}^{3}k_{ji}|\nabla(\nn_{j}\cdot(\nabla\times\nn_{i}))|^{2}\Big)\ud\xx\nonumber\\
&\qquad+\frac{2\gamma^{2}}{\chi}\|\Delta\Fp\|^2_{L^{2}}-C\int_{\mathbb{R}^{d}}|\nabla\Fp|^{2}(|\nabla^{2}\Fp|+|\nabla\Fp|^{2})\ud\xx.
\end{align}
By the definitions of two functionals $E_s(\Fp,\vv)$ and $D_s(\Fp,\vv)$ in (\ref{E})--(\ref{D}), it follows that
\begin{align*}
\int_{\mathbb{R}^{d}}|\nabla\Fp|^{4}\ud\xx\leq&C\int_{\mathbb{R}^{d}}|\nabla^{2}\Fp|^{2}\ud\xx\leq CE_s(\Fp,\vv)D_s(\Fp,\vv),\nonumber\\
\int_{\mathbb{R}^{d}}|\nabla\Fp|^{2}|\nabla^{2}\Fp|\ud\xx\leq&C\int_{\mathbb{R}^{d}}|\nabla^{2}\Fp|^{2}\ud\xx\leq CE_s(\Fp,\vv)D_s(\Fp,\vv),
\end{align*}
where the fact $|\nabla\Fp|^{2}=-\Fp\cdot\Delta\Fp\leq C|\nabla^{2}\Fp|$ has been used.
Combining (\ref{ML-estimate}) with the above two estimates, we immediately obtain
\begin{align*}
 \sum_{k=1}^{3}\frac{1}{\chi_{k}}\|\ML_k\CF_{Bi}\|^{2}_{L^{2}}\geq~\frac{2\gamma^2}{\chi}\|\Delta\Fp\|_{L^{2}}^{2}-CE_{s}(\Fp,\vv)D_{s}(\Fp,\vv). 
\end{align*}
Thus, the proof of the lemma is completed.
\end{proof}

Now, we first deal with the estimates of lower order terms in (\ref{E}). From Lemma \ref{Energy dissipative law}, the dissipative inequality of the system (\ref{n1.})--(\ref{imcompressible-v}) is given by
\begin{align*}
\frac{\ud}{\ud t}\Big(\frac{1}{2}\int_{\mathbb{R}^{d}}|\vv|^{2}\ud\xx+\CF_{Bi}[\Fp]\Big)+\eta\|\nabla\vv\|_{L^{2}}^{2}+ \sum_{k=1}^{3}\frac{1}{\chi_{k}}\|\ML_k\CF_{Bi}\|^{2}_{L^{2}}\leq 0,
\end{align*}
which together with Lemma \ref{lf} implies that
\begin{align}\label{dv..}
&\frac{\ud}{\ud t}\Big(\frac{1}{2}\int_{\mathbb{R}^{d}}|\vv|^{2}\ud\xx+\CF_{Bi}[\Fp]\Big)+\eta\|\nabla\vv\|_{L^{2}}^{2}+\frac{2\gamma^{2}}{\chi}\|\Delta\Fp\|_{L^{2}}^{2}\leq CE_{s}(\Fp,\vv)D_{s}(\Fp,\vv).
\end{align}

\subsection{Estimates of higher order terms for $\vv$}
We are now in a position to estimate the higher order derivatives for $\vv$ in (\ref{E}). Acting the differential operator $\Delta^{s}$ on the equation (\ref{v.}) and taking the $L^2$-inner product with $\Delta^{s}\vv$, and noting $\nabla\cdot\vv=0$, we deduce that 
\begin{align}\label{dv}
    \frac{1}{2}\frac{\ud}{\ud t}\|\Delta^{s}\vv\|_{L^{2}}^{2}+\eta\|\nabla\Delta^{s}\vv\|_{L^{2}}^{2}=&-\langle \Delta^{s}(\vv\cdot\nabla\vv),\Delta^{s}\vv\rangle-\langle\Delta^{s}\sigma,\Delta^{s}\nabla\vv\rangle-\langle\Delta^{s}\sigma^d,\Delta^{s}\nabla\vv\rangle\nonumber\\ 
    \eqdefa&\Rmnum{1}+\Rmnum{2}+\Rmnum{3}.
\end{align}
We deal with $(\ref{dv})$ term by term. For the term $I$, it follows from Lemma \ref{pde} that 
\begin{align}\label{I-estimate}
    \Rmnum{1}&=-\langle[\Delta^{s},\vv\cdot]\nabla\vv,\Delta^{s}\vv\rangle\leq\|[\Delta^{s},\vv\cdot]\nabla\vv\|_{L^{2}}\|\Delta^{s}\vv\|_{L^{2}}\nonumber\\
    &\leq C\Big(\|\Delta^{s}\vv\|_{L^{2}}\|\nabla\vv\|_{L^{\infty}}+\|\nabla\vv\|_{L^{\infty}}\|\nabla\Delta^{s-1}\nabla\vv\|_{L^{2}}\Big)\|\Delta^{s}\vv\|_{L^{2}}\nonumber\\
    &\leq CE^{\frac{1}{2}}_{s}(\Fp,\vv)D_{s}(\Fp,\vv).
\end{align}
Recalling the definition of the stress $\sigma$, the term $II$ in (\ref{dv}) can be rewritten as
\begin{align}\label{rm2}
    \Rmnum{2}\eqdefa\Rmnum{2}_{1}+\Rmnum{2}_{2}+\Rmnum{2}_{3}+\Rmnum{2}_{4},
\end{align}
where $II_i(i=1,\cdots,4)$ are expressed by, respectively,
\begin{align*}
    \Rmnum{2}_{1}=&-\Big\langle\beta_{1}\Delta^{s}((\DD\cdot\sss_{1})\sss_{1})+\beta_{0}\Delta^{s}((\DD\cdot\sss_{1})\sss_{2})+\beta_{0}\Delta^{s}((\DD\cdot\sss_{2})\sss_{1})+\beta_{2}\Delta^{s}((\DD\cdot\sss_{2})\sss_{2}),\Delta^{s}\DD\Big\rangle,\\
    \Rmnum{2}_{2}=&-\Big\langle\Big(\beta_{3}-\frac{\eta_{3}^{2}}{\chi_{3}}\Big)\Delta^{s}((\DD\cdot\sss_{3})\sss_{3})+\Big(\beta_{4}-\frac{\eta_{2}^{2}}{\chi_{2}}\Big)\Delta^{s}((\DD\cdot\sss_{4})\sss_{4})+\Big(\beta_{5}-\frac{\eta_{1}^{2}}{\chi_{1}}\Big)\Delta^{s}((\DD\cdot\sss_{5})\sss_{5}),\Delta^{s}\DD\Big\rangle,\\
    \Rmnum{2}_{3}=&-\Big\langle\frac{\eta_{3}}{\chi_{3}}\Delta^{s}((\ML_3\CF_{Bi})\sss_{3})+\frac{\eta_{2}}{\chi_{2}}\Delta^{s}((\ML_2\CF_{Bi})\sss_{4})+\frac{\eta_{1}}{\chi_{1}}\Delta^{s}((\ML_1\CF_{Bi})\sss_{5}),\Delta^{s}\DD\Big\rangle,\\
    \Rmnum{2}_{4}=&-\frac{1}{2}\Big\langle\Delta^{s}\big((\ML_3\CF_{Bi})\aaa_{1}+(\ML_2\CF_{Bi})\aaa_{2}+(\ML_1\CF_{Bi})\aaa_{3}\big),\Delta^{s}\WW\Big\rangle.
\end{align*}
By the symmetry of $\Rmnum{2}_{1}+\Rmnum{2}_{2},$ the relation $\beta^2_0\leq\beta_1\beta_2$ in (\ref{coefficient-conditions}) and Lemma \ref{pde}, we infer that 
\begin{align}\label{II1+II2}
  \Rmnum{2}_{1}+\Rmnum{2}_{2}
  \leq&\underset{\leq0}{\underbrace{-\Big(\sum_{j=1}^{2}\beta_{j}\|\Delta^{s}\DD\cdot\sss_{j}\|_{L^{2}}^{2}+2\beta_{0}\int_{\mathbb{R}^{d}}(\Delta^{s}\DD\cdot\sss_{1})(\Delta^{s}\DD\cdot\sss_{2})\ud\xx\Big)}}\nonumber\\
 &-\Big(\beta_{3}-\frac{\eta_{3}^{2}}{\chi_{3}}\Big)\|\Delta^{s}\DD\cdot\sss_{3}\|_{L^{2}}^{2}-\Big(\beta_{4}-\frac{\eta_{2}^{2}}{\chi_{2}}\Big)\|\Delta^{s}\DD\cdot\sss_{4}\|_{L^{2}}^{2}-\Big(\beta_{5}-\frac{\eta_{1}^{2}}{\chi_{1}}\Big)\|\Delta^{s}\DD\cdot\sss_{5}\|_{L^{2}}^{2}\nonumber\\
&+C\underset{i=j~\text{if}~i,j\geq3}{\sum_{i,j=1;}^{5}} \Big(\|[\Delta^{s},\sss_{i}\cdot](\DD\cdot\sss_{j})\|_{L^{2}}+\|\sss_{i}\|_{L^{\infty}}\|[\Delta^{s},\sss_{j}\cdot]\DD\|_{L^{2}}\Big)\|\Delta^{s}\nabla\vv\|_{L^{2}}.
\end{align}
We next handle the last term in (\ref{II1+II2}). First, we have $\|\sss_i\|_{L^{\infty}}\leq C$ due to the expressions of $\sss_i$ and $\Fp=(\nn_1,\nn_2,\nn_3)\in SO(3)$. For the term $\|[\Delta^{s},\sss_{j}\cdot]\DD\|_{L^{2}}$, from Lemma \ref{pde} and the Sobolev embedding inequality, it can be estimated as
\begin{align*}
\|[\Delta^{s},\sss_{j}\cdot]\DD\|_{L^{2}}\leq C\big(\|\Delta^{s}\sss_{j}\|_{L^{2}}\|\nabla\vv\|_{L^{\infty}}+\|\nabla\sss_{j}\|_{L^{\infty}}\|\Delta^{s}\vv\|_{L^{2}}\big),
\end{align*}
where $\|\Delta^s\sss_j\|_{L^2}$ is controlled by
\begin{align*}
\|\Delta^{s}\sss_{j}\|_{L^{2}}\leq&C\|\Delta^{s}(\nn_{i}\otimes \nn_{k})\|_{L^{2}}
\leq C\big(\|[\Delta^{s},\nn_{i}\otimes]\nn_{k}\|_{L^{2}}+\|\nn_{i}\otimes\Delta^{s}\nn_{k}\|_{L^{2}}\big)\\
\leq&C\big(\|\Delta^{s}\nn_{i}\|_{L^{2}}\|\nn_{k}\|_{L^{\infty}}+\|\nabla\nn_{i}\|_{L^{\infty}}\|\nabla\Delta^{s-1}\nn_{k}\|_{L^{2}}+\|\Delta^{s}\nn_{k}\|_{L^{2}}\big)\\
\leq&C\big(\|\nabla\Fp\|_{L^{\infty}}\|\nabla\Delta^{s-1}\Fp\|_{L^{2}}+\|\Delta^{s}\Fp\|_{L^{2}}\big).
\end{align*}
Thus we have
\begin{align}\label{Delta-sD}
\|[\Delta^{s},\sss_{j}\cdot]\DD\|_{L^{2}}\leq&C\Big(\|\nabla\Fp\|_{L^{\infty}}\|\nabla\Delta^{s-1}\Fp\|_{L^{2}}\|\nabla\vv\|_{L^{\infty}}+\|\Delta^{s}\Fp\|_{L^{2}}\|\nabla\vv\|_{L^{\infty}}+\|\nabla\Fp\|_{L^{\infty}}\|\Delta^{s}\vv\|_{L^{2}}\Big)\nonumber\\
\leq&C\big(E^{\frac{1}{2}}_{s}(\Fp,\vv)+E_{s}(\Fp,\vv)\big)D^{\frac{1}{2}}_{s}(\Fp,\vv).
\end{align}
For the term $\|[\Delta^{s},\sss_{i}\cdot](\DD\cdot\sss_{j})\|_{L^{2}}$, from Lemma \ref{pde} we can estimate it as 
\begin{align*}
\|[\Delta^{s},\sss_{i}\cdot](\DD\cdot\sss_{j})\|_{L^{2}}\leq&C\big(\|\Delta^{s}\sss_{i}\|_{L^{2}}\|\DD\cdot\sss_{j}\|_{L^{\infty}}+\|\nabla\sss_{i}\|_{L^{\infty}}\|\nabla\Delta^{s-1}(\DD\cdot\sss_{j})\|_{L^{2}}\big)\\
\leq&C\big(\|\nabla\Fp\|_{L^{\infty}}\|\nabla\Delta^{s-1}\Fp\|_{L^{2}}\|\nabla\vv\|_{L^{\infty}}+\|\nabla\vv\|_{L^{\infty}}\|\Delta^{s}\Fp\|_{L^{2}}\\
&+\|\nabla\Fp\|_{L^{\infty}}\|\nabla\Delta^{s-1}(\DD\cdot\sss_{j})\|_{L^{2}}\big),
\end{align*}
where $\|\nabla\Delta^{s-1}(\DD\cdot\sss_{j})\|_{L^{2}}$ can be handled as
\begin{align*}
\|\nabla\Delta^{s-1}(\DD\cdot\sss_{j})\|_{L^{2}}\leq& C\big(\|[\nabla\Delta^{s-1},\DD\cdot]\sss_{j}\|_{L^{2}}+\|\DD\cdot\nabla\Delta^{s-1}\sss_{j}\|_{L^{2}}\big)\\
\leq&C\Big(\|\nabla\Delta^{s-1}\DD\|_{L^{2}}\|\sss_{j}\|_{L^{\infty}}+\|\nabla\DD\|_{L^{\infty}}\|\Delta^{s-1}\sss_{j}\|_{L^{2}}+\|\nabla\vv\|_{L^{\infty}}\|\nabla\Delta^{s-1}\sss_{j}\|_{L^{2}}\Big)\\
\leq&C\Big(\|\Delta^{s}\vv\|_{L^{2}}+\|\nabla^{2}\vv\|_{L^{\infty}}\big(\|\nabla\Fp\|_{L^{\infty}}\|\nabla\Delta^{s-2}\Fp\|_{L^{2}}+\|\Delta^{s-1}\Fp\|_{L^{2}}\big)\\
&+\|\nabla\vv\|_{L^{\infty}}\big(\|\nabla\Fp\|_{L^{\infty}}\|\Delta^{s-1}\Fp\|_{L^{2}}+\|\nabla\Delta^{s-1}\Fp\|_{L^{2}}\big)\Big).
\end{align*}
Then, we obtain
\begin{align}\label{Delta-sssi}
\|[\Delta^{s},\sss_{i}\cdot](\DD\cdot\sss_{j})\|_{L^{2}}\leq&C\Big(\|\nabla\vv\|_{L^{\infty}}\|\nabla\Fp\|_{L^{\infty}}\|\nabla\Delta^{s-1}\Fp\|_{L^{2}}+\|\nabla\vv\|_{L^{\infty}}\|\Delta^{s}\Fp\|_{L^{2}}+\|\nabla\Fp\|_{L^{\infty}}\|\Delta^{s}\vv\|_{L^{2}}\nonumber\\
&+\|\nabla^{2}\vv\|_{L^{\infty}}\|\nabla\Fp\|_{L^{\infty}}^{2}\|\nabla\Delta^{s-2}\Fp\|_{L^{2}}+\|\nabla^{2}\vv\|_{L^{\infty}}\|\nabla\Fp\|_{L^{\infty}}\|\Delta^{s-1}\Fp\|_{L^{2}}\nonumber\\
&+\|\nabla\vv\|_{L^{\infty}}\|\nabla\Fp\|_{L^{\infty}}^{2}\|\Delta^{s-1}\Fp\|_{L^{2}}+\|\nabla\vv\|_{L^{\infty}}\|\nabla\Fp\|_{L^{\infty}}\|\nabla\Delta^{s-1}\Fp\|_{L^{2}}\Big)\nonumber\\
\leq&C\sum^3_{k=1}E^{\frac{k}{2}}_{s}(\Fp,\vv)D^{\frac{1}{2}}_{s}(\Fp,\vv).
\end{align}
Hence, combining (\ref{II1+II2}) with the estimates (\ref{Delta-sD}) and (\ref{Delta-sssi}) leads to
\begin{align}\label{II1-2final}
    \Rmnum{2}_{1}+\Rmnum{2}_{2}\leq&C\sum^3_{k=1}E^{\frac{k}{2}}_s(\Fp,\vv)D_{s}(\Fp,\vv).
\end{align}

To estimate the term $II_3+II_4$, we introduce the following notations:
\begin{align}\label{CH}
\left\{\begin{aligned}
    \CH^{\Delta^{s}}_{1}\eqdefa~&\nn_{2}\cdot\Delta^{s}\hh_{3}-\nn_{3}\cdot\Delta^{s}\hh_{2},\\
    \CH^{\Delta^{s}}_{2}\eqdefa~&\nn_{3}\cdot\Delta^{s}\hh_{1}-\nn_{1}\cdot\Delta^{s}\hh_{3},\\
    \CH^{\Delta^{s}}_{3}\eqdefa~&\nn_{1}\cdot\Delta^{s}\hh_{2}-\nn_{2}\cdot\Delta^{s}\hh_{1}.
    \end{aligned}\right.
\end{align}
Similar arguments can be applied to control the term $II_3+II_4$ in (\ref{rm2}). By reserving higher-order derivative terms and applying Lemma \ref{pde}, one can get that
\begin{align*}
\Rmnum{2}_{3}+\Rmnum{2}_{4}=&-\Big\langle\frac{\eta_{3}}{\chi_{3}}(\CH^{\Delta^{s}}_{3})\sss_{3}+\frac{\eta_{2}}{\chi_{2}}(\CH^{\Delta^{s}}_{2})\sss_{4}+\frac{\eta_{1}}{\chi_{1}}(\CH^{\Delta^{s}}_{1})\sss_{5},\Delta^{s}\DD\Big\rangle\\
&-\frac{1}{2}\Big\langle(\CH^{\Delta^{s}}_{3})\aaa_{1}+(\CH^{\Delta^{s}}_{2})\aaa_{2}+(\CH^{\Delta^{s}}_{1})\aaa_{3},\Delta^{s}\WW\Big\rangle\\
&-\Big\langle\frac{\eta_{3}}{\chi_{3}}([\Delta^{s},\nn_{1}\cdot]\hh_{2}-[\Delta^{s},\nn_{2}\cdot]\hh_{1})\sss_{3}+\frac{\eta_{2}}{\chi_{2}}([\Delta^{s},\nn_{3}\cdot]\hh_{1}-[\Delta^{s},\nn_{1}\cdot]\hh_{3})\sss_{4}\\
&\quad+\frac{\eta_{1}}{\chi_{1}}([\Delta^{s},\nn_{2}\cdot]\hh_{3}-[\Delta^{s},\nn_{3}\cdot]\hh_{2})\sss_{5},\Delta^{s}\DD\Big\rangle\\
&-\frac{1}{2}\Big\langle([\Delta^{s},\nn_{1}\cdot]\hh_{2}-[\Delta^{s},\nn_{2}\cdot]\hh_{1})\aaa_{1}+([\Delta^{s},\nn_{3}\cdot]\hh_{1}-[\Delta^{s},\nn_{1}\cdot]\hh_{3})\aaa_{2}\\
&\quad+([\Delta^{s},\nn_{2}\cdot]\hh_{3}-[\Delta^{s},\nn_{3}\cdot]\hh_{2})\aaa_{3},\Delta^{s}\WW\Big\rangle\\
\leq&-\Big\langle\frac{\eta_{3}}{\chi_{3}}(\CH^{\Delta^{s}}_{3})\sss_{3}+\frac{\eta_{2}}{\chi_{2}}(\CH^{\Delta^{s}}_{2})\sss_{4}+\frac{\eta_{1}}{\chi_{1}}(\CH^{\Delta^{s}}_{1})\sss_{5},\Delta^{s}\DD\Big\rangle\\
&-\frac{1}{2}\Big\langle(\CH^{\Delta^{s}}_{3})\aaa_{1}+(\CH^{\Delta^{s}}_{2})\aaa_{2}+(\CH^{\Delta^{s}}_{1})\aaa_{3},\Delta^{s}\WW\Big\rangle\\
&+C\sum^3_{i,j=1;i\neq j}\|[\Delta^s, \nn_i\cdot]\hh_j\|_{L^2}\|\Delta^s\nabla\vv\|_{L^2},
\end{align*}
where $\|[\Delta^s, \nn_i\cdot]\hh_j\|_{L^2}$ can be estimated as
\begin{align}\label{Delta-s-nni-hhj}
\|[\Delta^{s},\nn_{i}\cdot]\hh_{j}\|_{L^{2}}\leq&C\big(\|\Delta^{s}\nn_{i}\|_{L^{2}}\|\hh_{j}\|_{L^{\infty}}+\|\nabla\nn_{i}\|_{L^{\infty}}\|\Delta^{s-1}\nabla\hh_{j}\|_{L^{2}}\big)\nonumber\\
\leq&C\sum^3_{k=1}E^{\frac{k}{2}}_{s}(\Fp,\vv)D^{\frac{1}{2}}_{s}(\Fp,\vv).
\end{align}
Here, it should be noted that the following estimates have been employed in (\ref{Delta-s-nni-hhj}):
\begin{align*}
    \|\hh_{j}\|_{L^{\infty}}\leq& C\big(\|\Delta\Fp\|_{L^{\infty}}+\|\nabla\Fp\|^{2}_{L^{\infty}}\big)
    \leq C\sum^2_{k=1}E^{\frac{k}{2}}_{s}(\Fp,\vv),\\
    \|\Delta^{s-1}\nabla\hh_{j}\|_{L^{\infty}}\leq&C\Big(\|\Delta^{s}\nabla\Fp\|_{L^{2}}+\|\nabla^{2}\Fp\|_{L^{\infty}}\|\Delta^{s-1}\nabla\Fp\|_{L^{2}}\\
    &+\|\nabla\Fp\|_{L^{\infty}}\|\Delta^{s}\Fp\|_{L^{2}}+\|\nabla\Fp\|^{2}_{L^{\infty}}\|\Delta^{s-1}\nabla\Fp\|_{L^{2}}\Big)\\
    \leq&C\sum^3_{k=1}E^{\frac{k}{2}}_{s}(\Fp,\vv).
\end{align*}
Consequently, combining the above inequalities yields
\begin{align}\label{II3+II4}
 \Rmnum{2}_{3}+\Rmnum{2}_{4}\leq&-\Big\langle\frac{\eta_{3}}{\chi_{3}}(\CH^{\Delta^{s}}_{3})\sss_{3}+\frac{\eta_{2}}{\chi_{2}}(\CH^{\Delta^{s}}_{2})\sss_{4}+\frac{\eta_{1}}{\chi_{1}}(\CH^{\Delta^{s}}_{1})\sss_{5},\Delta^{s}\DD\Big\rangle\nonumber\\
&-\frac{1}{2}\Big\langle(\CH^{\Delta^{s}}_{3})\aaa_{1}+(\CH^{\Delta^{s}}_{2})\aaa_{2}+(\CH^{\Delta^{s}}_{1})\aaa_{3},\Delta^{s}\WW\Big\rangle\nonumber\\
&+C\sum^3_{k=1}E^{\frac{k}{2}}_{s}(\Fp,\vv)D_{s}(\Fp,\vv).
\end{align}
It follows from (\ref{II1-2final}) and (\ref{II3+II4}) that
\begin{align}\label{II-expression-f}
  \Rmnum{2}\leq&-\Big\langle\frac{\eta_{3}}{\chi_{3}}(\CH^{\Delta^{s}}_{3})\sss_{3}+\frac{\eta_{2}}{\chi_{2}}(\CH^{\Delta^{s}}_{2})\sss_{4}+\frac{\eta_{1}}{\chi_{1}}(\CH^{\Delta^{s}}_{1})\sss_{5},\Delta^{s}\DD\Big\rangle\nonumber\\
&-\frac{1}{2}\Big\langle(\CH^{\Delta^{s}}_{3})\aaa_{1}+(\CH^{\Delta^{s}}_{2})\aaa_{2}+(\CH^{\Delta^{s}}_{1})\aaa_{3},\Delta^{s}\WW\Big\rangle\nonumber\\
&+C\sum^3_{k=1}E^{\frac{k}{2}}_{s}(\Fp,\vv)D_{s}(\Fp,\vv).  
\end{align}

It remains to control the term $\Rmnum{3}$.  Using the definition of the stress $\sigma^d$ in (\ref{sigma-d}), one can obtain
\begin{align}\label{III-expression}
\Rmnum{3}\leq&\|\Delta^s\sigma^{d}\|_{L^{2}}\|\Delta^{s}\nabla\vv\|_{L^{2}}\nonumber\\
\leq&C\bigg(\sum^3_{\alpha=1}\|\Delta^{s}(\partial_jn_{\alpha k}\partial_in_{\alpha k})\|_{L^{2}}+\sum^3_{\alpha=1}\|\Delta^{s}((\nabla\cdot\nn_{\alpha})\partial_in_{\alpha j})\|_{L^{2}}\nonumber\\
&+\sum^3_{\alpha,\beta=1}\Big(\|\Delta^{s}((\partial_jn_{\alpha p}-\partial_pn_{\alpha j})\partial_in_{\alpha p})\|_{L^{2}}\nonumber\\
&\quad+\|\Delta^{s}\{n_{\beta j}n_{\beta l}(\partial_pn_{\alpha l}-\partial_ln_{\alpha p})\partial_in_{\alpha p}\}\|_{L^{2}}\nonumber\\
&\quad+\|\Delta^{s}\{n_{\beta p}n_{\beta l}(\partial_ln_{\alpha j}-\partial_jn_{\alpha l})\partial_in_{\alpha p}\}\|_{L^{2}}\Big)
\bigg)\|\Delta^{s}\nabla\vv\|_{L^{2}}.
\end{align}
Reusing Lemma \ref{pde} and the Sobolev embedding inequality, it follows that
\begin{align*}
   \|\Delta^{s}(\partial_jn_{\alpha k}\partial_in_{\alpha k})\|_{L^{2}}
   \leq&C\|\Delta^{s}(\partial_jn_{\alpha k})\|_{L^{2}}\|\partial_in_{\alpha k}\|_{L^{\infty}}\\
   &+\|\partial_jn_{\alpha k}\|_{L^{\infty}}\|\Delta^{s}(\partial_in_{\alpha k})\|_{L^{2}}\\
   \leq& C\|\nabla\Fp\|_{L^{\infty}}\|\Delta^{s}\nabla\Fp\|_{L^{2}}
   \leq CE^{\frac{1}{2}}_{s}(\Fp,\vv)D^{\frac{1}{2}}_{s}(\Fp,\vv),\\
    \|\Delta^{s}((\nabla\cdot\nn_{\alpha})\partial_in_{\alpha j})\|_{L^{2}}\leq& C\|\nabla\Fp\|_{L^{\infty}}\|\Delta^{s}\nabla\Fp\|_{L^{2}}\le CE^{\frac{1}{2}}_{s}(\Fp,\vv)D^{\frac{1}{2}}_{s}(\Fp,\vv),\\
    \|\Delta^{s}((\partial_jn_{\alpha p}-\partial_pn_{\alpha j})\partial_in_{\alpha p})\|_{L^{2}}\leq& C\|\nabla\Fp\|_{L^{\infty}}\|\Delta^{s}\nabla\Fp\|_{L^{2}}\leq CE^{\frac{1}{2}}_{s}(\Fp,\vv)D^{\frac{1}{2}}_{s}(\Fp,\vv).
\end{align*}
By a similar argument, we obtain the following estimate:
\begin{align*}
    &\|\Delta^{s}\{n_{\beta j}n_{\beta l}(\partial_pn_{\alpha l}-\partial_ln_{\alpha p})\partial_in_{\alpha p}\}\|_{L^{2}}\nonumber\\
    &\quad\leq\|n_{\beta l}(\partial_pn_{\alpha l}-\partial_ln_{\alpha p})\partial_in_{\alpha p}\|_{L^{\infty}}\|\Delta^{s}n_{\beta j}\|_{L^{2}}\nonumber\\
    &\qquad+\|n_{\beta j}(\partial_pn_{\alpha l}-\partial_ln_{\alpha p})\partial_in_{\alpha p}\|_{L^{\infty}}\|\Delta^{s}n_{\beta l}\|_{L^{2}}\nonumber\\
    &\qquad+\|n_{\beta j}n_{\beta l}\partial_in_{\alpha p}\|_{L^{\infty}}\|\Delta
    ^{s}(\partial_pn_{\alpha l}-\partial_ln_{\alpha p})\|_{L^{2}}\nonumber\\
    &\qquad+\|n_{\beta j}n_{\beta l}(\partial_pn_{\alpha l}-\partial_ln_{\alpha p})\|_{L^{\infty}}\|\Delta^{s}(\partial_in_{\alpha p})\|_{L^{2}}\nonumber\\
    &\quad\leq C(\|\nabla\Fp\|^{2}_{L^{\infty}}\|\Delta^{s}\Fp\|_{L^{2}}+\|\nabla\Fp\|_{L^{\infty}}\|\Delta^{s}\nabla\Fp\|_{L^{2}})\nonumber\\
    &\quad\leq C(E^{\frac{1}{2}}_{s}(\Fp,\vv)+E_{s}(\Fp,\vv))D^{\frac{1}{2}}_{s}(\Fp,\vv).
\end{align*}
Further, we also have
\begin{align*}
    \|\Delta^{s}\{n_{\beta p}n_{\beta l}(\partial_ln_{\alpha j}-\partial_jn_{\alpha l})\partial_in_{\alpha p}\}\|_{L^{2}}\leq C\Big(E^{\frac{1}{2}}_{s}(\Fp,\vv)+E_{s}(\Fp,\vv)\Big)D^{\frac{1}{2}}_{s}(\Fp,\vv).
\end{align*}
Then, substituting the above estimates into (\ref{III-expression}) yields 
\begin{align}\label{III-f}
    \Rmnum{3}\leq C\Big(E^{\frac{1}{2}}_{s}(\Fp,\vv)+E_{s}(\Fp,\vv)\Big)D_{s}(\Fp,\vv).
\end{align}
Therefore, combining (\ref{dv}) with (\ref{I-estimate}), (\ref{II-expression-f}) and (\ref{III-f}), we immediately obtain 
\begin{align}\label{dv.}
&\frac{1}{2}\frac{\ud}{\ud t}\|\Delta^{s}\vv\|_{L^{2}}^{2}+\eta\|\nabla\Delta^{s}\vv\|_{L^{2}}^{2}\nonumber\\
&\quad\leq-\Big\langle\frac{\eta_{3}}{\chi_{3}}(\CH^{\Delta^{s}}_{3})\sss_{3}+\frac{\eta_{2}}{\chi_{2}}(\CH^{\Delta^{s}}_{2})\sss_{4}+\frac{\eta_{1}}{\chi_{1}}(\CH^{\Delta^{s}}_{1})\sss_{5},\Delta^{s}\DD\Big\rangle\nonumber\\
&\qquad-\frac{1}{2}\Big\langle(\CH^{\Delta^{s}}_{3})\aaa_{1}+(\CH^{\Delta^{s}}_{2})\aaa_{2}+(\CH^{\Delta^{s}}_{1})\aaa_{3},\Delta^{s}\WW\Big\rangle\nonumber\\
&\qquad+C\sum^3_{k=1}E^{\frac{k}{2}}_{s}(\Fp,\vv)D_{s}(\Fp,\vv).
\end{align}

\subsection{Estimates of higher order terms for the frame $\Fp$}

We now deal with the estimate of higher-order derivatives for the frame $\Fp=(\nn_{1},\nn_{2},\nn_{3})$. First of all, using the equation $(\ref{n1.})$ and integrating by parts, one has 
\begin{align}\label{314}
&\frac{1}{2}\frac{\ud}{\ud t}\langle\nabla\Delta^{s}\nn_{1},\nabla\Delta^{s}\nn_{1}\rangle\nonumber\\
&\quad=-\Big\langle\Delta^{s}\Big[\Big(\frac{1}{2}\WW\cdot\aaa_{1}+\frac{\eta_{3}}{\chi_{3}}\DD\cdot\sss_{3}\Big)\nn_{2}\Big],\Delta^{s+1}\nn_{1}\Big\rangle+\frac{1}{\chi_{3}}\big\langle\Delta^{s}\big((\ML_3\CF_{Bi})\nn_{2}\big),\Delta^{s+1}\nn_{1}\big\rangle\nonumber\\
&\qquad+\Big\langle\Delta^{s}\Big[(\frac{1}{2}\WW\cdot\aaa_{2}+\frac{\eta_{2}}{\chi_{2}}\DD\cdot\sss_{4})\nn_{3}\Big],\Delta^{s+1}\nn_{1}\Big\rangle-\frac{1}{\chi_{2}}\big\langle\Delta^{s}\big((\ML_2\CF_{Bi})\nn_{3}\big),\Delta^{s+1}\nn_{1}\big\rangle\nonumber\\
&\qquad-\langle\nabla\Delta^{s}(\vv\cdot\nabla\nn_{1}),\nabla\Delta^{s}\nn_{1}\rangle\nonumber\\
&\quad\eqdefa\Rmnum{1}_{1}+\Rmnum{1}_{2}+\Rmnum{1}_{3}+\Rmnum{1}_{4}+\Rmnum{1}_{5}.
\end{align}
In the same way, we immediately obtain
\begin{align}
&\frac{1}{2}\frac{\ud}{\ud t}\langle\Delta^{s}\text{div}\nn_{1},\Delta^{s}\text{div}\nn_{1}\rangle\nonumber\\
&\quad=-\Big\langle\Delta^{s}\Big[\Big(\frac{1}{2}\WW\cdot\aaa_{1}+\frac{\eta_{3}}{\chi_{3}}\DD\cdot\sss_{3}\Big)\nn_{2}\Big],\Delta^{s}\nabla\text{div}\nn_{1}\Big\rangle+\frac{1}{\chi_{3}}\langle\Delta^{s}\big((\ML_3\CF_{Bi})\nn_{2}\big),\Delta^{s}\nabla\text{div}\nn_{1}\rangle\nonumber\\
&\qquad+\Big\langle\Delta^{s}\Big[\Big(\frac{1}{2}\WW\cdot\aaa_{2}+\frac{\eta_{2}}{\chi_{2}}\DD\cdot\sss_{4}\Big)\nn_{3}\Big],\Delta^{s}\nabla\text{div}\nn_{1}\Big\rangle-\frac{1}{\chi_{2}}\langle\Delta^{s}\big((\ML_2\CF_{Bi})\nn_{3}\big),\Delta^{s}\nabla\text{div}\nn_{1}\rangle\nonumber\\
&\qquad-\langle\Delta^s\text{div}(\vv\cdot\nabla\nn_1),\Delta^s\text{div}\nn_1\rangle\nonumber\\
&\quad\eqdefa\widetilde{\Rmnum{1}}_{1}+\widetilde{\Rmnum{1}}_{2}+\widetilde{\Rmnum{1}}_{3}+\widetilde{\Rmnum{1}}_{4}+\widetilde{\Rmnum{1}}_{5}.
\end{align}

Using $\nabla\cdot\vv=0$ and Lemma \ref{pde}, we deduce that 
\begin{align}\label{316}
\Rmnum{1}_{5}+\widetilde{\Rmnum{1}}_{5}=&-\langle[\nabla\Delta^{s},\vv\cdot]\nabla\nn_{1},\nabla\Delta^{s}\nn_{1}\rangle-\langle[\Delta^{s}\text{div},\vv\cdot]\nabla\nn_{1},\Delta^{s}\text{div}\nn_{1}\rangle\nonumber\\
\leq&\|[\nabla\Delta^{s},\vv\cdot]\nabla\nn_{1}\|_{L^{2}}\|\nabla\Delta^{s}\nn_{1}\|_{L^{2}}+\|[\Delta^{s}\text{div},\vv\cdot]\nabla\nn_{1}\|_{L^{2}}\|\Delta^{s}\text{div}\nn_{1}\|_{L^{2}}\nonumber\\
\leq&C\big(\|\Delta^{s}\nabla\vv\|_{L^{2}}\|\nabla\Fp\|_{L^{\infty}}+\|\nabla\vv\|_{L^{\infty}}\|\Delta^{s}\nabla\Fp\|_{L^{2}}\big)\|\Delta^{s}\nabla\Fp\|_{L^{2}}\nonumber\\
\leq&CE^{\frac{1}{2}}_{s}(\Fp,\vv)D_{s}(\Fp,\vv).
\end{align}
From the equation (\ref{n1.}) and the expressions of $\hh_i$, one has
\begin{align*}
\|\partial_{t}\nn_{1}\|_{L^{\infty}}
\leq&C\Big(\|\nabla\vv\|_{L^{\infty}}\|\Fp\|_{L^{\infty}}^{3}+\|(\ML_3\CF_{B{i}})\nn_{2}\|_{L^{\infty}}\nonumber\\
&+\|(\ML_2\CF_{B{i}})\nn_{3}\|_{L^{\infty}}+\|\vv\|_{L^{\infty}}\|\nabla\nn_{1}\|_{L^{\infty}}\Big)\nonumber\\
\leq&C\Big(\|\nabla\vv\|_{L^{\infty}}+\|\vv\|_{L^{\infty}}\|\nabla\Fp\|_{L^{\infty}}+\|\nabla\Fp\|_{L^{\infty}}^{2}+\|\Delta\Fp\|_{L^{\infty}}\Big)\nonumber\\
\leq&C\big(E^{\frac{1}{2}}_{s}(\Fp,\vv)+E_{s}(\Fp,\vv)\big).
\end{align*}
Similar to the above derivation, we conclude that
\begin{align}\label{partial n1}
\|\partial_{t}\nn_{i}\|_{L^{\infty}}\leq&C\big(E^{\frac{1}{2}}_{s}(\Fp,\vv)+E_{s}(\Fp,\vv)\big),\quad i=1,2,3. 
\end{align}
By a direct calculation, for $j=1,2,3$, we infer from (\ref{partial n1}) that 
\begin{align}\label{nn-j-Delta-snn1}
&\frac{1}{2}\frac{\ud}{\ud t}\|\nn_{j}\cdot\Delta^{s}(\nabla\times\nn_{1})\|_{L^{2}}^{2}\nonumber\\
&\quad=\Big\langle\partial_{t}\nn_{j}\cdot\Delta^{s}(\nabla\times\nn_{1})+\nn_{j}\cdot(\Delta^{s}\nabla\times\dot{\nn}_{1}), \nn_{j}\cdot\Delta^{s}(\nabla\times\nn_{1})\Big\rangle\nonumber\\
&\qquad-\big\langle\nn_{j}\cdot(\Delta^{s}\nabla\times(\vv\cdot\nabla\nn_{1})),\nn_{j}\cdot\Delta^{s}(\nabla\times\nn_{1})\big\rangle\nonumber\\
&\quad\leq\|\partial_{t}\nn_{j}\|_{L^{\infty}}\|\nn_{j}\|_{L^{\infty}}\|\Delta^{s}\nabla\nn_{1}\|_{L^{2}}^{2}\nonumber\\
&\qquad+\big\langle\nn_{j}\cdot(\Delta^{s}\nabla\times\dot{\nn}_{1}),\nn_{j}\cdot\Delta^{s}(\nabla\times\nn_{1})\big\rangle\nonumber\\
&\qquad-\Big\langle\nn_{j}\cdot(\Delta^{s}\nabla\times(\vv\cdot\nabla\nn_{1})),\nn_{j}\cdot\Delta^{s}(\nabla\times\nn_{1})\Big\rangle\nonumber\\
&\quad\leq C\big(E^{\frac{1}{2}}_s(\Fp,\vv)+E_s(\Fp,\vv)\big)D_s(\Fp,\vv)\nonumber\\
&\qquad+\langle\nn_{j}\cdot(\Delta^{s}\nabla\times\dot{\nn}_{1}),\nn_{j}\cdot\Delta^{s}(\nabla\times\nn_{1})\rangle\nonumber\\
&\qquad-\Big\langle\nn_{j}\cdot(\Delta^{s}\nabla\times(\vv\cdot\nabla\nn_{1})),\nn_{j}\cdot\Delta^{s}(\nabla\times\nn_{1})\Big\rangle,
\end{align}
where $\dot{\nn}_1=(\partial_t+\vv\cdot\nabla)\nn_1$. The last term in (\ref{nn-j-Delta-snn1}) can be estimated as 
\begin{align*}
&-\langle\nn_{j}\cdot(\Delta^{s}\nabla\times(\vv\cdot\nabla\nn_{1})),\nn_{j}\cdot\Delta^{s}(\nabla\times\nn_{1})\rangle\\
&\quad\leq\|\nn_{j}\cdot(\Delta^{s}\nabla\times(\vv\cdot\nabla\nn_{1}))\|_{
L^{2}}\|\nn_{j}\cdot\Delta^{s}(\nabla\times\nn_{1})\|_{L^{2}}\\
&\quad\leq C\|(\Delta^{s}\nabla\times(\vv\cdot\nabla\nn_{1}))\|_{L^{2}}\|\Delta^{s}\nabla\nn_{1}\|_{L^{2}}\\
&\quad\leq C\Big(\|\nabla\nn_{1}\|_{L^{\infty}}\|\Delta^{s}\nabla\vv\|_{L^{2}}+\|\vv\|_{L^{\infty}}\|\Delta^{s+1}\nn_{1}\|_{L^{2}}
\Big)\|\Delta^s\nabla\nn_1\|_{L^2}\\
&\quad\leq CE^{\frac{1}{2}}_s(\Fp,\vv)D_s(\Fp,\vv).
\end{align*}
Consequently, using (\ref{nn-j-Delta-snn1}) and the above inequality, one can get that
\begin{align}\label{J}
\frac{1}{2}\frac{\ud}{\ud t}\|\nn_{j}\cdot\Delta^{s}(\nabla\times\nn_{1})\|_{L^{2}}^{2}
\leq &C\big(E^{\frac{1}{2}}_{s}(\Fp,\vv)+E_{s}(\Fp,\vv)\big)D_{s}(\Fp,\vv)\nonumber\\
&+\underset{J}{\underbrace{\big\langle\nn_{j}\cdot(\Delta^{s}\nabla\times\dot{\nn}_{1}),\nn_{j}\cdot\Delta^{s}(\nabla\times\nn_{1})\big\rangle}}.
\end{align}

We are now ready to handle the term $J$ in (\ref{J}). By means of the equation (\ref{n1.}), the term $J$ can be further expressed as 
\begin{align}\label{J-expression}
J=&\big\langle\Delta^{s}\nabla\times\dot{\nn}_{1},(\nn_{j}\otimes\nn_j)\cdot\Delta^{s}(\nabla\times\nn_{1})\big\rangle\nonumber\\
=&\big\langle\Delta^{s}\nabla\times\dot{\nn}_{1},\Delta^{s}(\nabla\times\nn_{1}\cdot(\nn_{j}\otimes\nn_j))\big\rangle-\big\langle\Delta^{s}\nabla\times\dot{\nn}_{1},[\Delta^{s},(\nn_{j}\otimes\nn_j)](\nabla\times\nn_{1})\big\rangle\nonumber\\
=&\big\langle\Delta^{s}\dot{\nn}_{1},\Delta^{s}\nabla\times(\nabla\times\nn_{1}\cdot(\nn_{j}\otimes\nn_j))\big\rangle-\big\langle\Delta^{s}\nabla\times\dot{\nn}_{1},[\Delta^{s},(\nn_{j}\otimes\nn_j)](\nabla\times\nn_{1})\big\rangle\nonumber\\
=&\underset{J_{1}}{\underbrace{\Big\langle\Delta^{s}\Big[\Big(\frac{1}{2}\WW\cdot\aaa_{1}+\frac{\eta_{3}}{\chi_{3}}\DD\cdot\sss_{3}\Big)\nn_{2}\Big],\Delta^{s}\nabla\times(\nabla\times\nn_{1}\cdot(\nn_{j}\otimes\nn_j))\Big\rangle} }\nonumber\\
&\underset{J_{2}}{\underbrace{-\frac{1}{\chi_{3}}\Big\langle\Delta^{s}(
(\ML_3\CF_{B_{i}})\nn_{2}),\Delta^{s}\nabla\times(\nabla\times\nn_{1}\cdot(\nn_{j}\otimes\nn_j))\Big\rangle}}\nonumber\\
&\underset{J_{3}}{\underbrace{-\Big\langle\Delta^{s}\Big[\Big(\frac{1}{2}\WW\cdot\aaa_{2}+\frac{\eta_{2}}{\chi_{2}}\DD\cdot\sss_{4}\Big)\nn_{3}\Big],\Delta^{s}\nabla\times(\nabla\times\nn_{1}\cdot(\nn_{j}\otimes\nn_j))\Big\rangle} }\nonumber\\
&\underset{J_{4}}{\underbrace{+\frac{1}{\chi_{2}}\Big\langle\Delta^{s}(
(\ML_2\CF_{B_{i}})\nn_{3}),\Delta^{s}\nabla\times(\nabla\times\nn_{1}\cdot(\nn_{j}\otimes\nn_j))\Big\rangle}}\nonumber\\
&\underset{J_{5}}{\underbrace{-\big\langle\Delta^{s}\dot{\nn}_{1},\nabla\times([\Delta^{s},(\nn_{j}\otimes\nn_j)](\nabla\times\nn_{1}))\big\rangle}}.
\end{align}
Now we estimate (\ref{J-expression}) term by term as follows. To begin with, we handle the term $J_5$. For this purpose, for $i\neq j$, armed with Lemma \ref{pde} we estimate the following terms related to $\nabla\vv$:
\begin{align*}
    \|\Delta^{s}(\WW\cdot\aaa_{j})\|_{L^{2}}&\leq C(\|[\Delta^{s},\aaa_{j}\cdot]\WW\|_{L^{2}}+\|\aaa_{j}\cdot\Delta^{s}\WW\|_{L^{2}})\\
    &\leq C(\|\nabla^{2s}\aaa_{j}\|_{L^{2}}\|\WW\|_{L^{\infty}}+\|\nabla\aaa_{j}\|_{L^{\infty}}\|\nabla^{2s-1}\WW\|_{L^{2}}+\|\Delta^{s}\nabla\vv\|_{L^{2}})\\
    &\leq C(\|\nabla^{2s}\Fp\|_{L^{2}}\|\nabla\vv\|_{H^{2}}+\|\nabla\Fp\|_{H^{2}}\|\nabla^{2s}\vv\|_{L^{2}}+\|\Delta^{s}\nabla\vv\|_{L^{2}})\\
    &\leq \big(1+E^{\frac{1}{2}}_{s}(\Fp,\vv)\big)D^{\frac{1}{2}}_{s}(\Fp,\vv),\\
    \|\Delta^{s}(\DD\cdot\sss_{j})\|_{L^{2}}&\leq C(\|[\Delta^{s},\sss_{j}\cdot]\DD\|_{L^{2}}+\|\sss_{j}\cdot\Delta^{s}\DD\|_{L^{2}})\\
    &\leq C(\|\nabla^{2s}\Fp\|_{L^{2}}\|\nabla\vv\|_{H^{2}}+\|\nabla\Fp\|_{H^{2}}\|\nabla^{2s}\vv\|_{L^{2}}+\|\Delta^{s}\nabla\vv\|_{L^{2}})\\
    &\leq C\big(1+E^{\frac{1}{2}}_{s}(\Fp,\vv)\big)D^{\frac{1}{2}}_{s}(\Fp,\vv),\\
    \|[\Delta^{s},\nn_{i}]\WW\cdot\aaa_{j}\|_{L^{2}}
    &\leq C\Big(\|\nabla^{2s}\nn_{i}\|_{L^{2}}\|\WW\cdot\aaa_{j}\|_{L^{\infty}}+\|\nabla\nn_{i}\|_{L^{\infty}}\|\nabla^{2s-1}(\WW\cdot\aaa_{j})\|_{L^{2}}\Big)\\
    &\leq C\Big(\|\nabla^{2s}\Fp\|_{L^{2}}\|\nabla\vv\|_{H^{2}}+\|\nabla\Fp\|_{H^{2}}(\|\nabla\vv\|_{L^{\infty}}\|\nabla^{2s-1}\Fp\|_{L^{2}}+\|\nabla^{2s}\vv\|_{L^{2}})\Big)\\
    &\leq C\big(E^{\frac{1}{2}}_{s}(\Fp,\vv)+E_{s}(\Fp,\vv)\big)D^{\frac{1}{2}}_{s}(\Fp,\vv),\\
 \|[\Delta^{s},\nn_{i}]\DD\cdot\aaa_{j}\|_{L^{2}}
    &\leq C\Big(\|\nabla^{2s}\nn_{i}\|_{L^{2}}\|\DD\cdot\aaa_{j}\|_{L^{\infty}}+\|\nabla\nn_{i}\|_{L^{\infty}}\|\nabla^{2s-1}(\DD\cdot\aaa_{j})\|_{L^{2}}\Big)\\
    &\leq C\big(E^{\frac{1}{2}}_{s}(\Fp,\vv)+E_{s}(\Fp,\vv)\big)D^{\frac{1}{2}}_{s}(\Fp,\vv).
\end{align*}
Again, for $j\neq k$, from (\ref{Delta-s-nni-hhj}) and Lemma \ref{pde} we estimate the following terms related to $\ML_k\CF_{Bi}$:
\begin{align*}
    \|[\Delta^{s},\nn_{j}]\ML_{k}\CF_{Bi}\|_{L^{2}}
    \leq& C\Big(\|\nabla^{2s+2}\Fp\|_{L^{2}}\|\ML_{k}\CF_{Bi}\|_{L^{\infty}}+\|\nabla\nn_{j}\|_{L^{\infty}}\|\nabla^{2s+1}(\ML_{k}\CF_{Bi})\|_{L^{2}}\Big)\\
    \leq&C\Big(\|\nabla^{2s+2}\Fp\|_{L^2}\|\hh_j\|_{L^{\infty}}+\|\nabla\nn_j\|_{L^{\infty}}\|\nabla^{2s+1}(\nn_i\cdot\hh_j)\|_{L^2}\Big)\\
    \leq&C\Big(\|\nabla^{2s+2}\Fp\|_{L^2}\|\hh_j\|_{H^{2}}
    +\|\nabla\Fp\|_{H^{2}}(\|[\nabla^{2s+1},\nn_{i}\cdot]\hh_{j}\|_{L^{2}}+\|\nn_{i}\cdot\nabla^{2s+1}\hh_{j}\|_{L^{2}})\Big)\\
    \leq&C\Big(\|\nabla^{2s+2}\Fp\|_{L^2}(\|\Delta\Fp\|_{L^{2}}+\|\Delta^{s}\Fp\|_{L^{2}}+\|\nabla\Fp\|^{2}_{L^{2}})\\
    &+\|\nabla\Fp\|_{H^{2}}(\|\nabla^{2s+1}\nn_{i}\|_{L^{2}}\|\hh_{j}\|_{L^{\infty}}+\|\nabla\nn_{i}\|_{L^{\infty}}\|\nabla^{2s+1}\hh_{j}\|_{L^{2}}+\|\nabla^{2s+1}\hh_{j}\|_{L^{2}})\Big)\\
    \leq&C\Big(1+\sum^3_{k=1}E^{\frac{k}{2}}_{s}(\Fp,\vv)\Big)D^{\frac{1}{2}}_{s}(\Fp,\vv),\\
    \|\Delta^{s}(\ML_k\CF_{B{i}})\|_{L^{2}}=&\Big\|\nn_{i}\cdot\Delta^{s}\hh_{j}-\nn_{j}\cdot\Delta^{s}\hh_{i}
+[\Delta^{s},\nn_{i}\cdot]\hh_{j}+[\Delta^{s},\nn_{j}\cdot]\hh_{i}\Big\|_{L^{2}}\\
\leq&C\big(\|\CH_{3}^{\Delta^{s}}\|_{L^{2}}+\|[\Delta^{s},\nn_{i}\cdot]\hh_{j}\|_{L^{2}}+\|[\Delta^{s},\nn_{j}\cdot]\hh_{i}\|_{L^{2}}\big)\\
\leq&C\big(\|\Delta^{s+1}\Fp\|_{L^{2}}+\|\nabla\Fp\|_{L^{\infty}}\|\Delta^{s}\nabla\Fp\|_{L^{2}}+\|\nabla\Fp\|_{L^{\infty}}^{2}\|\Delta^{s}\Fp\|_{L^{2}}\big)\\
&+C\big(\|\nabla^{2s+1}\nn_{i}\|_{L^{2}}\|\hh_{j}\|_{L^{\infty}}+\|\nabla\nn_{i}\|_{L^{\infty}}\|\nabla^{2s+1}\hh_{j}\|_{L^{2}}+\|\nabla^{2s+1}\hh_{j}\|_{L^{2}}\big)\\
\leq&C\Big(1+\sum^3_{k=1}E^{\frac{k}{2}}_{s}(\Fp,\vv)\Big)D^{\frac{1}{2}}_{s}(\Fp,\vv),
\end{align*}
where $\CH^{\Delta^s}_3$ is expressed by (\ref{CH}).

Therefore, from the equation (\ref{n1.}) and  Lemma \ref{pde} together with the above estimates, the term $\|\Delta^s\dot{\nn}_1\|_{L^2}$  can be handled as
\begin{align}\label{Delta-dot-nn1}
    \|\Delta^{s}\dot{\nn}_{1}\|_{L^{2}}\leq&\Big\|\Delta^s\Big\{\Big(\frac{1}{2}\WW\cdot\aaa_{1}+\frac{\eta_{3}}{\chi_{3}}\DD\cdot\sss_{3}-\frac{1}{\chi_{3}}\ML_3\CF_{Bi}\Big)\nn_{2}\Big\}\Big\|_{L^{2}}\nonumber\\
 &+\Big\|\Delta^s\Big\{\Big(\frac{1}{2}\WW\cdot\aaa_{2}+\frac{\eta_{2}}{\chi_{2}}\DD\cdot\sss_{4}-\frac{1}{\chi_{2}}\ML_2\CF_{Bi}\Big)\nn_{3}\Big\}\Big\|_{L^{2}}\nonumber\\
\leq&C\Big(\sum_{j=1,2}\|\Delta^s(\WW\cdot\aaa_{j})\|_{L^{2}}+\sum_{j=3,4}\|\Delta^{s}(\DD\cdot\sss_{j})\|_{L^{2}}\nonumber\\
 &+\|[\Delta^{s},\nn_{2}](\WW\cdot\aaa_{1})\|_{L^{2}}+\|[\Delta^s,\nn_{3}](\WW\cdot\aaa_{2})\|_{L^{2}}\nonumber\\
 &+\|[\Delta^{s},\nn_{2}](\DD\cdot\sss_{3})\|_{L^{2}}+\|[\Delta^{s},\nn_{3}](\DD\cdot\sss_{4})\|_{L^{2}}\nonumber\\
 &+\|[\Delta^{s},\nn_{2}]\ML_{3}\CF_{Bi}\|_{L^{2}}+\|[\Delta^{s},\nn_{3}]\ML_{2}\CF_{Bi}\|_{L^{2}}\nonumber\\
 &+\sum_{k=2,3}\|\Delta^{s}(\ML_{k}\CF_{Bi})\|_{L^{2}}\Big)\nonumber\\
\leq&C\Big(1+\sum^3_{k=1}E^{\frac{k}{2}}_{s}(\Fp,\vv)\Big)D^{\frac{1}{2}}_{s}(\Fp,\vv),
\end{align}
Further, the term $J_5$ can be estimated as
\begin{align*}
J_5
\leq&\|\Delta^{s}\dot{\nn}_{1}\|_{L^{2}}\|\nabla\times([\Delta^{s},(\nn_{j}\otimes\nn_j)](\nabla\times\nn_{1}))\|_{L^{2}}\\
\leq&C\|\Delta^{s}\dot{\nn}_{1}\|_{L^{2}}\sum_{i=1}^{3}\Big(\|[\Delta^{s},\partial_{i}(\nn_{j}\otimes\nn_j)](\nabla\times\nn_{1}))\|_{L^{2}}+\|[\Delta^{s},\nn_{j}\otimes\nn_j](\nabla\times(\partial_{i}\nn_{1}))\|_{L^{2}}\Big)\\
\leq &C\sum_{k=1}^{5}E^{\frac{k}{2}}_{s}(\Fp,\vv)D_{s}(\Fp,\vv).
\end{align*}
Consequently, from (\ref{J}) and (\ref{J-expression}) together with the estimate of $J_5$ one can obtain
\begin{align}\label{320}
&\frac{1}{2}\frac{\ud}{\ud t}\|\nn_{j}\cdot\Delta^{s}(\nabla\times\nn_{1})\|_{L^{2}}^{2}\nonumber\\
&\quad\leq J_{1}+J_{2}+J_{3}+J_{4}+C\sum_{k=1}^{5}E^{\frac{k}{2}}_{s}(\Fp,\vv)D_{s}(\Fp,\vv).
\end{align}

For simplicity, for $i=1,2,3$ we define 
\begin{align*}
    \Delta^s\hh_i\eqdefa\CL_i(\Fp)+\CG_i(\Fp),
\end{align*}
where $\CL_i(\Fp)$ and $\CG_i(\Fp)$ are higher-order and lower-order derivative terms, respectively,
\begin{align}
\CL_i(\Fp)=&\gamma_i\Delta^{s+1}\nn_i+k_i\Delta^s\nabla\text{div}\nn_i-\sum^3_{j=1}k_{ji}\Delta^s\nabla\times(\nabla\times\nn_i\cdot(\nn_j\otimes\nn_j)),\nonumber\\
\mathcal{G}_{i}(\Fp)=&-\sum_{j=1}^{3}k_{ij}\Delta^{s}\big[(\nn_{i}\cdot\nabla\times\nn_{j})(\nabla\times\nn_{j})\big].  \label{mL}
\end{align}
Then, using the definition of $\hh_{1}$ and $(\ref{mL})$, we infer that
\begin{align}\label{beta-4-Jbeta}
&\sum_{\beta=1}^{4}\Big(\gamma_{1}\Rmnum{1}_{\beta}+k_{1}\widetilde{\Rmnum{1}}_{\beta}+\sum_{j=1}^{3}k_
{j1}J_{\beta}\Big)=-\langle\Delta^{s}\dot{\nn}_{1},\Delta^s\hh_1-\mathcal{G}_{1}(\Fp)\rangle\nonumber\\
&\quad\leq-\Big\langle\Big(\frac{1}{2}\Delta^{s}\WW\cdot\aaa_{1}+\frac{\eta_{3}}{\chi_{3}}\Delta^{s}\DD\cdot\sss_{3}\Big)\nn_{2},\Delta^{s}\hh_{1}\Big\rangle\nonumber\\
&\qquad+\Big\langle\Big(\frac{1}{2}\Delta^{s}\WW\cdot\aaa_{2}+\frac{\eta_{2}}{\chi_{2}}\Delta^{s}\DD\cdot\sss_{4}\Big)\nn_{3},\Delta^{s}\hh_{1}\Big\rangle\nonumber\\
&\qquad+\Big\langle-\frac{1}{\chi_{2}}(\CH^{\Delta^{s}}_{2})\nn_{3}+\frac{1}{\chi_{3}}(\CH^{\Delta^{s}}_{3})\nn_{2},\Delta^{s}\hh_{1}\Big\rangle\nonumber\\
&\qquad+C\underset{\mathcal{A}_{1}}{\underbrace{\Big|\langle\nabla_{l}[\Delta^{s},\aaa_{1}\otimes\nn_{2}\cdot]\WW+\nabla_{l}[\Delta^{s},\aaa_{2}\otimes\nn_{3}\cdot]\WW,\Delta^{s-1}\nabla^{l}\hh_{1}\rangle\Big|}}\nonumber\\
&\qquad+C\underset{\mathcal{A}_{2}}{\underbrace{\Big|\langle\nabla_{l}[\Delta^{s},\sss_{3}\otimes\nn_{2}\cdot]\DD+\nabla_{l}[\Delta^{s},\sss_{4}\otimes\nn_{3}\cdot]\WW,\Delta^{s-1}\nabla^{l}\hh_{1}\rangle\Big|}}\nonumber\\
&\qquad+C\underset{\mathcal{A}_{3}}{\underbrace{\Big|\langle\nabla_{l}[\Delta^{s},\nn_{2}\cdot](\ML_3\CF_{B_{i}})-\nabla_{l}[\Delta^{s},\nn_{3}\cdot](\ML_2\CF_{B_{i}}),\Delta^{s-1}\nabla^{l}\hh_{1}\rangle\Big|}}\nonumber\\
&\qquad+\underset{\mathcal{B}_{1}}{\underbrace{\frac{1}{\chi_{3}}\Big\langle([\Delta^{s},\nn_{1}\cdot]\hh_{2}-[\Delta^{s},\nn_{2}\cdot]\hh_{1})\nn_{2},\Delta^{s}\hh_{1}\Big\rangle}}\nonumber\\
&\qquad-\underset{\mathcal{B}_{2}}{\underbrace{\frac{1}{\chi_{2}}\Big\langle([\Delta^{s},\nn_{3}\cdot]\hh_{1}-[\Delta^{s},\nn_{1}]\hh_{3})\nn_{3},\Delta^{s}\hh_{1}\Big\rangle}}\nonumber\\
&\qquad+\|\Delta^{s}\dot{\nn}_{1}\|_{L^{2}}\|\mathcal{G}_{1}(\Fp)\|_{L^{2}}.
\end{align}
For the term $\mathcal{A}_{1}$, we have
\begin{align*}
\mathcal{A}_{1}\leq&\Big(\|\langle\nabla_{l}[\Delta^{s},\aaa_{1}\otimes\nn_{2}\cdot]\WW\|_{L^{2}}+\|\nabla_{l}[\Delta^{s},\aaa_{2}\otimes\nn_{3}\cdot]\WW\|_{L^{2}}\Big)\|\Delta^{s-1}\nabla^{l}\hh_{1}\|_{L^{2}}\\
\leq&\Big(\|[\Delta^{s},\nabla_{l}(\aaa_{1}\otimes\nn_{2})\cdot]\WW\|_{L^{2}}+\|[\Delta^{s},\aaa_{1}\otimes\nn_{2}\cdot]\nabla_{l}\WW\|_{L^{2}}\\
&+\|[\Delta^{s},\nabla_{l}(\aaa_{2}\otimes\nn_{3})\cdot]\WW\|_{L^{2}}+\|[\Delta^{s},\aaa_{2}\otimes\nn_{3}\cdot]\nabla_{l}\WW\|_{L^{2}}\Big)\|\Delta^{s-1}\nabla^{l}\hh_{1}\|_{L^{2}}\\
\leq&C\Big(\|\Delta^{s}\nabla(\aaa_{1}\otimes\nn_{2})\|_{L^{2}}\|\nabla\vv\|_{L^{\infty}}+\|\nabla^{2}(\aaa_{1}\otimes\nn_{2})\|_{L^{\infty}}\|\Delta^{s}\vv\|_{L^{2}}\\
&+\|\Delta^{s}\nabla(\aaa_{2}\otimes\nn_{3})\|_{L^{2}}\|\nabla\vv\|_{L^{\infty}}+\|\nabla^{2}(\aaa_{2}\otimes\nn_{3})\|_{L^{\infty}}\|\Delta^{s}\vv\|_{L^{2}}\\
&+\|\Delta^{s}\nabla(\aaa_{1}\otimes\nn_{2})\|_{L^{2}}\|\nabla\vv\|_{L^{\infty}}+\|\nabla(\aaa_{1}\otimes\nn_{2})\|_{L^{\infty}}\|\Delta^{s}\nabla\vv\|_{L^{2}}\\
&+\|\Delta^{s}\nabla(\aaa_{2}\otimes\nn_{3})\|_{L^{2}}\|\nabla\vv\|_{L^{\infty}}+\|\nabla(\aaa_{2}\otimes\nn_{3})\|_{L^{\infty}}\|\Delta^{s}\nabla\vv\|_{L^{2}}\Big)\|\Delta^{s-1}\nabla^{l}\hh_{1}\|_{L^{2}}\\
\leq&C\sum_{k=1}^{5}E^{\frac{k}{2}}_{s}(\Fp,\vv)D_{s}(\Fp,\vv),
\end{align*}
where we have employed estimates
\begin{align*}
\|\nabla^{2}(\aaa_{1}\otimes\nn_{2})\|_{L^{\infty}}\leq&\|\nabla^{2}\aaa_{1}\|_{
L^{\infty}}+2\|\nabla\aaa_{1}\otimes\nabla\nn_{2}\|_{L^{\infty}}+\|\nabla^{2}\nn_{2}\|_{L^{\infty}}\\
\leq&C\big(\|\nabla^{2}\Fp\|_{
L^{\infty}}+\|\nabla\Fp\|_{
L^{\infty}}^{2}\big)\\
\leq&C\big(E^{\frac{1}{2}}_{s}(\Fp,\vv)+E_{s}(\Fp,\vv)\big),\\
\|\Delta^{s}\nabla(\aaa_{1}\otimes\nn_{2})\|_{L^{2}}\leq&C\big(\|[\Delta^{s}\nabla,\aaa_{1}]\nn_{2}\|_{L^{2}}+\|\aaa_{1}\otimes\Delta^{s}\nabla\nn_{2}\|_{L^{2}}\big)\\
\leq&C\Big(\|\Delta^{s}\nabla\aaa_{1}\|_{L^{2}}\|\nn_{2}\|_{L^{\infty}}+\|\nabla\aaa_{1}\|_{L^{\infty}}\|\Delta^{2}\nn_{2}\|_{L^{2}}+\|\Delta^{s}\nabla\Fp\|_{L^{2}}\Big)\\
\leq&C\big(\|\Delta^{s}\nabla\Fp\|_{L^{2}}+\|\nabla\Fp\|_{L^{\infty}}\|\Delta^{s}\Fp\|_{L^{2}}\big)\\
\leq&C\big(1+E^{\frac{1}{2}}_{s}(\Fp,\vv)\big)D^{\frac{1}{2}}_{s}(\Fp,\vv),\\
\|\Delta^{s-1}\nabla\hh_{1}\|_{L^{2}}\leq&C\big(1+E^{\frac{1}{2}}_{s}(\Fp,\vv)+E_{s}(\Fp,\vv)\big)D^{\frac{1}{2}}_{s}(\Fp,\vv).
\end{align*}
Obviously, the estimate of the term $\CA_2$ is the same as that of the term $\CA_1$.
For the term $\mathcal{A}_{3}$, we infer from Lemma \ref{pde} that
\begin{align*}
    \mathcal{A}_{3}\leq&\Big(\|[\Delta^{s},\nabla_{l}\nn_{2}\cdot]\ML_{3}\CF_{Bi}\|_{L^{2}}+\|[\Delta^{s},\nabla_{l}\nn_{3}\cdot]\ML_{2}\CF_{Bi}\|_{L^{2}}\\
    &+\|[\Delta^{s},\nn_{2}\cdot]\nabla_{l}\ML_{3}\CF_{Bi}\|_{L^{2}}+\|[\Delta^{s},\nn_{3}\cdot]\nabla_{l}\ML_{2}\CF_{Bi}\|_{L^{2}}\Big)\|\Delta^{s-1}\nabla^{l}\hh_{1}\|_{L^{2}}\\
    \leq&C\sum_{k=1}^{5}E^{\frac{k}{2}}_{s}(\Fp,\vv)D_{s}(\Fp,\vv).
\end{align*}
From (\ref{Delta-dot-nn1}) and (\ref{mL}), we immediately obtain
\begin{align*}
    \|\Delta^{s}\dot{\nn}_{1}\|_{L^{2}}\|\mathcal{G}_{1}(\Fp)\|_{L^{2}}\leq C\sum_{k=1}^{5}E^{\frac{k}{2}}_{s}(\Fp,\vv)D_{s}(\Fp,\vv).
\end{align*}
Then, plugging the above estimates $\CA_i(i=1,2,3)$ into (\ref{beta-4-Jbeta}) leads to
\begin{align}\label{321}
&\sum_{\beta=1}^{4}\Big(\gamma_{1}\Rmnum{1}_{\beta}+k_{1}\widetilde{\Rmnum{1}}_{\beta}+\sum_{j=1}^{3}k_
{j1}J_{\beta}\Big)\nonumber\\
&\quad\leq-\Big\langle\Big(\frac{1}{2}\Delta^{s}\WW\cdot\aaa_{1}+\frac{\eta_{3}}{\chi_{3}}\Delta^{s}\DD\cdot\sss_{3}\Big)\nn_{2},\Delta^{s}\hh_{1}\Big\rangle\nonumber\\
&\qquad+\Big\langle\Big(\frac{1}{2}\Delta^{s}\WW\cdot\aaa_{2}+\frac{\eta_{2}}{\chi_{2}}\Delta^{s}\DD\cdot\sss_{4}\Big)\nn_{3},\Delta^{s}\hh_{1}\Big\rangle\nonumber\\
&\qquad+\Big\langle-\frac{1}{\chi_{2}}(\CH^{\Delta^{s}}_{2})\nn_{3}+\frac{1}{\chi_{3}}(\CH^{\Delta^{s}}_{3})\nn_{2},\Delta^{s}\hh_{1}\Big\rangle\nonumber\\
&\qquad+\mathcal{B}_{1}+\mathcal{B}_{2}+C\sum_{k=1}^{5}E^{\frac{k}{2}}_{s}(\Fp,\vv)D_{s}(\Fp,\vv).
\end{align}

Therefore, combining (\ref{314})--(\ref{316}) with (\ref{320}) and (\ref{321}), we arrive at
\begin{align}\label{Es-1-Fp}
\frac{\ud}{\ud t}\CE_{1}^{s}(\Fp)\leq&-\Big\langle\Big(\frac{1}{2}\Delta^{s}\WW\cdot\aaa_{1}+\frac{\eta_{3}}{\chi_{3}}\Delta^{s}\DD\cdot\sss_{3}\Big)\nn_{2},\Delta^{s}\hh_{1}\Big\rangle\nonumber\\
&+\Big\langle\Big(\frac{1}{2}\Delta^{s}\WW\cdot\aaa_{2}+\frac{\eta_{2}}{\chi_{2}}\Delta^{s}\DD\cdot\sss_{4}\Big)\nn_{3},\Delta^{s}\hh_{1}\Big\rangle\nonumber\\
&+\Big\langle-\frac{1}{\chi_{2}}(\CH^{\Delta^{s}}_{2})\nn_{3}+\frac{1}{\chi_{3}}(\CH^{\Delta^{s}}_{3})\nn_{2},\Delta^{s}\hh_{1}\Big\rangle\nonumber\\
&+\mathcal{B}_{1}+\mathcal{B}_{2}+C\sum_{k=1}^{5}E^{\frac{k}{2}}_{s}(\Fp,\vv)D_{s}(\Fp,\vv).    
\end{align}
Similar to the derivation of (\ref{Es-1-Fp}), it follows that
\begin{align}
\frac{\ud}{\ud t}\CE_{2}^{s}(\Fp)\leq&-\Big\langle\Big(\frac{1}{2}\Delta^{s}\WW\cdot\aaa_{1}+\frac{\eta_{3}}{\chi_{3}}\Delta^{s}\DD\cdot\sss_{3}\Big)\nn_{1},\Delta^{s}\hh_{2}\rangle\nonumber\\
&+\Big\langle\Big(\frac{1}{2}\Delta^{s}\WW\cdot\aaa_{3}+\frac{\eta_{1}}{\chi_{1}}\Delta^{s}\DD\cdot\sss_{5}\Big)\nn_{3},\Delta^{s}\hh_{2}\Big\rangle\nonumber\\
&+\Big\langle-\frac{1}{\chi_{3}}(\CH^{\Delta^{s}}_{3})\nn_{1}+\frac{1}{\chi_{1}}(\CH^{\Delta^{s}}_{1})\nn_{3},\Delta^{s}\hh_{2}\Big\rangle\nonumber\\
&+\mathcal{B}'_{1}+\mathcal{B}'_{2}+C\sum_{k=1}^{5}E^{\frac{k}{2}}_{s}(\Fp,\vv)D_{s}(\Fp,\vv),\label{Es-2-Fp}\\
\frac{\ud}{\ud t}\CE_{3}^{s}(\Fp)\leq&-\Big\langle\Big(\frac{1}{2}\Delta^{s}\WW\cdot\aaa_{2}+\frac{\eta_{2}}{\chi_{2}}\Delta^{s}\DD\cdot\sss_{4}\Big)\nn_{1},\Delta^{s}\hh_{3}\Big\rangle\nonumber\\
&+\Big\langle\Big(\frac{1}{2}\Delta^{s}\WW\cdot\aaa_{3}+\frac{\eta_{1}}{\chi_{1}}\Delta^{s}\DD\cdot\sss_{5})\nn_{2},\Delta^{s}\hh_{3}\Big\rangle\nonumber\\
&+\Big\langle-\frac{1}{\chi_{1}}(\CH^{\Delta^{s}}_{1})\nn_{2}+\frac{1}{\chi_{2}}(\CH^{\Delta^{s}}_{2})\nn_{1},\Delta^{s}\hh_{3}\Big\rangle\nonumber\\
&+\mathcal{B}''_{1}+\mathcal{B}''_{2}+C\sum_{k=1}^{5}E^{\frac{k}{2}}_{s}(\Fp,\vv)D_{s}(\Fp,\vv), \label{Es-3-Fp}
\end{align}
where $\mathcal{B}'_{i},\mathcal{B}''_{i}(i=1,2)$ are expressed by, respectively,
\begin{align*}
\mathcal{B}'_{1}=&-\frac{1}{\chi_{3}}\Big\langle([\Delta^{s},\nn_{1}\cdot]\hh_{2}-[\Delta^{s},\nn_{2}]\hh_{1})\nn_{1},\Delta^{s}\hh_{2}\Big\rangle,\\
\mathcal{B}'_{2}=&\frac{1}{\chi_{1}}\Big\langle([\Delta^{s},\nn_{2}\cdot]\hh_{3}-[\Delta^{s},\nn_{3}]\hh_{2})\nn_{3},\Delta^{s}\hh_{2}\Big\rangle,\\
\mathcal{B}''_{1}=&-\frac{1}{\chi_{1}}\Big\langle([\Delta^{s},\nn_{2}\cdot]\hh_{3}-[\Delta^{s},\nn_{3}]\hh_{2})\nn_{2},\Delta^{s}\hh_{3}\Big\rangle,\\
\mathcal{B}''_{2}=&\frac{1}{\chi_{2}}\Big\langle([\Delta^{s},\nn_{3}\cdot]\hh_{1}-[\Delta^{s},\nn_{1}]\hh_{3})\nn_{1},\Delta^{s}\hh_{3}\Big\rangle.
\end{align*}
From the definitions of $\CH^{\Delta^{s}}_{i}(i=1,2,3)$ in (\ref{CH}), direct calculations lead to
\begin{align}\label{CB-estimates}
&\sum_{\alpha=1}^{2}\big(\mathcal{B}_{\alpha}+\mathcal{B}'_{\alpha}+\mathcal{B}''_{\alpha}\big)\nonumber\\
&\quad=-\frac{1}{\chi_{3}}\langle[\Delta^{s},\nn_{1}\cdot]\hh_{2}-[\Delta^{s},\nn_{2}]\hh_{1},\CH^{\Delta^{s}}_{3}\rangle,\nonumber\\
&\qquad-\frac{1}{\chi_{2}}\langle[\Delta^{s},\nn_{3}\cdot]\hh_{1}-[\Delta^{s},\nn_{1}]\hh_{3},\CH^{\Delta^{s}}_{2}\rangle\nonumber\\
&\qquad-\frac{1}{\chi_{1}}\langle[\Delta^{s},\nn_{2}\cdot]\hh_{3}-[\Delta^{s},\nn_{3}]\hh_{2},\Delta^{s}\CH^{\Delta^{s}}_{1}\rangle\nonumber\\
&\quad\leq C\sum_{k=1}^{5}E^{\frac{k}{2}}_{s}(\Fp,\vv)D_{s}(\Fp,\vv).
\end{align}
Consequently, from (\ref{Es-1-Fp})--(\ref{Es-3-Fp}) and (\ref{CB-estimates}) we derive that
\begin{align}\label{Ps}
&\frac{\ud}{\ud t}\sum_{i=1}^{3}\CE_{i}^{s}(\Fp)+\sum_{i=1}^{3}\frac{1}{\chi_{i}}\|\CH^{\Delta^{s}}_{i}\|_{L^{2}}\nonumber\\
&\quad\leq\Big\langle\Big(\frac{1}{2}\Delta^{s}\WW\cdot\aaa_{1}+\frac{\eta_{3}}{\chi_{3}}\Delta^{s}\DD\cdot\sss_{3}\Big),\CH^{\Delta^{s}}_{3}\Big\rangle\nonumber\\
&\qquad+\Big\langle\Big(\frac{1}{2}\Delta^{s}\WW\cdot\aaa_{2}+\frac{\eta_{2}}{\chi_{2}}\Delta^{s}\DD\cdot\sss_{4}\Big),\CH^{\Delta^{s}}_{2}\Big\rangle\nonumber\\
&\qquad+\Big\langle\Big(\frac{1}{2}\Delta^{s}\WW\cdot\aaa_{3}+\frac{\eta_{1}}{\chi_{1}}\Delta^{s}\DD\cdot\sss_{5}\Big),\CH^{\Delta^{s}}_{1}\Big\rangle\nonumber\\
&\qquad+C\sum_{k=1}^{5}E^{\frac{k}{2}}_{s}(\Fp,\vv)D_{s}(\Fp,\vv).
\end{align}

To control the dissipative higher-order derivative term in (\ref{Ps}), we need the following lemma.

\begin{lemma}\label{lfs}
For any given $\Fp=(\nn_1,\nn_2,\nn_3)\in SO(3)$, there exists a constant $C>0$ such that
\begin{align}
\sum_{i=1}^{3}\frac{1}{\chi_{i}}\|\CH^{\Delta^{s}}_{i}\|_{L^{2}}^{2}\geq\frac{\gamma^{2}}{4\chi}\|\Delta^{s+1}\Fp\|_{L^{2}}^{2}-C\sum_{k=1}^{5}E^{\frac{k}{2}}_{s}(\Fp,\vv)D_{s}(\Fp,\vv), 
\end{align}
where $\gamma=\min\{\gamma_1,\gamma_2,\gamma_3\}$, $\chi=\max\{\chi_1,\chi_2,\chi_3\}$, and $\CH^{\Delta^s}_i(i=1,2,3)$ are defined by \eqref{CH}.
\end{lemma}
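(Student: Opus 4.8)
The statement is the $\Delta^{s}$-analogue of Lemma \ref{lf}, and the plan is to follow that proof closely, with $\Delta^{s}$ inserted at the right place. Recalling \eqref{hh-i3-definition}, \eqref{CH} and the bases $V_{1}=(0,\nn_{3},-\nn_{2})$, $V_{2}=(-\nn_{3},0,\nn_{1})$, $V_{3}=(\nn_{2},-\nn_{1},0)$ of $T_{\Fp}SO(3)$, one checks directly that
$\CH^{\Delta^{s}}_{i}=-V_{i}\cdot\Delta^{s}(\hh_{1},\hh_{2},\hh_{3})=V_{i}\cdot\Delta^{s}\big(\tfrac{\partial f_{Bi}}{\partial\Fp}-\nabla\cdot\tfrac{\partial f_{Bi}}{\partial(\nabla\Fp)}\big)$.
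Since $\tfrac{\partial f_{Bi}}{\partial\Fp}$ is, by \eqref{new-elasitic-density}, quadratic in $\nabla\Fp$ with bounded smooth frame coefficients, Lemma \ref{pde} together with the Sobolev embeddings (here $s\ge2$ is used) gives $\|\Delta^{s}\tfrac{\partial f_{Bi}}{\partial\Fp}\|_{L^{2}}\le C\sum_{k=1}^{5}E_{s}^{k/2}D_{s}^{1/2}$. Expanding the square and applying Cauchy--Schwarz to the cross term, the problem reduces to bounding $\tfrac1\chi\int_{\mathbb{R}^{d}}\sum_{i=1}^{3}(V_{i}\cdot A)^{2}\,\ud\xx$ from below, where $A:=\Delta^{s}\big(\nabla\cdot\tfrac{\partial f_{Bi}}{\partial(\nabla\Fp)}\big)$, up to an admissible error $C\sum_{k=1}^{5}E_{s}^{k/2}D_{s}$.

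\textbf{Extracting the leading term.} Next I would write $A=\gamma\Delta^{s+1}\Fp+R$ with $R:=\Delta^{s}\big(\nabla\cdot\tfrac{\partial f_{Bi}}{\partial(\nabla\Fp)}-\gamma\Delta\Fp\big)$, so that $(V_{i}\cdot A)^{2}\ge\gamma^{2}(V_{i}\cdot\Delta^{s+1}\Fp)^{2}+2\gamma(V_{i}\cdot\Delta^{s+1}\Fp)(V_{i}\cdot R)$. For the first term I apply the orthogonal decomposition \eqref{decomposition} with $A=B=\Delta^{s+1}\Fp$; since $|V_{i}|^{2}=2$,
\[
\int_{\mathbb{R}^{d}}\sum_{i=1}^{3}(V_{i}\cdot\Delta^{s+1}\Fp)^{2}\,\ud\xx=2\|\Delta^{s+1}\Fp\|_{L^{2}}^{2}-2\sum_{k=1}^{6}\frac{1}{|W_{k}|^{2}}\|\Delta^{s+1}\Fp\cdot W_{k}\|_{L^{2}}^{2}.
\]
The key point, exactly as in Lemma \ref{lf}, is that the $W_{k}$-components are lower order: from $W_{k}\cdot\nabla\Fp=0$ (see \eqref{frame-differ-properties}) and the Leibniz rule, $\Delta^{s+1}\Fp\cdot W_{k}$ is a finite sum of bilinear expressions in derivatives of $\Fp$ of total order $2s+2$ with each factor of order between $1$ and $2s+1$, so that (placing the lower-order factor in $L^{\infty}$ and the other in $L^{2}$) $\|\Delta^{s+1}\Fp\cdot W_{k}\|_{L^{2}}\le CE_{s}^{1/2}D_{s}^{1/2}$, whence this sum contributes only $CE_{s}D_{s}$.

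\textbf{The cross term and conclusion.} Applying \eqref{decomposition} once more, now to the pair $(R,\Delta^{s+1}\Fp)$, gives $\int\sum_{i}(V_{i}\cdot R)(V_{i}\cdot\Delta^{s+1}\Fp)=2\int R\cdot\Delta^{s+1}\Fp-2\sum_{k}|W_{k}|^{-2}\int(W_{k}\cdot R)(W_{k}\cdot\Delta^{s+1}\Fp)$, and the $W_{k}$-terms are again bounded by $C\sum_{k=1}^{5}E_{s}^{k/2}D_{s}$ because one factor is the lower-order quantity $W_{k}\cdot\Delta^{s+1}\Fp$ while $\|W_{k}\cdot R\|_{L^{2}}\le CD_{s}^{1/2}(1+\cdots)$. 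For the main term I would insert the representation of Lemma \ref{h-decomposition} into $\nabla\cdot\tfrac{\partial f_{Bi}}{\partial(\nabla\Fp)}-\gamma\Delta\Fp$ and integrate by parts repeatedly, using Lemma \ref{pde} to commute $\Delta^{s}$ past the frame-valued factors $\nn_{j}\otimes\nn_{j}$ (the resulting commutators being lower order): the pieces $(\gamma_{i}-\gamma)\Delta\nn_{i}$, $k_{i}\nabla\mathrm{div}\,\nn_{i}$ and $-\sum_{j}k_{ji}\nabla\times(\nabla\times\nn_{i}\cdot(\nn_{j}\otimes\nn_{j}))$ produce the squares $(\gamma_{i}-\gamma)\|\Delta^{s+1}\nn_{i}\|_{L^{2}}^{2}$, $k_{i}\|\nabla\Delta^{s}\mathrm{div}\,\nn_{i}\|_{L^{2}}^{2}$ and $\sum_{j}k_{ji}\|\nabla(\nn_{j}\cdot\Delta^{s}\nabla\times\nn_{i})\|_{L^{2}}^{2}$, all nonnegative by \eqref{gamma-i3}--\eqref{ki-kij}, while the remaining bilinear piece $-\sum_{j}k_{ij}(\nn_{i}\cdot\nabla\times\nn_{j})(\nabla\times\nn_{j})$ and the $\tfrac{\partial f_{Bi}}{\partial\Fp}$ contribution are lower order. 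Hence $\int R\cdot\Delta^{s+1}\Fp\ge-C\sum_{k=1}^{5}E_{s}^{k/2}D_{s}$, and collecting the estimates gives $\sum_{i}\tfrac1{\chi_{i}}\|\CH^{\Delta^{s}}_{i}\|_{L^{2}}^{2}\ge\tfrac{2\gamma^{2}}{\chi}\|\Delta^{s+1}\Fp\|_{L^{2}}^{2}-C\sum_{k=1}^{5}E_{s}^{k/2}D_{s}$, which is stronger than the asserted bound with constant $\tfrac{\gamma^{2}}{4\chi}$.

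\textbf{Main difficulty.} The conceptual heart is unchanged from Lemma \ref{lf}: any second-order quantity tested against $\Delta^{s+1}\Fp$ has automatically lower-order $W_{k}$-components, because of the constraint $W_{k}\cdot\nabla\Fp=0$. The new and most laborious part is the bookkeeping of the many commutator and product error terms generated at order $\Delta^{s}$ — in particular commuting $\Delta^{s}$ through the coefficients $\nn_{j}\otimes\nn_{j}$ in the $k_{ji}$-elastic term before integrating by parts so that the leading square is extracted cleanly — and verifying uniformly that each such term is dominated by $C\sum_{k=1}^{5}E_{s}^{k/2}(\Fp,\vv)D_{s}(\Fp,\vv)$; this is where Lemma \ref{pde} and the Sobolev embeddings (valid since $s\ge2$) are invoked repeatedly.
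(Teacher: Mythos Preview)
Your proposal is correct and follows essentially the same strategy as the paper's proof in the Appendix: the orthogonal decomposition \eqref{decomposition} applied to $\Delta^{s+1}\Fp$, the elementary splitting $(V_i\cdot A)^2\ge \gamma^2(V_i\cdot\Delta^{s+1}\Fp)^2+2\gamma(V_i\cdot\Delta^{s+1}\Fp)(V_i\cdot R)$, and the treatment of the cross term $\int R\cdot\Delta^{s+1}\Fp$ by integrating by parts and commuting $\Delta^s$ past the frame coefficients (exactly as in \eqref{higher-term-1}--\eqref{higher-term-3}).

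The one organisational difference is worth noting. The paper first passes from $\CH^{\Delta^s}_k$ to $\Delta^s(\ML_k\CF_{Bi})=\Delta^s(V_k\cdot\tfrac{\delta\CF_{Bi}}{\delta\Fp})$ via the elementary inequality $|a-b+c|^2\ge\tfrac12|a|^2-4(|b|^2+|c|^2)$, and then has to commute $V_k$ back out of $\Delta^s$, each step costing a factor of $2$; this is why the paper ends up with $\tfrac{\gamma^2}{4\chi}$. You instead observe directly that $\CH^{\Delta^s}_i=-V_i\cdot\Delta^s\hh$, so $V_i$ is already outside and no such commutation is needed; the $\tfrac{\partial f_{Bi}}{\partial\Fp}$ piece is then peeled off by a single Cauchy--Schwarz on the cross term (using $\|V_i\cdot A\|_{L^2}\le C(1+\cdots)D_s^{1/2}$), and this is why you reach the sharper constant $\tfrac{2\gamma^2}{\chi}$. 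One small redundancy: the ``remaining bilinear piece $-\sum_j k_{ij}(\nn_i\cdot\nabla\times\nn_j)(\nabla\times\nn_j)$'' you mention \emph{is} $\tfrac{\partial f_{Bi}}{\partial\nn_i}$ (see \eqref{lower-order-pf}), so it does not appear in $R$ at all --- but since you correctly flag it as lower order either way, this does not affect the argument.
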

We relegate the proof of Lemma \ref{lfs} to the appendix so as not to distract from the main body of this article.

 From (\ref{Ps}) and Lemma \ref{lfs} we derive
\begin{align}\label{dv...}
&\frac{\ud}{\ud t}\sum_{i=1}^{3}\CE_{i}^{s}(\Fp)+\frac{\gamma^{2}}{4\chi}\|\Delta^{s+1}\Fp\|_{L^{2}}^{2}\nonumber\\   
&\quad\leq\Big\langle\Big(\frac{1}{2}\Delta^{s}\WW\cdot\aaa_{1}+\frac{\eta_{3}}{\chi_{3}}\Delta^{s}\DD\cdot\sss_{3}\Big),\CH^{\Delta^{s}}_{3}\Big\rangle\nonumber\\
&\qquad+\Big\langle\Big(\frac{1}{2}\Delta^{s}\WW\cdot\aaa_{2}+\frac{\eta_{2}}{\chi_{2}}\Delta^{s}\DD\cdot\sss_{4}\Big),\CH^{\Delta^{s}}_{2}\Big\rangle\nonumber\\
&\qquad+\Big\langle\Big(\frac{1}{2}\Delta^{s}\WW\cdot\aaa_{3}+\frac{\eta_{1}}{\chi_{1}}\Delta^{s}\DD\cdot\sss_{5}\Big),\CH^{\Delta^{s}}_{1}\Big\rangle\nonumber\\
&\qquad+C\sum_{k=1}^{5}E^{\frac{k}{2}}_{s}(\Fp,\vv)D_{s}(\Fp,\vv)\nonumber\\
&\quad=\Big\langle\frac{\eta_{3}}{\chi_{3}}(\CH^{\Delta^{s}}_{3})\sss_{3}+\frac{\eta_{2}}{\chi_{2}}(\CH^{\Delta^{s}}_{2})\sss_{4}+\frac{\eta_{1}}{\chi_{1}}(\CH^{\Delta^{s}}_{1})\sss_{5},\Delta^{s}\DD\Big\rangle\nonumber\\
&\qquad+\frac{1}{2}\Big\langle(\CH^{\Delta^{s}}_{3})\aaa_{1}+(\CH^{\Delta^{s}}_{2})\aaa_{2}+(\CH^{\Delta^{s}}_{1})\aaa_{3},\Delta^{s}\WW\Big\rangle\nonumber\\
&\qquad+C\sum_{k=1}^{5}E^{\frac{k}{2}}_{s}(\Fp,\vv)D_{s}(\Fp,\vv).
\end{align}

\subsection{The proof of Theorem \ref{flz}}
Armed with (\ref{dv..}), (\ref{dv.}) and (\ref{dv...}), we can get
\begin{align*}
 \frac{1}{2}\frac{\ud}{\ud t}E_{s}(\Fp,\vv)+D_{s}(\Fp,\vv)
 \leq C\sum_{k=1}^{5}E^{\frac{k}{2}}_{s}(\Fp,\vv)D_{s}(\Fp,\vv). 
\end{align*}
This implies that there exists $\varepsilon_{0}>0$ such that if $E_{s}(\Fp^{(0)},\vv^{(0)})\leq\varepsilon_{0},$ then
$$E_{s}(\Fp,\vv)(t)\leq E_{s}(\Fp^{(0)},\vv^{(0)})$$ 
for all $t\in[0,T]$.
Thus, using the blow-up criterion given by Theorem \ref{wcc}, the solution is global in time. As a consequence, the proof of Theorem \ref{flz} is finished.

\section{Appendix}
We here give the argument of Lemma \ref{lfs}. First of all, armed with the definitions of $\CH^{\Delta^s}_k(k=1,2,3)$ in (\ref{CH}) and Lemma \ref{pde}, together with the elementary inequality
\begin{align*}
    |a-b+c|^2\geq \frac{1}{2}|a|^2-4(|b|^2+|c|^2),~~a,b,c\in\mathbb{R},
\end{align*}
one has
\begin{align*}
\sum_{k=1}^{3}\frac{1}{\chi_{k}}\|\CH_{k}^{\Delta^{s}}\|_{L^{2}}^{2}=&\frac{1}{\chi_{1}}\Big\|\Delta^{s}(\ML_1\CF_{B{i}})-[\Delta^{s},\nn_{2}\cdot]\hh_{3}+[\Delta^{s},\nn_{3}\cdot]\hh_{2}\Big\|_{L^{2}}^{2}\\
&+\frac{1}{\chi_{2}}\Big\|\Delta^{s}(\ML_2\CF_{B{i}})-[\Delta^{s},\nn_{3}\cdot]\hh_{1}+[\Delta^{s},\nn_{1}\cdot]\hh_{3}\Big\|_{L^{2}}^{2}\\
&+\frac{1}{\chi_{3}}\Big\|\Delta^{s}(\ML_3\CF_{B{i}})-[\Delta^{s},\nn_{1}\cdot]\hh_{2}+[\Delta^{s},\nn_{2}\cdot]\hh_{1}\Big\|_{L^{2}}^{2}\\
\geq&\frac{1}{2\chi_{1}}\|\Delta^{s}(\ML_1\CF_{B{i}})\|_{L^{2}}^{2}-\frac{4}{\chi_{1}}\big(\|[\Delta^{s},\nn_{2}\cdot]\hh_{3}\|_{L^{2}}^{2}+\|[\Delta^{s},\nn_{3}\cdot]\hh_{2}\|_{L^{2}}^{2}\big)\\
&+\frac{1}{2\chi_{2}}\|\Delta^{s}(\ML_2\CF_{B{i}})\|_{L^{2}}^{2}-\frac{4}{\chi_{2}}\big(\|[\Delta^{s},\nn_{3}\cdot]\hh_{1}\|_{L^{2}}^{2}+\|[\Delta^{s},\nn_{1}\cdot]\hh_{3}\|_{L^{2}}^{2}\big)\\
&+\frac{1}{2\chi_{3}}\|\Delta^{s}(\ML_3\CF_{B{i}})\|_{L^{2}}^{2}-\frac{4}{\chi_{3}}\big(\|[\Delta^{s},\nn_{1}\cdot]\hh_{2}\|_{L^{2}}^{2}+\|[\Delta^{s},\nn_{2}\cdot]\hh_{1}\|_{L^{2}}^{2}\big)\\
\geq&\sum^3_{k=1}\frac{1}{2\chi_k}\|\Delta^s(\ML_k\CF_{Bi})\|^2_{L^2}-\frac{4}{\widetilde{\chi}}\sum^3_{i,j=1;i\neq j}\|[\Delta^{s},\nn_{i}\cdot]\hh_{j}\|_{L^{2}},
\end{align*}
where $\widetilde{\chi}=\min\{\chi_1,\chi_2,\chi_3\}$. Then, combining the above inequality with (\ref{Delta-s-nni-hhj}), we obtain
\begin{align}\label{C}
\sum_{k=1}^{3}\frac{1}{\chi_{k}}\|\CH_{k}^{\Delta^{s}}\|_{L^{2}}^{2}\geq&\sum_{k=1}^{3}\frac{1}{2\chi_{k}}\|\Delta^{s}(\ML_k\CF_{B{i}})\|_{L^{2}}^{2}-C\sum_{k=1}^{3}E^{\frac{k}{2}}_{s}(\Fp,\vv)D_{s}(\Fp,\vv).
\end{align}

To control the higher-order derivative terms involving the rotational differential operators $\ML_k(k=1,2,3)$ in (\ref{C}), we need to utilize the orthogonal decomposition on $T_{\Fp}SO(3)$. For any $A\in\mathbb{R}^{3\times3}$, taking $B=\Delta^{s+1}\Fp$ in $(\ref{decomposition})$, one can get that 
\begin{align}\label{decomposition 1}
A\cdot \Delta^{s+1}\Fp=\sum_{k=1}^{3}\frac{1}{|V_{k}|^2}(A\cdot V_{k})(\Delta^{s+1}\Fp\cdot V_k)+\sum_{k=1}^{6}\frac{1}{|W_{k}|^2}(A\cdot W_{k})(\Delta^{s+1}\Fp\cdot W_{k}),
\end{align}
 where $V_{k}(k=1,2,3)$ and $W_{k}(k=1,\cdots,6)$ are the orthogonal bases of the tangent space $T_{\Fp}SO(3)$ and its associated orthogonal complement space, respectively.

Taking 
\begin{align*}
A=\Delta^{s}\nabla\cdot\frac{\partial f_{B{i}}}{\partial(\nabla\Fp)}-\gamma\Delta^{s+1}\Fp,\quad \gamma=\min\{\gamma_1,\gamma_2,\gamma_3\}    
\end{align*}
into $(\ref{decomposition 1})$ and then integrating over $\mathbb{R}^{d}$, it follows that 
\begin{align}\label{Deltas-fBi-gamma}
&\int_{\mathbb{R}^{d}}\Big(\Delta^{s}\nabla\cdot\frac{\partial f_{B_{i}}}{\partial(\nabla\Fp)}-\gamma\Delta^{s+1}\Fp\Big)\cdot\Delta^{s+1}\Fp\ud\xx\nonumber\\
&\quad\leq\frac{1}{2}\int_{\mathbb{R}^{d}}\sum_{k=1}^{3}\Big[\Big(\Delta^{s}\nabla\cdot\frac{\partial f_{B_{i}}}{\partial(\nabla\Fp)}-\gamma\Delta^{s+1}\Fp\Big)\cdot V_{k}\Big](\Delta^{s+1}\cdot V_{k})\ud\xx\nonumber\\
&\qquad+\sum_{k=1}^{6}\underset{L_k}{\underbrace{\int_{\mathbb{R}^{d}}\frac{1}{|W_{k}|^{2}}\Big[\Big(\Delta^{s}\nabla\cdot\frac{\partial f_{B_{i}}}{\partial(\nabla\Fp)}-\gamma\Delta^{s+1}\Fp\Big)\cdot W_{k}\Big](\Delta^{s+1}\Fp\cdot W_{k})\ud\xx}}.
\end{align}
We next consider the estimates of the terms $L_{k}(k=1,2,3)$ in (\ref{Deltas-fBi-gamma}). Notice that from (\ref{hh-i3-definition}) and Lemma \ref{h-decomposition}, the lower-order derivative terms $\frac{\partial f_{Bi}}{\partial\nn_i}$ can be expressed by
\begin{align}\label{lower-order-pf}
     \frac{\partial f_{Bi}}{\partial\nn_i}=\sum^3_{j=1}k_{ij}(\nn_i\cdot\nabla\times\nn_j)(\nabla\times\nn_j),\quad i=1,2,3,
\end{align}
where the coefficients $k_{ij}$ are expressed by \eqref{ki-kij}.
Consequently, from (\ref{hh-i3-definition}) and the expression $W_1=(0,\nn_3,\nn_2)$, the term $L_1$ in (\ref{Deltas-fBi-gamma}) can be handled as
\begin{align}\label{L1-in}
L_{1}=&\frac{1}{2}\int_{\mathbb{R}^{d}}\Big[\nn_{3}\cdot\Big(\Delta^{s}\hh_{2}-\gamma\Delta^{s+1}\nn_{2}+\Delta^{s}\Big(\frac{\partial f_{B{i}}}{\partial\nn_{2}}\Big)\Big)\nonumber\\
&+\nn_{2}\cdot\Big(\Delta^{s}\hh_{3}-\gamma\Delta^{s+1}\nn_{3}+\Delta^{s}\Big(\frac{\partial f_{B{i}}}{\partial\nn_{3}}\Big)\Big)\Big]\big(\nn_{3}\cdot\Delta^{s+1}\nn_{2}+\nn_{2}\cdot\Delta^{s+1}\nn_{3}\big)\ud\xx\nonumber\\
=&\frac{1}{2}\int_{\mathbb{R}^{d}}\Big[\Delta^{s}(\nn_{3}\cdot\hh_{2})-[\Delta^{s},\nn_{3}\cdot]\hh_{2}-\gamma\nn_{3}\cdot\Delta^{s+1}\nn_{2}-\nn_{3}\cdot\Delta^{s}\Big(\frac{\partial f_{B{i}}}{\partial\nn_{2}}\Big)\nonumber\\
&+\Delta^{s}(\nn_{2}\cdot\hh_{3})-[\Delta^{s},\nn_{2}\cdot]\hh_{3}-\gamma\nn_{2}\cdot\Delta^{s+1}\nn_{3}+\nn_{2}\cdot\Delta^{s}\Big(\frac{\partial f_{B{i}}}{\partial\nn_{3}}\Big)\Big]\nonumber\\
&\quad\big(\nn_{3}\cdot\Delta^{s+1}\nn_{2}+\nn_{2}\cdot\Delta^{s+1}\nn_{3}\big)\ud\xx\nonumber\\
\leq&C\bigg(\|[\Delta^{s},\nn_{3}\cdot]\hh_{2}\|_{L^{2}}+\|[\Delta^{s},\nn_{2}\cdot]\hh_{3}\|_{L^{2}}+\Big\|\Delta^{s}\Big(\frac{\partial f_{B{i}}}{\partial\nn_{2}}\Big)\Big\|_{L^{2}}\nonumber\\
&+\|\Delta^{s}(\nn_{3}\cdot\hh_{2})\|_{L^{2}}+\|\Delta^{s}(\nn_{2}\cdot\hh_{3})\|_{L^{2}}+\Big\|\Delta^{s}\Big(\frac{\partial f_{B{i}}}{\partial\nn_{3}}\Big)\Big\|_{L^{2}}\nonumber\\
&+\big\|\nn_{3}\cdot\Delta^{s+1}\nn_{2}+\nn_{2}\cdot\Delta^{s+1}\nn_{3}\big\|_{L^{2}}\bigg)\big\|\nn_{3}\cdot\Delta^{s+1}\nn_{2}+\nn_{2}\cdot\Delta^{s+1}\nn_{3}\big\|_{L^{2}}.
\end{align}
For $i,j=2,3 (i\neq j)$, from Lemma \ref{pde}, we have
\begin{align*}
&\|\nn_{i}\cdot\Delta^{s+1}\nn_{j}+\nn_{j}\cdot\Delta^{s+1}\nn_{i}\|_{L^{2}}\\
&\quad=\|\nn_{i}\Delta^{s+1}\nn_{j}+\nn_{j}\Delta^{s+1}\nn_{i}-\Delta^{s+1}(\nn_{i}\cdot\nn_{j})\|_{L^{2}}\\
&\quad=\Big\|-\sum_{\substack{\alpha +\beta=2;\alpha,\beta \neq0}}^{2s+2}\CD^{\alpha}\nn_{i}\cdot\CD^{\beta}\nn_{j}\Big\|_{L^{2}}\\
&\quad\leq C\|\nabla^{2s}(\nabla\Fp\cdot\nabla\Fp)\|_{L^{2}} \\
&\quad\leq C\|\nabla\Fp\|_{L^{\infty}}\|\Delta^{s}\nabla\Fp\|_{L^{2}}.
\end{align*}
For $i=2,3$, from (\ref{lower-order-pf}) and Lemma \ref{pde} we get
\begin{align*}
    \Big\|\Delta^{s}\Big(\frac{\partial f_{B{i}}}{\partial\nn_{i}}\Big)\Big\|_{L^{2}}\leq&C\big(\|\Delta^{s}\nabla\Fp\|_{L^{2}}+\|\nabla\Fp\|_{L^{\infty}}\|\Delta^{s}\Fp\|_{L^{2}}\big).
\end{align*}
Then, combining the above inequalities with (\ref{L1-in}) together with Lemma \ref{pde} leads to
\begin{align*}
L_{1}\leq& C\Big(\|\Delta^{s}\Fp\|_{L^{2}}\|\hh_{j}\|_{L^{\infty}}+\|\nabla\Fp\|_{L^{\infty}}\|\nabla^{2s-1}\hh_{j}\|_{L^{2}}
+\|\nabla\Fp\|_{L^{\infty}}\|\Delta^{s}\nabla\Fp\|_{L^{2}}\\
&+\|\Delta^{s}\hh_{j}\|_{L^{2}}+\|\Delta^s\nabla\Fp\|_{L^2}\Big)\|\nabla\Fp\|_{L^{\infty}}\|\Delta^{s}\nabla\Fp\|_{L^{2}}\\
\leq&C\sum_{k=1}^{5}E^{\frac{k}{2}}_{s}(\Fp,\vv)D_{s}(\Fp,\vv).
\end{align*}
Similar to the estimate of $L_1$, there also has
\begin{align*}
L_{2}+L_3\leq C\sum_{k=1}^{5}E^{\frac{k}{2}}_{s}(\Fp,\vv)D_{s}(\Fp,\vv).
\end{align*}

We now turn to the estimates of the terms $L_k(k=4,5,6)$ in (\ref{Deltas-fBi-gamma}).  Using the expression $W_4=(\nn_1,0,0)$ and the definition (\ref{hh-i3-definition}), the term $L_4$ can be calculated as
\begin{align*}
 L_{4}=&\int_{\mathbb{R}^{d}}\Big[\nn_{1}\cdot\Delta^{s}\hh_{1}-\gamma\nn_1\cdot\Delta^{s+1}\nn_{1}+\nn_{1}\cdot\Delta^{s}\Big(\frac{\partial f_{B_{i}}}{\partial\nn_{1}}\Big)\Big](\nn_{1}\cdot\Delta^{s+1}\nn_{1})\ud\xx\\
\leq&C\Big(\|\Delta^{s}(\nn_{1}\cdot\hh_{1})\|_{L^{2}}+\|\Delta^s,\nn_1\cdot]\hh_1\|_{L^2}+\|\nn_{1}\cdot\Delta^{s+1}\nn_{1}\|_{L^{2}}\\
&+\Big\|\Delta^{s}\Big(\frac{\partial f_{B{i}}}{\partial\nn_{1}}\Big)\Big\|_{L^{2}}\Big)\|\nn_{1}\cdot\Delta^{s+1}\nn_{1}\|_{L^{2}}, 
\end{align*}
where
\begin{align*}    \|\nn_{1}\cdot\Delta^{s+1}\nn_{1}\|_{L^{2}}=&\Big\|\nn_{1}\cdot\Delta^{s+1}\nn_{1}-\frac{1}{2}\Delta^{s+1}(\nn_{1}\cdot\nn_{1})\Big\|_{L^{2}}\\
    =&\Big\|-\sum_{\substack{\alpha +\beta=2;\alpha,\beta \neq0}}^{2s+2}\CD^{\alpha}\nn_{1}\cdot\CD^{\beta}\nn_{1}\Big\|_{L^{2}}\\
    \leq& C\|\Delta^{s}(\nabla\Fp\cdot\nabla\Fp)\|_{L^{2}}\\
    \leq& C\|\nabla\Fp\|_{L^{\infty}}\|\Delta^s\nabla\Fp\|_{L^2}.
\end{align*}
Then, armed with Lemma \ref{pde} we can get that
\begin{align*}
    L_4\leq C\sum_{k=1}^{5}E^{\frac{k}{2}}_{s}(\Fp,\vv)D_{s}(\Fp,\vv).
\end{align*}
Again, the terms $L_5,L_6$ enjoy the same estimates as the term $L_4$. Summing up the terms from $L_1$ to $L_6$ yields
\begin{align}\label{L1-6}
    \sum_{k=1}^{6}L_{k}\leq&C\sum_{k=1}^{5}E^{\frac{k}{2}}_{s}(\Fp,\vv)D_{s}(\Fp,\vv).
\end{align}
Thus, from (\ref{Deltas-fBi-gamma}) and (\ref{L1-6}) we obtain
\begin{align}\label{Delta-s-partial-fBi}
&\int_{\mathbb{R}^{d}}\Big(\Delta^{s}\nabla\cdot\frac{\partial f_{B_{i}}}{\partial(\nabla\Fp)}-\gamma\Delta^{s+1}\Fp\Big)\cdot\Delta^{s+1}\Fp\ud\xx\nonumber\\
&\quad\leq\frac{1}{2}\int_{\mathbb{R}^{d}}\sum_{k=1}^{3}\Big[\Big(\Delta^{s}\nabla\cdot\frac{\partial f_{B_{i}}}{\partial(\nabla\Fp)}-\gamma\Delta^{s+1}\Fp\Big)\cdot V_{k}\Big](\Delta^{s+1}\cdot V_{k})\ud\xx\nonumber\\
&\qquad+C\sum_{k=1}^{5}E^{\frac{k}{2}}_{s}(\Fp,\vv)D_{s}(\Fp,\vv).
\end{align}

On the other hand, taking $A=\Delta^{s+1}\Fp$ in $(\ref{decomposition 1}),$ and then integrating over $\mathbb{R}^{d}$, we infer that
\begin{align}\label{Delta-s+1-Fp}
\|\Delta^{s+1}\Fp\|_{L^{2}}^{2}=&\frac{1}{2}\sum_{k=1}^{3}\|\Delta^{s+1}\Fp\cdot V_{k}\|_{L^{2}}^{2}+\sum_{k=1}^{6}\frac{1}{|W_{k}|^{2}} \|\Delta^{s+1}\Fp\cdot W_{k}\|_{L^{2}}^{2}\nonumber\\
\leq&\frac{1}{2}\sum_{k=1}^{3}\|\Delta^{s+1}\Fp\cdot V_{k}\|_{L^{2}}^{2}+C\sum^3_{i,j=1}\|\nn_i\cdot\Delta^{s+1}\nn_j\|^2_{L^2}\nonumber\\
\leq&\frac{1}{2}\sum_{k=1}^{3}\|\Delta^{s+1}\Fp\cdot V_{k}\|_{L^{2}}^{2}+CE_{s}(\Fp,\vv)D_{s}(\Fp,\vv).
\end{align}
Consequently, by means of (\ref{C}) and the definitions of $\CH^{\Delta^s}_k$ and $\ML_k(k=1,2,3)$, we derive that
\begin{align}\label{CH-k-Delta-s}
\sum_{k=1}^{3}\frac{1}{\chi_{k}}\|\CH_{k}^{\Delta^{s}}\|_{L^{2}}^{2}\geq&\sum_{k=1}^{3}\frac{1}{2\chi_{k}}\|\Delta^{s}(\ML_k\CF_{B{i}})\|_{L^{2}}^{2}-C\sum_{k=1}^{3}E^{\frac{k}{2}}_{s}(\Fp,\vv)D_{s}(\Fp,\vv)\nonumber\\
=&\sum^3_{k=1}\frac{1}{2\chi_k}\Big\|\Delta^s\Big(V_k\cdot\frac{\delta\CF_{Bi}}{\delta\Fp}\Big)\Big\|^2_{L^2}-C\sum_{k=1}^{3}E^{\frac{k}{2}}_{s}(\Fp,\vv)D_{s}(\Fp,\vv)\nonumber\\
=&\sum_{k=1}^{3}\frac{1}{2\chi_{k}}\Big\|\Delta^{s}\Big(V_{k}\cdot\Big(\nabla\cdot\frac{\partial f
_{B{i}}}{\partial(\nabla\Fp)}\Big)\Big)-\Delta^{s}\Big(V_{k}\cdot\frac{\partial f
_{B{i}}}{\partial\Fp}\Big)\Big\|_{L^{2}}^{2}\nonumber\\
&-C\sum_{k=1}^{3}E^{\frac{k}{2}}_{s}(\Fp,\vv)D_{s}(\Fp,\vv)\nonumber\\
\geq&\sum_{k=1}^{3}\frac{1}{4\chi_{k}}\Big\|\Delta^{s}\Big(V_{k}\cdot\Big(\nabla\cdot\frac{\partial f
_{B{i}}}{\partial(\nabla\Fp)}\Big)\Big)\Big\|_{L^{2}}^{2}-\sum_{k=1}^{3}\frac{2}{\chi_{k}}\Big\|\Delta^{s}\Big(V_{k}\cdot\frac{\partial f
_{B{i}}}{\partial\Fp}\Big)\Big\|_{L^{2}}^{2}\nonumber\\
&-C\sum_{k=1}^{3}E^{\frac{k}{2}}_{s}(\Fp,\vv)D_{s}(\Fp,\vv).
\end{align}
As for the second term on the right side of the above inequality, by the definitions of $V_k(k=1,2,3)$ and (\ref{lower-order-pf}), together with Lemma \ref{pde}, we infer that
\begin{align*}
&\sum^3_{k=1}\frac{2}{\chi_k}\Big\|\Delta^{s}\Big(V_{k}\cdot\frac{\partial f
_{B{i}}}{\partial\Fp}\Big)\Big\|_{L^{2}}
\leq C\sum^3_{i,j=1;i\neq j}\Big\|\Delta^{s}\Big(\nn_{i}\cdot\frac{\partial f_{B{i}}}{\partial\nn_{j}}\Big)\Big\|_{L^{2}}\\
&\quad\leq C\sum^3_{i,j=1;i\neq j}\Big(\Big\|\frac{\partial f_{B{i}}}{\partial\nn_{j}}\cdot\Delta^{s}\nn_{i}\Big\|_{L^{2}}+\Big\|\nn_{i}\cdot\Delta^s\frac{\partial f_{B{i}}}{\partial\nn_{j}}\Big\|_{L^{2}}+\Big\|\sum_{\substack{\alpha +\beta=2\\\alpha,\beta \neq0}}^{2s}\CD^{\alpha}\nn_{i}\cdot\CD^{\beta}\frac{\partial f_{B{i}}}{\partial\nn_{j}}\Big\|_{L^{2}}\Big)\\
&\quad\leq C\Big(\|\nabla\Fp\|_{L^{\infty}}^{2}\|\Delta^{s}\Fp\|_{L^{2}}+\|\nabla\Fp\|_{L^{\infty}}\|\Delta^{s}\nabla\Fp\|_{L^{2}}
+\Big\|\nabla^{2s-2}\Big(\nabla\Fp\cdot\nabla\frac{\partial f_{B{i}}}{\partial\nn_{j}}\Big)\Big\|_{{L}^{2}}\Big)\\
&\quad\leq C\Big(\|\nabla\Fp\|_{L^{\infty}}^{2}\|\Delta^{s}\Fp\|_{L^{2}}+\|\nabla\Fp\|_{L^{\infty}}\|\Delta^{s}\nabla\Fp\|_{L^{2}}+\|\nabla\Fp\|_{L^{\infty}}\Big\|\nabla^{2s-1}\frac{\partial f_{B{i}}}{\partial\nn_{j}}\Big\|_{L^{2}}\\
&\qquad+\Big\|\nabla\frac{\partial f_{B{i}}}{\partial\nn_{j}}\Big\|_{L^{\infty}}\|\nabla^{2s-1}\Fp\|_{L^{2}}\Big)\\
&\quad\leq C\sum^3_{k=1}E^{\frac{k}{2}}_{s}(\Fp,\vv)D^{\frac{1}{2}}_{s}(\Fp,\vv).
\end{align*}
Noticing that from (\ref{hh-i3-definition}) we have
\begin{align}\label{higher-order-terms}
 \nabla\cdot\frac{\partial f
_{B_{i}}}{\partial(\nabla\nn_i)}=\gamma_i\Delta\nn_i+k_i\nabla{\rm div}\nn_i-\sum^3_{j=1}k_{ji}\nabla\times(\nabla\times\nn_i\cdot(\nn_j\otimes\nn_j)),\quad i=1,2,3.   
\end{align}
Similar to the above derivation, from (\ref{higher-order-terms}) and Lemma \ref{pde} one also has
\begin{align*}
&\Big\|\Delta^{s}\Big(V_{k}\cdot\nabla\cdot\frac{\partial f
_{B{i}}}{\partial(\nabla\Fp)}\Big)\Big\|_{L^{2}}^{2}\\
&\quad\geq \frac{1}{2}\Big\|V_{k}\cdot\Big(\Delta^{s}\nabla\cdot\frac{\partial f
_{B{i}}}{\partial(\nabla\Fp)}\Big)\Big\|_{L^{2}}^{2}-\Big\|[\Delta^s,V_k\cdot]\Big(\nabla\cdot\frac{\partial f_{Bi}}{\partial(\nabla\Fp)}\Big)\Big\|^2_{L^2}\\
&\quad\geq \frac{1}{2}\Big\|V_{k}\cdot\Big(\Delta^{s}\nabla\cdot\frac{\partial f
_{B{i}}}{\partial(\nabla\Fp)}\Big)\Big\|_{L^{2}}^{2}-C\Big(E_{s}(\Fp,\vv)+E^{\frac{3}{2}}_{s}(\Fp,\vv)+E^2_{s}(\Fp,\vv)\Big)D_{s}(\Fp,\vv).
\end{align*}
Hence, by means of (\ref{CH-k-Delta-s}) and the above estimates, we derive that
\begin{align}\label{CH-s-chi-k}
\sum_{k=1}^{3}\frac{1}{\chi_{k}}\|\CH_{k}^{\Delta^{s}}\|_{L^{2}}^{2}\geq&\sum_{k=1}^{3}\frac{1}{4\chi_{k}}\Big\|V_{k}\cdot\Big(\Delta^{s}\nabla\cdot\frac{\partial f
_{B{i}}}{\partial(\nabla\Fp)}\Big)\Big\|_{L^{2}}^{2}-C\sum_{k=1}^{4}E^{\frac{k}{2}}_{s}(\Fp,\vv)D_{s}(\Fp,\vv)\nonumber\\
\geq&\frac{\gamma}{4\chi}\int_{\mathbb{R}^d}\sum^3_{k=1}\Big[\Big(\Delta^s\nabla\cdot\frac{\partial f_{Bi}}{\partial(\nabla\Fp)}-\gamma\Delta^{s+1}\Fp\Big)\cdot V_k\Big](\Delta^{s+1}\Fp\cdot V_k)\ud\xx\nonumber\\
&+\frac{\gamma^2}{8\chi}\int_{\mathbb{R}^d}\sum^3_{k=1}(\Delta^{s+1}\Fp\cdot V_k)^2\ud \xx-C\sum_{k=1}^{5}E^{\frac{k}{2}}_{s}(\Fp,\vv)D_{s}(\Fp,\vv).
\end{align}

To deal with the first term on the right side of the last inequality in (\ref{CH-s-chi-k}), we prepare some necessary estimates.
Using Lemma \ref{pde} and integrating by parts, one can give 
\begin{align}\label{higher-term-1}
&-\langle\Delta^{s}\nabla\times(\nabla\times\nn_{i}\cdot(\nn_{j}\otimes\nn_j)),\Delta^{s+1}\nn_{i}\rangle\nonumber\\
&\quad=-\langle\Delta^{s}(\nabla\times\nn_{i}\cdot(\nn_{j}\otimes\nn_j)),\Delta^{s+1}(\nabla\times\nn_{i})\rangle\nonumber\\
&\quad=\langle\nabla\Delta^{s}(\nabla\times\nn_{i}\cdot(\nn_{j}\otimes\nn_j)),\nabla\Delta^{s}(\nabla\times\nn_{i})\rangle\nonumber\\
&\quad=\langle\nabla\Delta^{s}(\nabla\times\nn_{i})\cdot(\nn_{j}\otimes\nn_j),\nabla\Delta^{s}(\nabla\times\nn_{i})\rangle\nonumber\\
&\qquad+\langle[\nabla\Delta^{s},(\nn_{j}\otimes\nn_j)\cdot](\nabla\times\nn_{i}),\nabla\Delta^{s}(\nabla\times\nn_{i})\rangle\nonumber\\
&\quad\geq \|\nabla\Delta^{s}(\nabla\times\nn_{i})\cdot\nn_{j}\|_{L^{2}}^{2}-\|[\nabla\Delta^{s},(\nn_{j}\otimes\nn_j)\cdot](\nabla\times\nn_{i})\|_{L^{2}}\|\nabla\Delta^s(\nabla\times\nn_i)\|_{L^2}\nonumber\\
&\quad\geq\|\nabla\Delta^{s}(\nabla\times\nn_{i})\cdot\nn_{j}\|_{L^{2}}^{2}-C\big(E^{\frac{1}{2}}_{s}(\Fp,\vv)+E_{s}(\Fp,\vv)\big)D_{s}(\Fp,\vv),
\end{align}
where we have used the following estimate:
\begin{align*}
    &\|[\nabla\Delta^{s},(\nn_{j}\otimes\nn_j)\cdot](\nabla\times\nn_{i})\|_{L^{2}}\\
    &\quad\leq C\Big(\|\nabla^{2s+1}(\nn_{j}^{2})\|_{L^{2}}\|\nabla\nn_{i}\|_{L^{\infty}}+\|\nabla^{2}(\nn_{j}^{2})\|_{L^{\infty}}\|\nabla^{2s}\nn_{i}\|_{L^{2}}\Big)\\
    &\quad\leq C\Big(\|\nabla\Fp\|_{L^{\infty}}\|\nabla^{2s+1}\Fp\|_{L^{2}}+(\|\nabla^{2}\Fp\|_{L^{\infty}}+\|\nabla{\Fp}\|^{2}_{L^{\infty}})\|\nabla^{2s}\Fp\|_{L^{2}}\Big)\\
    &\quad\leq C\big(E^{\frac{1}{2}}_{s}(\Fp,\vv)+E_{s}(\Fp,\vv)\big)D^{\frac{1}{2}}_{s}(\Fp,\vv).
\end{align*}
Again, applying Lemma \ref{pde} and integrating by parts, we obtain
\begin{align}\label{higher-term-2}
&-\big\langle\Delta^{s}\big((\nn_{i}\cdot\nabla\times\nn_{j})(\nabla\times\nn_{j})\big),\Delta^{s+1}\nn_{i}\big\rangle\nonumber\\
&\quad=-\big\langle\Delta^{s}(\nn_{i}\cdot\nabla\times\nn_{j})(\nabla\times\nn_{j}),\Delta^{s+1}\nn_{i}\big\rangle-\big\langle[\Delta^{s},\nabla\times\nn_{j}](\nn_{i}\cdot\nabla\times\nn_{j}),\Delta^{s+1}\nn_{i}\big\rangle\nonumber\\
&\quad=\big\langle\nabla\Delta^{s}(\nn_{i}\cdot\nabla\times\nn_{j})(\nabla\times\nn_{j}),\nabla\Delta^{s}\nn_{i}\big\rangle+\big\langle\Delta^{s}(\nn_{i}\cdot\nabla\times\nn_{j})\nabla(\nabla\times\nn_{j}),\nabla\Delta^{s}\nn_{i}\big\rangle\nonumber\\
&\qquad-\big\langle[\Delta^{s},\nabla\times\nn_{j}](\nn_{i}\cdot\nabla\times\nn_{j}),\Delta^{s+1}\nn_{i}\big\rangle\nonumber\\
&\quad=\big\langle\nabla\Delta^{s}\nn_{i}\cdot(\nabla\times\nn_{j}),\nabla\Delta^{s}\nn_{i}\cdot(\nabla\times\nn_{j})\big\rangle+\big\langle[\nabla\Delta^{s},\nabla\times\nn_{j}](\nn_{i}\cdot\nabla\times\nn_{j}),\nabla\Delta^{s}\nn_{i}\big\rangle\nonumber\\
&\qquad+\big\langle\Delta^{s}(\nn_{i}\cdot\nabla\times\nn_{j})\nabla(\nabla\times\nn_{j}),\nabla\Delta^{s}\nn_{i}\big\rangle-\big\langle[\Delta^{s},\nabla\times\nn_{j}](\nn_{i}\cdot\nabla\times\nn_{j}),\Delta^{s+1}\nn_{i}\big\rangle\nonumber\\
&\quad\geq\|\nabla\Delta^{s}\nn_{i}\cdot(\nabla\times\nn_{j})\|_{L^{2}}^{2}-C\big(E^{\frac{1}{2}}_{s}(\Fp,\vv)+E_{s}(\Fp,\vv)\big)D_{s}(\Fp,\vv),
\end{align}
where we have used the following estimates:
\begin{align*}
    &\|[\nabla\Delta^{s},\nabla\times\nn_{j}](\nn_{i}\cdot\nabla\times\nn_{j})\|_{L^{2}}\\
    &\quad\leq C\Big(\|\nabla^{2s+2}\nn_{j}\|_{L^{2}}\|\nn_{i}\cdot\nabla\times\nn_{j}\|_{L^{\infty}}+\|\nabla^{2}\nn_{j}\|_{L^{\infty}}\|\nabla^{2s+1}\nn_{j}\|_{L^{2}}\Big)\\
    &\quad\leq CE^{\frac{1}{2}}_{s}(\Fp,\vv)D^{\frac{1}{2}}_{s}(\Fp,\vv),\\
    &\|\Delta^{s}(\nn_{i}\cdot\nabla\times\nn_{j})\nabla(\nabla\times\nn_{j})\|_{L^{2}}\\
    &\quad\leq C\|\nabla^{2}\Fp\|_{L^{\infty}}\Big(\|\nn_{i}\|_{L^{\infty}}\|\nabla^{2s+1}\nn_{j}\|_{L^{2}}+\|\nabla\nn_{j}\|_{L^{\infty}}\|\nabla^{2s}\nn_{i}\|_{L^{2}}\Big)\\
    &\quad\leq C\big(E^{\frac{1}{2}}_{s}(\vv,\Fp)+E_{s}(\Fp,\vv)\big)D^{\frac{1}{2}}_{s}(\Fp,\vv),\\
    &\|[\Delta^{s},\nabla\times\nn_{j}](\nn_{i}\cdot\nabla\times\nn_{j})\|_{L^{2}}\\
    &\quad\leq C\Big(\|\nabla^{2s+1}\nn_{j}\|_{L^{2}}\|\nn_{i}\cdot\nabla\times\nn_{j}\|_{L^{\infty}}+\|\nabla^{2}\nn_{j}\|_{L^{\infty}}\|\nabla^{2s-1}(\nn_{i}\cdot\nabla\times\nn_{j})\|_{L^{2}}\Big)\\
    &\quad\leq C\Big(\|\nabla\Fp\|_{L^{\infty}}\|\nabla^{2s+1}\Fp\|_{L^{2}}+\|\nabla^{2}\Fp\|_{L^{\infty}}\big(\|\nabla^{2s}\nn_{j}\|_{L^{2}}+\|\nabla\nn_{j}\|_{L^{\infty}}\|\nabla^{2s-1}\nn_{i}\|_{L^{2}}\big)\Big)\\
    &\quad\leq C\big(E^{\frac{1}{2}}_{s}(\Fp,\vv)+E_{s}(\Fp,\vv)\big)D^{\frac{1}{2}}_{s}(\Fp,\vv).
\end{align*}
A direct calculation leads to
\begin{align}\label{higher-term-3}
\langle\Delta^{s}\nabla\text{div}\nn_{i},\Delta^{s+1}\nn_{i}\rangle=-\langle\Delta^{s}\text{div}\nn_{i},\Delta^{s+1}\text{div}\nn_{i}\rangle
=\|\nabla\Delta^{s}\text{div}\nn_{i}\|^2_{L^2}.
\end{align}
Then, combining (\ref{higher-order-terms}) with (\ref{higher-term-1})--(\ref{higher-term-3}), we infer that
\begin{align}\label{Delta-s-gamma-s+1}
&\int_{\mathbb{R}^{d}}\Big(\Delta^{s}\nabla\cdot\frac{\partial f_{B{i}}}{\partial(\nabla\Fp)}-\gamma\Delta^{s+1}\Fp\Big)\cdot\Delta^{s+1}\Fp\ud\xx\nonumber\\
&\quad\geq\sum_{k=1}^{3}\int_{\mathbb{R}^{d}}\big(k_{i}|\nabla\Delta^{s}\text{div}\nn_{i}|^{2}+\sum_{j=1}^{3}k_{ji}|\Delta^{s}\nabla\times(\nabla\nn_{i}\cdot\nn_{j})|^{2}\big)\ud\xx\nonumber\\
&\qquad-C\big(E_{s}(\Fp,\vv)^{\frac{1}{2}}+E_{s}(\Fp,\vv)\big)D_{s}(\Fp,\vv).
\end{align}
Consequently, plugging (\ref{Delta-s+1-Fp}) and (\ref{Delta-s-gamma-s+1}) into (\ref{CH-s-chi-k}) yields
\begin{align*}
\sum_{k=1}^{3}\frac{1}{\chi_{k}}\|\CH_{k}^{\Delta^{s}}\|_{L^{2}}^{2}
\geq&\sum_{k=1}^{3}\int_{\mathbb{R}^{d}}\big(k_{i}|\nabla\Delta^{s}\text{div}\nn_{i}|^{2}+\sum_{j=1}^{3}k_{ji}|\Delta^{s}\nabla\times(\nabla\nn_{i}\cdot\nn{j})|^{2}\big)\ud\xx\\
&+\frac{\gamma^{2}}{4\chi}\|\Delta^{s+1}\Fp\|_{L^{2}}^{2}-C\sum_{k=1}^{5}E^{\frac{k}{2}}_{s}(\Fp,\vv)D_{s}(\Fp,\vv)\\
\geq&\frac{\gamma^{2}}{4\chi}\|\Delta^{s+1}\Fp\|_{L^{2}}^{2}-C\sum_{k=1}^{5}E^{\frac{k}{2}}_{s}(\Fp,\vv)D_{s}(\Fp,\vv).  
\end{align*}
Lemma \ref{lfs} is thus proved.

\bigskip

\end{document}